\font\tenmath=msbm10 \font\sevenmath=msbm7 \font\fivemath=msbm5
\def \\ { \cr }
\newcommand{\RR}{{\mathbb R}}
\newcommand{\CC}{{\mathbb C}}
\newcommand{\NN}{{\mathbb N}}
\newcommand{\ZZ}{\mathbb Z}
\providecommand{\abs}[1]{\left\lvert {#1} \right\rvert}
\def\H{{\mathcal H}}
\def\P{{\mathcal P}}
\def\T{{\mathcal T}}
\newcommand\la{\langle}
\newcommand\ra{\rangle}
\newcommand{\rb}{\mathbf{b}}
\newcommand{\rB}{\mathbf{B}}
\newcommand{\set}[1]{\left\{{#1}\right\}}
\newcommand{\implica}{\Longrightarrow}
\newcommand{\talque}{;}
\newcommand{\y}{ \; \textrm{and} \; }
\newcommand{\ds}{\displaystyle}
\newcommand{\econd}[2]{\mathbb{E}({#1}|{#2})}
\newcommand{\torref}[1]{\tau_{#1}}
\newcommand{\sufijofb}[1]{\overline{s}_{#1}}                    
\newcommand{\sufijofbb}[2]{\overline{s}_{{#1},{#2}}}            
\newcommand{\sufijocb}[3]{\overline{S}_{{#1}}({#2},{#3})}       
\newcommand{\sufijocbb}[4]{\overline{S}_{{#1},{#2}}({#3},{#4})} 
\newcommand{\suma}[4]{\sigma_{{#1},{#2}}({#3},{#4})}
\newcommand{\sumacoef}[5]{{\bm{\sigma}}_{{#1},{#2}}^{({#5})}({#3},{#4})}
\newcommand{\indice}[5]{{#1}_{{#2},{#3}}({#4},{#5})}
\newcommand{\matrizp}[4]{P_{{#1},{#2}}({#3},{#4})}               
\newcommand{\ld}{\lambda}
\numberwithin{equation}{section}
\newtheorem{theo}{Theorem}
\newtheorem{coro}[theo]{Corollary}
\newtheorem{lemm}[theo]{Lemma}
\theoremstyle{remark}
\begin{document}

\title
{Eigenvalues of Toeplitz minimal systems of finite topological rank}

\author{Fabien Durand}
\address{Laboratoire Ami\'enois
de Math\'ematiques Fondamentales et Appliqu\'ees, CNRS-UMR 7352, Universit\'{e} de Picardie Jules Verne, 33 rue Saint Leu, 80039   Amiens cedex 1,
France.} \email{fabien.durand@u-picardie.fr}

\author{Alexander Frank}
\address{Departamento de Ingenier\'{\i}a
Matem\'atica, Universidad de Chile, Avenida Blanco Encalada 2120, Santiago, Chile.}
\email{afrank@dim.uchile.cl}

\author{Alejandro Maass}
\address{Departamento de Ingenier\'{\i}a
Matem\'atica and Centro de Modelamiento Ma\-te\-m\'a\-ti\-co, CNRS-UMI 2807, Universidad de Chile, Avenida Blanco Encalada 2120, Santiago,
Chile.}\email{amaass@dim.uchile.cl}

\subjclass{Primary: 54H20; Secondary: 37B20} \keywords{Toeplitz systems, finite rank Bratteli-Vershik dynamical systems, eigenvalues}

\thanks{This research was partially supported by grants Basal-CMM \& Fondap 15090007, Proyecto Anillo ACT1103, ANR Subtile and the cooperation project Mathamsud DYSTIL. The third author thanks Chaire B\'ezout at University of Paris Est and U. Picardie Jules Verne where this research was finished. \\
We are very grateful to the anonymous referee who helped significantly to improve the presentation of this article.}

\begin{abstract}
In this article we characterize measure theoretical eigenvalues of Toeplitz Bratteli-Vershik minimal systems of finite topological rank which are not associated to a continuous eigenfunction.
Several examples are provided to illustrate the different situations that can occur.
\end{abstract}

\date{March 16, 2014}

\maketitle

\markboth{Fabien Durand, Alexander Frank, Alejandro Maass}{Eigenvalues of finite rank Toeplitz systems}

\parindent=0cm 

\section{Introduction}

Seminal results by M. Dekking \cite{Dekking} and B. Host \cite{Host} state that eigenvalues of primitive substitution dynamical systems are always associated to continuous eigenfunctions. Thus the topological and measure theoretical Kronecker factors coincide. It is natural to ask whether this phenomenon is still true for other classes of minimal Cantor systems. Most of the answers we have are negative. 

Substitution dynamical systems correspond to expansive minimal Cantor systems having a periodic or stationary Bratteli-Vershik representation \cite{dhs}. A natural class to explore extending the former one are linearly recurrent minimal Cantor systems, which correspond to those systems having a Bratteli-Vershik representation with a bounded number of incidence matrices. In \cite{lr} and \cite{necesariasuficiente} necessary and sufficient conditions based only on the combinatorial structure of the Bratteli diagrams are given for this class of systems, allowing  to differentiate continuous and measure theoretical but non continuous eigenvalues. The more general class of topological finite rank minimal Cantor systems is explored in \cite{rangofinito}, providing new examples and conditions to differentiate the topological and measure theoretical Kronecker factors.

It is known that any countable subgroup of the torus $\mathbb{S}^1=\{z \in \CC \ ; |z|=1 \}$ containing infinitely many rationals can be the set of eigenvalues of a Toeplitz system \cite{iwanik,toeplitzpreset}. Nevertheless, in the class of finite rank systems, Toeplitz systems exhibit a completely different behavior. Indeed, if a Toeplitz system is linearly recurrent then all its eigenvalues are associated to continuous eigenfunctions and if it has finite topological rank  just a few extra non continuous eigenvalues can appear and they are rational \cite{rangofinito}. 
So the assumption of finite topological rank restricts the possibilities of non continuous eigenvalues to some particular ones. The purpose of this work is to study the nature of these particular non continuous eigenvalues of finite rank Toeplitz systems. 
\medskip

Our main result (Theorem \ref{theo:principal}) states a necessary and sufficient condition for $\lambda=\exp{(2i\pi a/b)}$,
where $a,b$ are integers with $(a,b)=1$, to be a non continuous eigenvalue of a finite topological rank Toeplitz system. This condition shows that non continuous eigenvalues are very rare and impose particular local orders to the associated Bratteli-Vershik representations. In addition, even if this condition looks abstract, it is easily computable and allows to produce concrete examples, showing particular behaviors of the group of eigenvalues in relation to the set of ergodic measures.  
\medskip

The article is organized as follows. Section 2 contains the main definitions concerning eigenvalues of dynamical systems and Bratteli-Vershik representations, in particular the concept of Toeplitz minimal Cantor system of finite topological rank. In Section 3 we give the main result of the article and its corollaries. In particular, we exhibit a relation between the number of ergodic measures and the number of non continuous eigenvalues in the class of Toeplitz minimal Cantor systems of finite topological rank. Main technical lemmas used in the proofs are given in Section 4 and the proofs of the main result and its corollaries in Section 5. Finally, in Section 6 we provide several examples to illustrate the main result, its consequences and the fact that our condition is computable.

\section{Basic definitions}
\subsection{Dynamical systems and eigenvalues}
A \emph{topological dynamical system}, or just dynamical system, is a compact Hausdorff space $X$ together with a homeomorphism 
$T:X\rightarrow X$. We use the notation $\left( X,T\right)$. If $X$ is a Cantor set 
(i.e., $X$ has a countable basis of closed and open sets and it has no isolated points) we say that the system is Cantor. A dynamical system is \emph{minimal} if all orbits are dense in $X$,
or equivalently the only non empty closed invariant set is $X$.

A complex number $\lambda$ is a {\it continuous eigenvalue} of $(X,T)$ if there exists a continuous function $f : X\to \CC$, $f\not = 0$, such
that $f\circ T = \lambda f$; $f$ is called a {\it continuous eigenfunction} (associated to $\lambda$). Let $\mu$ be a $T$-invariant probability measure, i.e., $T\mu = \mu$, defined on the Borel $\sigma$-algebra of $X$. A complex number $\lambda$ is an {\it eigenvalue} of the
dynamical system $(X,T)$ with respect to $\mu$ if there exists $f\in L^2 (X,\mu)$, $f\not = 0$, such that $f\circ T = \lambda f$; $f$ is
called an {\it eigenfunction} (associated to $\lambda$). If $\mu$ is ergodic, then every eigenvalue has modulus 1 and every eigenfunction has a constant modulus $\mu$-almost surely. Of course, continuous eigenvalues are eigenvalues.

\subsection{Bratteli-Vershik representations}
Let $(X,T)$ be a minimal Cantor system. It can be represented by an ordered Bratteli diagram together with the Vershik transformation acting on it. For details on this theory see \cite{hps} or \cite{review}. This couple is called a Bratteli-Vershik representation of the system. We give a brief outline of this construction emphasizing the notations in this paper.

\subsubsection{Bratteli diagrams}
A Bratteli diagram is an infinite graph $\left( V,E\right)$ which consists of a vertex set $V$ 
and an edge set $E$, both of
which are divided into levels $V=V_{0}\cup V_{1}\cup \ldots$, $E=E_{1}\cup E_{2}\cup \ldots$ 
and all levels are pairwise disjoint. 
The set $V_{0}$ is a singleton $\{v_{0}\}$ and for all $n\geq 1$ edges in $E_{n}$ join vertices in $V_{n-1}$ to
vertices in $V_{n}$. It is also required that every vertex in $V_{n}$ is the ``end-point'' of some edge in $E_{n}$ for $n\geq 1$ and an ``initial-point'' of some edge in $E_{n+1}$ for $n\geq 0$. We set $\#V_{n}=d_{n}$ for all $n\geq 1$. 
\smallskip

Fix $n\geq 1$. We call \emph{level} $n$ of the diagram to the subgraph consisting of the vertices in $V_{n-1}\cup V_{n}$ 
and the edges $E_{n}$ between these vertices. Level $1$ is called the \emph{hat} of the Bratteli diagram. 
We describe the edge set $E_n$ using a $V_{n-1}\times V_{n}$ incidence matrix $M_{n}$ for which 
its $(t_{1},t_{2})$ entry is the number of edges in $E_{n}$ joining vertex $t_{1}\in V_{n-1}$ 
with vertex $t_{2} \in V_{n}$. We also set $P_{n}=M_{2}\cdots M_{n}$ with the convention that $P_{1}=I$, where $I$ denotes the identity matrix. The number of paths joining $v_{0} \in V_{0}$ and a vertex $t\in V_{n}$ is given by coordinate $t$ of the \emph{height row vector} $h_{n}=\left(h_n(t) ; t \in V_{n} \right) \in \NN ^{d_{n}}$. Notice that $h_{1}=M_{1}$ and $h_{n}=h_{1}P_{n}$. 
\smallskip

We also consider several levels at the same time. 
For integers $0 \leq m< n$ we denote by $E_{m,n}$ the set of all paths in the graph joining vertices of  
$V_{m}$ with vertices of $V_{n}$. We define matrices $P_{m,n}=M_{m+1} \cdots M_{n}$ with the convention that $P_{n,n}=I$
for $1\leq m \leq n$. Clearly, coordinate $\matrizp{m}{n}{t_{1}}{t_{2}}$ of matrix $P_{m,n}$ is the number of paths in $E_{m,n}$ from vertex $t_{1} \in V_{m}$ to vertex $t_{2} \in V_{n}$. 
It can be verified that $h_{n}=h_{m}P_{m,n}$. 
\smallskip

We need to notice that the incidence matrices defined above correspond to the transpose of the matrices defined at the classical reference in this theory \cite{hps}. This choice is done to simplify the understanding and reading of the article.  

\subsubsection{Ordered Bratteli diagrams and Bratteli-Vershik representations}
An \emph{ordered} Bratteli diagram is a triple \( B=\left( V,E,\preceq \right) \), where \( \left( V,E\right)  \) is a Bratteli diagram and \( \preceq  \) is a partial ordering on \( E \) such that: edges
\( e \) and \( e' \) are comparable if and only if they have the same end-point.
This partial ordering naturally defines maximal and minimal edges and paths. Also, 
the partial ordering of $E$ induces another one on paths of $E_{m,n}$, where $0 \leq m < n$:  
$\left(e_{m+1},\ldots,e_{n}\right) \preceq \left(f_{m+1},\ldots ,f_{n}\right)$ if and only if 
there is $m+1\leq i\leq n$ such that $e_{j}=f_{j}$ for $i<j\leq n$ and $e_{i}\preceq f_{i}$.

Given a strictly increasing sequence of integers 
$\left(n_{k}\right)_{k\geq 0}$ with $n_{0}=0$ one defines the \emph{contraction} or \emph{telescoping} of
$B=\left(V,E,\preceq \right)$ with respect to $\left(n_{k} \right)_{k\geq 0}$ as 
$$\left(\left(V_{n_{k}}\right)_{k\geq
0},\left( E_{n_{k},n_{k+1}}\right)_{k\geq 0},\preceq \right), $$ where $\preceq$ is the order induced in each set of edges 
$E_{n_{k},n_{k+1}}$. The converse operation is called {\it microscoping} (see \cite{hps} for more details).
\smallskip

Given an ordered Bratteli diagram \( B=\left( V,E,\preceq \right) \) one defines \( X_{B} \) as the set of infinite paths \( \left(
x_{1},x_{2},\ldots \right)  \) starting in \( v_{0} \) such that for all \( n\geq 1 \) the end-point of \( x_{n}\in E_{n} \) is the
initial-point of \( x_{n+1}\in E_{n+1} \). We topologize \( X_{B} \) by postulating a basis of open sets, namely the family of \emph{cylinder sets}
$$
\left[ e_{1},e_{2},\ldots ,e_{n}\right]
=\left\{ \left( x_{1},x_{2},\ldots \right) \in X_{B} \textrm{ } ; \textrm{ }
x_{i}=e_{i},\textrm{ for }1\leq i\leq n\textrm{ }
\right\} .$$

Each \( \left[ e_{1},e_{2},\ldots ,e_{n}\right]  \) is also closed, as is
easily seen, and so \( X_{B} \) is a compact, totally disconnected metrizable space.

When there is a unique \( \left( x_{1},x_{2},\ldots \right) \in X_{B} \) such that \( x_{n} \) is (locally) maximal for any \( n\geq 1 \) and a unique
\( \left( y_{1},y_{2},\ldots \right) \in X_{B} \) such that \( y_{n} \) is (locally) minimal for any \(n \geq 1 \), one says that \( B=\left(
V,E,\preceq \right)  \) is a \emph{properly ordered} Bratteli diagram. Call these particular points \( x_{\mathrm{max}} \) and \(
x_{\mathrm{min}} \) respectively. In this case one defines the dynamic \( V_{B} \) over \( X_{B} \) called the \emph{Vershik map}. Let \( x=\left( x_{1},x_{2},\ldots \right) \in X_{B}\setminus \left\{ x_{\mathrm{max}}\right\}  \) and let \(
n\geq 1 \) be the smallest integer so that \( x_{n} \) is not a maximal edge. Let \( y_{n} \) be the successor of \( x_{n} \) for the local order and \( \left(
y_{1},\ldots ,y_{n-1}\right)  \) be the unique minimal path in \( E_{0,n-1} \) connecting \( v_{0} \) with the initial vertex of \( y_{n} \). One
sets \( V_{B}\left( x\right) =\left( y_{1},\ldots ,y_{n-1},y_{n},x_{n+1},\ldots \right)  \) and \( V_{B}\left( x_{\mathrm{max}}\right)
=x_{\mathrm{min}} \).

The dynamical system \( \left( X_{B},V_{B}\right)  \) is minimal. It is called the \emph{Bratteli-Vershik system} generated by \( B=\left( V,E,\preceq \right)
\). The dynamical system induced by any telescoping of \( B \) is topologically conjugate to \( \left( X_{B},V_{B}\right)  \). In \cite{hps} it
is proved that any minimal Cantor system \( \left( X,T\right)  \) is topologically conjugate to a Bratteli-Vershik system \( \left(
X_{B},V_{B}\right)  \). One says that \( \left( X_{B},V_{B}\right) \) is a \emph{Bratteli-Vershik representation} of \( \left( X,T\right)  \).
In what follows we identify $(X,T)$ with any of its Bratteli-Vershik representations.

\subsubsection{Minimal Cantor systems of finite topological rank}

A minimal Cantor system is of finite (topological)  rank if it admits a  Bratteli-Vershik representation such that the number of vertices per level is uniformly bounded by some integer $d$. The minimum possible value of $d$ is called the \emph{topological rank} of the system. We observe that topological and measure theoretical finite rank  notions are completely different. For instance, systems of topological rank one correspond to odometers, whereas in the measure theoretical sense there are rank one systems that are expansive as classical Chacon's example. 

To have a better understanding of the dynamics of a minimal Cantor system, and in particular 
to understand its group of eigenvalues, one needs to work with a ``good'' Bratteli-Vershik representation. 
In the context of minimal Cantor systems of finite rank $d$ we will consider representations verifying:
\smallskip

(H1) The entries of $h_{1}$ are all equal to $1$.

(H2) For every $n\geq 2$, $M_{n}>0$.

(H3) For every $n\geq 2$, $d_{n}$ is equal to $d$.

(H4) For every $n\geq 2$, all maximal edges of $E_n$ start in the same vertex of $V_{n-1}$. 
\medskip

A Bratteli-Vershik representation of a  minimal Cantor system $(X,T)$ verifying (H1), (H2), (H3) and (H4) will be called \emph{proper}. In this case, to simplify notations and avoid the excessive use of indexes, we will identify $V_{n}$ with $\{1,\ldots, d\}$ for all $n\geq 1$. The level $n$ will be clear from the context.

It is not difficult to prove that a  minimal Cantor system of finite rank $d$ has a proper representation. We give a brief outline for completeness. We start from a given Bratteli-Vershik representation that we transform by telescoping. Condition (H1) follows by splitting the first level to separate all arrows in the hat and then duplicating accordingly the arrows of the second level. By minimality there is a telescoping of the diagram such that (H2) holds \cite{hps}. Another telescoping to the levels where $\#V_{n}=d$ produces (H3). Property (H4) follows from a compactness argument and a series of telescopings: if this is not possible, then we can construct two disjoint maximal points and we get a contradiction.
\medskip

A minimal Cantor system is \emph{linearly recurrent} if it admits a proper Bratteli-Vershik representation such that the set $\{M_{n};n\geq 1\}$ is
finite. Clearly, linearly recurrent minimal Cantor systems are of finite rank (see \cite{dhs}, \cite{du1}, \cite{du2} and \cite{lr} for more details on this class of systems).

\subsubsection{Associated Kakutani-Rohlin partitions}
Let $B=\left( V,E,\preceq \right)$ be a properly ordered Bratteli diagram and $(X,T)$ the associated minimal Cantor system. This diagram defines for each $n\geq 0$ a clopen {\it Kakutani-Rohlin} partition of $X$: for $n=0$, 
$\P_{0}=\{B_{0}(v_{0})\}$, where $B_{0}(v_{0})=X$, and for $n\geq 1$    
$$
\P_{n}=\{T^{-j}B_{n}(t); t \in V_n, \ 0 \leq j < h_n(t) \} \ ,
$$
where $ B_n(t) = [e_1 , \dots ,e_n]$ and $(e_1 , \dots ,e_n)$ is the unique maximal path from $v_0$ to vertex $t \in V_{n}$. For each $t \in V_n$ the set $\{T^{-j}B_n(t); 0 \leq j < h_n(t) \}$ is called the {\it tower} $t$ of $\P_{n}$. 
It corresponds to the set of all paths from $v_0$ to $t\in V_n$ (there are exactly $h_n(t)$ of such paths). 
Denote by $\T_{n}$ the $\sigma$-algebra generated by the partition $\P_{n}$.
The map $\tau_n: X \to V_n$ is
given by $\tau_n(x)=t$ if $x$ belongs to tower $t$ of $\P_{n}$. The entrance time of 
$x$ to $B_n({\tau_n(x)})$ is given by $r_n(x)=\min\{ j\geq 0; T^jx \in B_n({\tau_n(x)}) \}$. 
\smallskip 

For each $x=(x_1,x_{2},\ldots) \in X$ and $n\geq 0$ define the row vector 
$s_n(x)\in \NN^{d_{n}}$, called the {\it suffix vector of order $n$} of $x$, by
$$
s_n(x,t)=\# \{e \in E_{n+1}; x_{n+1} \preceq e, x_{n+1}\not = e, t \hbox{ is the initial vertex of } e\}
$$
at each coordinate $t\in V_n$. 
A classical computation gives for all $n\geq 1$ (see for example \cite{necesariasuficiente})
\begin{align}
\label{eq:formulareturn} r_n(x)= s_0(x)+\sum_{i=1}^{n-1} \la s_i(x),h_{1}P_{i} \ra=s_0(x)+\sum_{i=1}^{n-1} \la s_i(x),h_{i} \ra \ ,
\end{align}
where $\la \cdot, \cdot \ra$ is the euclidean inner product. Observe that under the hypothesis (H1), i.e., $h_{1}=(1,\ldots,1)$, we have $s_0(x)=0$.

\subsubsection{Invariant measures}

Let $\mu$ be an invariant probability measure of the system $(X,T)$ associated to a properly ordered Bratteli diagram 
$B$, like in the previous subsection. 
It is determined by the values assigned to $B_n(t)$ for all $n\geq 0$ and $t \in V_n$. Define the column vector
$\mu_{n}=(\mu_n(t)\ ; \ t \in V_{n})$ with $\mu_n(t)=\mu(B_n(t))$. 
A simple computation allows to prove the following useful relation:
\begin{equation}\label{eq:measure}
\mu_{m}=P_{m,n}\mu_{n}
\end{equation}
for integers $0\leq m < n$. Also, $\mu(\tau_{n}=t)=h_{n}(t) \mu_{n}(t)$ for all $n\geq 1$ and $t\in V_{n}$.

\subsubsection{Clean Bratteli-Vershik representations}

Let $B$ be a proper ordered Bratteli diagram of finite rank $d$ and $(X,T)$ the corresponding minimal Cantor system. 
Recall that in this case we identify $V_{n}$ with $\{1,\ldots, d\}$ for all $n\geq 1$. 
Then, by Theorem 3.3 in \cite{bkms}, there exist a telescoping of the diagram (which keeps the diagram proper) 
and $\delta >0$ such that:  
\begin{enumerate}
\item For any ergodic measure $\mu$ there exists $I_{\mu} \subseteq \{1,\ldots,d\}$ verifying: 
\begin{enumerate}
\item $\mu(\tau_{n}=t) \geq \delta$ for every $t \in I_{\mu}$ and $n \geq 1$, and   
\item $\lim_{n\to \infty} \mu(\tau_{n}=t)=0$ for every $t \not \in I_{\mu}$.
\end{enumerate}
\item If $\mu$ and $\nu$ are different ergodic measures then $I_{\mu}\cap I_{\nu}=\emptyset$.  
\end{enumerate}

When an ordered Bratteli diagram verifies the previous properties we say it is \emph{clean}. 
We remark that this is a modified version of the notion of \emph{clean} Bratteli diagram given in 
\cite{rangofinito} that is inspired by the results of \cite{bkms}. This property will be very 
relevant for formulating our main result. In \cite{bkms}, systems such that $I_{\mu}=\{1,\ldots,d\}$ 
for some ergodic measure 
$\mu$ are called of \emph{exact finite rank}. Those systems are uniquely ergodic. 
\smallskip

Let $\lambda\in \mathbb{S}^{1}$ be an eigenvalue of the system $(X,T)$ associated to $B$ for an ergodic measure $\mu$. Let $f\in L^{2}(X,\mu)$ be an associated eigenfunction with $|f|=1$. 
For $n\geq 1$ define $c_n:V_n\to \RR_0^{+}$ and $\rho_n:V_n\to [0,1)$ by the relation
\begin{equation}\label{eq:def_c_y_ro}
    \frac{1}{\mu_n(t)}\int_{B_n(t)}f \, d\mu=c_n(t)\lambda^{-\rho_n(t)}, \textrm{\ \; for\ \; } t\in V_n.
\end{equation}
Notice that $0\leq c_{n}(t) \leq 1$. 

The sequence $(f_n;n\geq 1)$ of conditional expectations of $f$ with respect to the sigma algebras $(\T_{n}; n\geq 1)$ generated by the Kakutani-Rohlin partitions satisfies
$$f_n(x)=\econd{f}{\mathcal{T}_{n}}(x)=c_n(\tau_n(x))\lambda^{-r_n(x)-\rho_n(\tau_n(x))}.$$
It can be proved that   $\lambda^{-(r_n+\rho_n\circ\torref{n})}$ converges $\mu$--a.e. (for a slightly deeper discussion we refer the reader to \cite{necesariasuficiente}).
Also, rephrasing a known result from \cite{rangofinito} we have

\begin{lemm}\label{lemm:c_tiende_a_uno} 
If $B$ is a clean Bratteli diagram and $\mu$ an ergodic measure for the associated minimal Cantor system, 
then
\begin{enumerate}
\item for any $t \in \{1,\ldots , d\}$, $\lim_{n\to \infty}\mu(\torref{n}=t)(c_n(t)-1)\to 0$,  
\item for $t\in I_{\mu}$, $\lim_{n\to \infty} c_n(t)\to 1$.  
\end{enumerate}
\end{lemm}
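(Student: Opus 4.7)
The plan is to exploit the $L^2$-convergence of the conditional expectations $f_n=\econd{f}{\T_n}$ to $f$, combined with the explicit formula $f_n(x)=c_n(\tau_n(x))\lambda^{-r_n(x)-\rho_n(\tau_n(x))}$ stated just above the lemma. Since $|\lambda|=1$, this gives $|f_n(x)|=c_n(\tau_n(x))$, a function constant along each tower of the Kakutani-Rohlin partition $\P_n$.

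First I would compute $\|f_n\|_2^2$ tower by tower. Each tower $t$ of $\P_n$ consists of $h_n(t)$ atoms, each of measure $\mu_n(t)$, so
\[
\|f_n\|_2^2=\sum_{t=1}^d h_n(t)\mu_n(t)\,c_n(t)^2=\sum_{t=1}^d \mu(\tau_n=t)\,c_n(t)^2.
\]
Because the partitions $\P_n$ refine one another and have atoms whose diameters shrink to $0$, the $\sigma$-algebras $\T_n$ generate the Borel $\sigma$-algebra of $X$, so the martingale convergence theorem yields $f_n\to f$ in $L^2(\mu)$, hence $\|f_n\|_2^2\to \|f\|_2^2=1$. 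Using $\sum_t\mu(\tau_n=t)=1$ and subtracting, this produces
\[
\sum_{t=1}^{d}\mu(\tau_n=t)\bigl(1-c_n(t)^2\bigr)\xrightarrow[n\to\infty]{}0.
\]

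Since $0\le c_n(t)\le 1$, every term of the sum is non-negative and therefore each of them individually tends to $0$. Factoring $1-c_n(t)^2=(1-c_n(t))(1+c_n(t))$ with $1+c_n(t)\ge 1$ gives the pointwise comparison
\[
0\le \mu(\tau_n=t)\bigl(1-c_n(t)\bigr)\le \mu(\tau_n=t)\bigl(1-c_n(t)^2\bigr)\longrightarrow 0,
\]
which is exactly assertion (1). For assertion (2), I would invoke the cleanness of $B$: for every $t\in I_\mu$ one has $\mu(\tau_n=t)\ge \delta>0$ uniformly in $n$; dividing the previous bound by $\mu(\tau_n=t)$ forces $1-c_n(t)\to 0$, i.e.\ $c_n(t)\to 1$.

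There is no real obstacle here: the calculation is direct, and the only non-trivial input is that the increasing family $(\T_n)_{n\ge 1}$ generates the Borel $\sigma$-algebra of $X$, which is a standard feature of any Bratteli-Vershik representation (the atoms of $\P_n$ are cylinders whose diameters tend to zero). Everything else reduces to the trivial identity $\sum_t\mu(\tau_n=t)=1$ and the non-negativity of the individual terms.
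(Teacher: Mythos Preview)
Your argument is correct. The computation $\|f_n\|_2^2=\sum_t\mu(\tau_n=t)c_n(t)^2$, the martingale convergence $f_n\to f$ in $L^2(\mu)$, and the factoring $1-c_n(t)^2\ge 1-c_n(t)$ together give (1); cleanness then immediately yields (2). The only point one might state more carefully is that the atoms of $\P_n$ coincide with the cylinders $[e_1,\dots,e_n]$ ranging over all paths of length $n$, so $(\T_n)_{n\ge1}$ does generate the Borel $\sigma$-algebra; this is standard for Bratteli--Vershik systems and your sketch already indicates it.

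As for comparison: the paper does not prove this lemma at all, merely citing it as a rephrasing of a result from \cite{rangofinito}. Your self-contained $L^2$-martingale proof is exactly the kind of argument one expects underlies that citation, so there is nothing materially different to contrast.
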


\subsection{Bratteli-Vershik systems of Toeplitz type}

A properly ordered Bratteli diagram $B=(V,E,\preceq)$ is of \emph{Toeplitz type} if for all $n\geq 1$ 
the number of edges in $E_{n}$ finishing at a fixed vertex of $V_{n}$ is constant independently of the vertex. 
Denote this number by $q_n$ and set $p_n = q_1 q_{2} \cdots q_n$. Observe that $p_{n}$ is the number of paths from $v_{0}$ to any vertex of  $V_{n}$.
Thus $h_{n}(t)=p_{n}$ for any $t \in V_{n}$. We say that $(q_n;n\geq 1)$ is the \emph{characteristic sequence} of the diagram.
This class was obtained in \cite{gjtoeplitz} when characterizing Toeplitz
subshifts.  

The main object in this study are eigenvalues of minimal Cantor systems of finite rank $d$, having a proper Bratteli-Vershik representation of Toeplitz type. It is known that finite rank minimal Cantor systems are either odometers or subshifts \cite{DM}, so in our study we will be dealing only with Toeplitz subshifts or odometers.

To state our main results we will need some extra notations. Fix a minimal Cantor system $(X,T)$ with a Toeplitz type proper Bratteli-Vershik representation of rank $d$ and characteristic sequence $(q_n;n\geq 1)$.

For $0 \leq m < n$ define $q_{m,n}=q_{m+1}\cdots q_{n}$, 
the number of paths in $E_{m,n}$ finishing 
in any fixed vertex $t\in V_n$. Clearly $q_{\ell,n}=q_{\ell,m}q_{m,n}$ if $0\leq \ell < m <n$.
Also, for $x=(x_{1}, x_{2}, \ldots) \in X$ define the integer $\sufijofbb{m}{n}(x)$ as the number of paths
in $E_{m,n}$ which end at $\torref{n}(x)$ that are strictly bigger than
$(x_{m+1},\ldots, x_{n})$ with respect to the induced partial order in $E_{m,n}$. 
Finally, define the set $\sufijocbb{m}{n}{t_{1}}{t_{2}}$ for $t_{1}\in V_m$ and $t_{2}\in
V_n$ by $$\sufijocbb{m}{n}{t_{1}}{t_{2}}=\set{\sufijofbb{m}{n}(x)\talque \tau_m(x)=t_{1} \y \tau_n(x)=t_{2}}.$$
It is not difficult to prove that the cardinality of $\sufijocbb{m}{n}{t_{1}}{t_{2}}$ is equal to 
$\matrizp{m}{n}{t_{1}}{t_2}$, that is, the number of paths from $t_{1} \in V_{m}$ to $t_{2} \in V_{n}$.

If necessary, to simplify notations we will denote $\sufijocbb{n}{n+1}{{t_{1}}}{t_{2}}$ by 
$\sufijocb{n}{t_{1}}{t_{2}}$ and $\sufijofbb{n}{n+1}$ by $\sufijofb{n}$. 
Notice that $\sufijofb{n}(x)=\la s_{n}(x), (1,\ldots,1) \ra=\sum_{t\in V_{n}} s_{n}(x,t)$ for any $x\in X$.
\medskip

We will need the following simple relations.
For $0\leq \ell < m < n$, $ t_{1}\in V_{\ell}$ and $x\in X$ the following equalities hold:
\begin{eqnarray}
  \label{eq:retorno_toeplitz} r_\ell(x) &=& \sufijofb{0}(x) + \sum_{i=1}^{\ell-1}p_i\sufijofb{i}(x), \\
  \label{eq:sufijo_dos_niveles}\sufijofbb{\ell}{m}(x) &=& \sufijofb{\ell}(x)+\sum_{i=\ell+1}^{m-1}q_{\ell+1}q_{\ell+2}\cdots q_{i}\sufijofb{i}(x) \nonumber \\
  &=& \frac{r_m(x)-r_\ell(x)}{p_\ell},  \\
  \label{eq:interpolacion_sufijos} \sufijofbb{\ell}{n}(x) &=& \sufijofbb{\ell}{m}(x)+q_{\ell,m}\sufijofbb{m}{n}(x),
\end{eqnarray}
\begin{eqnarray}
  \label{eq:base_en_bases_superiores} B_{\ell}(t_{1})&=&\bigcup_{t_{2}\in V_{m}}\bigcup_{s\in\sufijocbb{\ell}{m}{t_{1}}{t_{2}}}T^{-p_\ell s}B_{m}(t_{2}),
\end{eqnarray}
where the union in the right hand side is disjoint.

\section{Eigenvalues of Toeplitz systems of finite rank}

As was mentioned in the introduction, any countable subgroup of $\mathbb{S}^1=\{z \in \CC \ ; |z|=1 \}$ containing infinitely many rationals can be the set of eigenvalues of a Toeplitz subshift for a given invariant measure \cite{iwanik,toeplitzpreset}.
Also, $\exp(2 i \pi \, \alpha)  \in \mathbb{S}^{1}$ is a continuous eigenvalue of a minimal Cantor system with a Toeplitz type proper Bratteli-Vershik representation
if and only if $\alpha = a/p_m$ for some $a \in \ZZ$ and $m\geq 1$ \cite{Williams,Keane}. 
A direct proof can be given using the particular combinatorial structure of the Brattelli-Vershik representation 
of a  minimal Cantor system of Toeplitz type.  We sketch it here.
Using  \eqref{eq:retorno_toeplitz} and the fact that $p_{m}$ divides $p_{n}$ when $m\leq n$, one gets that
$r_{n}(x) /p_{m} = ({\bar s}_{0}(x) +\sum_{i=1}^{m-1} p_{i}{\bar s_{i}}(x))/p_{m} \mod \ZZ$, for all $n\geq m$. 
Hence, $\exp(2i\pi r_{n}(x) /p_{m})$ converges uniformly when $n\to \infty$, which is a necessary and sufficient condition for 
$\exp(2i\pi /p_{m})$, and thus $\exp(2 i \pi \ a/p_{m})$ for every $a \in \ZZ$, to be continuous eigenvalues in this context (see Proposition 12 in \cite{necesariasuficiente}). 

In the opposite direction, using the same criterion, if 
$\exp(2i\pi /b)$ with $b \in \ZZ$ is a continuous eigenvalue, then   
$(r_{n+1}(x)-r_{n}(x))/b=p_{n}{\bar s}_{n}(x)/b \mod \ZZ$ is close to $0$  
for any large enough $n \geq 1$ and uniformly in $x$. Taking a point $x$ such that ${\bar s}_{n}(x)=1$ allows to conclude that $1/b=a/p_{n}$ for some large $n\geq 1$ and $a\in \ZZ$. More details about continuous eigenvalues of Toeplitz type Bratteli-Vershik systems can be found in \cite{rangofinito}.

In the class of minimal Cantor systems with a Toeplitz type representation, the assumption of finite topological rank restricts the possibilities for non continuous eigenvalues. But, importantly, all are rational. In addition, if the characteristic sequence of a proper representation is bounded (or equivalently, a proper representation gives a linearly recurrent system), then all the eigenvalues are continuous. The following theorem gives a very restrictive condition verified by non continuous eigenvalues of Toeplitz systems in the finite rank case that are not linearly recurrent.

\begin{theo}{\cite{rangofinito}} 
\label{theo:vpsonracionales}
Let $(X,T)$ be a minimal Cantor system with a Toeplitz type proper Bratteli-Vershik representation of rank $d$ and characteristic sequence $(q_n;n\geq 1)$. Let  $\mu$ be an ergodic probability measure. If $\exp(2i \pi \, a/b)$, with $(a,b)=1$, is a non continuous
rational eigenvalue of $(X,T)$ for $\mu$, then $b/(b,p_n) \leq d$ for all $n$ large enough.
\end{theo}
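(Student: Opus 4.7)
My plan is to combine the $\mu$-a.e.\ convergence of the phase approximants $\lambda^{-(r_n+\rho_n\circ\tau_n)}\to f$ (recalled just before Lemma \ref{lemm:c_tiende_a_uno}) with the Toeplitz identity $\bigsqcup_{t_1\in V_n}\sufijocbb{n}{m}{t_1}{t_2}=\{0,1,\ldots,q_{n,m}-1\}$ for fixed $t_2\in V_m$. The hypothesis of non-continuity only serves to make the conclusion non-trivial: for a continuous $\lambda$ one has $b\mid p_n$ for some $n$, so $b/(b,p_n)=1$ automatically.

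After telescoping we may assume the Bratteli-Vershik representation is clean; fix an eigenfunction $f$ of modulus $1$. Telescoping the above a.e.\ convergence between two levels $n<m$ and using \eqref{eq:sufijo_dos_niveles} yields
\[
\lambda^{-p_n\sufijofbb{n}{m}(x)-(\rho_m(\tau_m(x))-\rho_n(\tau_n(x)))}\;\xrightarrow[n,m\to\infty]{}\;1 \qquad \mu\text{-a.e.},
\]
and since the left-hand side is bounded, also in $L^2(\mu)$. Hence, for every $\varepsilon,\delta>0$ there is $N$ such that for all $n,m\geq N$ the set $A_{n,m}$ on which the left-hand side lies within $\varepsilon$ of $1$ has $\mu(A_{n,m})>1-\delta$.

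Set $b'_n=b/(b,p_n)$ and $p'_n=p_n/(b,p_n)$. Since $(a,b)=(p'_n,b'_n)=1$, the first factor $\lambda^{-p_n\sufijofbb{n}{m}(x)}$ is a $b'_n$-th root of unity depending bijectively on $\sufijofbb{n}{m}(x)\bmod b'_n$, with distinct residues producing points at distance $\geq 2\sin(\pi/b'_n)$ on $\mathbb{S}^1$. The second factor depends only on $(t_1,t_2)=(\tau_n(x),\tau_m(x))\in V_n\times V_m$. Choosing $\varepsilon$ smaller than half this spacing, on $A_{n,m}$ the value $\sufijofbb{n}{m}(x)\bmod b'_n$ is forced to equal a single residue $r(t_1,t_2)$; fixing $t_2$ and varying $t_1\in V_n$, at most $d$ residues occur.

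Suppose, for contradiction, that $b'_n>d$ for arbitrarily large $n$. Since $(b,p_n)$ is nondecreasing, the non-increasing integer sequence $(b'_n)$ stabilizes at some $b'>d$. By the tower decomposition \eqref{eq:base_en_bases_superiores} one checks that $\mu\{\tau_n=t_1,\tau_m=t_2,\sufijofbb{n}{m}=s'\}=p_n\mu_m(t_2)$ for every $s'\in\sufijocbb{n}{m}{t_1}{t_2}$, independently of $s'$. Taking $m$ large enough that $q_{n,m}\geq b'$ (possible since $p_m\to\infty$), each residue modulo $b'$ receives, inside the tower $\{\tau_m=t_2\}$, a mass $\geq\lfloor q_{n,m}/b'\rfloor p_n\mu_m(t_2)\simeq \mu(\tau_m=t_2)/b'$. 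Hence the ``bad'' residues (those not among the at most $d$ good ones) accumulate at least $\frac{b'-d}{b'}\sum_{t_2\in I_\mu}\mu(\tau_m=t_2)\geq\frac{|I_\mu|\delta_0}{b'}$ of mass, where $\delta_0$ is the cleanness constant. This uniform positive lower bound contradicts that the bad mass tends to $0$, forcing $b/(b,p_n)\leq d$ for all $n$ large enough.

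\textbf{Main obstacle.} The technical crux is the quantitative mass accounting: one must extract from \eqref{eq:base_en_bases_superiores} that $\mu\{\tau_n=t_1,\tau_m=t_2,\sufijofbb{n}{m}=s'\}$ is independent of $s'\in\sufijocbb{n}{m}{t_1}{t_2}$, upgrade the $L^2$-convergence to an actual assignment $\sufijofbb{n}{m}\equiv r(t_1,t_2)\pmod{b'}$ on $A_{n,m}$ (choosing $\varepsilon$ below the $b'$-th-root-of-unity spacing, which is legal because $b'_n$ stabilizes), and finally exploit the covering identity $\bigsqcup_{t_1}\sufijocbb{n}{m}{t_1}{t_2}=\{0,\ldots,q_{n,m}-1\}$ to trap $b'\leq d$.
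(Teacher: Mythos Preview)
The paper does not prove this theorem; it is quoted from \cite{rangofinito} without proof. Your argument is correct and essentially complete. One minor simplification: the detour through cleanness and $I_\mu$ is unnecessary. Since $\sum_{t_2\in V_m}\mu(\tau_m=t_2)=1$, summing the bad-mass estimate over \emph{all} $t_2\in V_m$ gives directly $\mu(A_{n,m}^c)\geq (b'-d)\lfloor q_{n,m}/b'\rfloor/q_{n,m}\to (b'-d)/b'>0$, without invoking the cleanness constant $\delta_0$ at all.

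It is worth noting that the present paper's own machinery implicitly re-derives this bound, by a different route. Your proof is pointwise: you use the a.e.\ convergence of $\lambda^{-(r_n+\rho_n\circ\tau_n)}$ to produce, on a large set $A_{n,m}$, a residue assignment $\overline{s}_{n,m}\equiv r(t_1,t_2)\pmod{b'}$ via the root-of-unity spacing, and then count residues against the $d$ vertices. The paper instead proceeds integrally: the necessary direction of Theorem~\ref{theo:principal} (Section~5.1) establishes condition~(1) of Lemma~\ref{lemm:suma_matriz_coef_equivalentes} for each $t_2\in I_\mu$, and Lemma~\ref{lemm:indice_nivelmenor_sobre}(1) then shows that $\Psi_{m,n,t_2}:\{1,\ldots,d\}\to\{0,\ldots,\rb-1\}$ is surjective, hence $\rb\leq d$. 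That argument passes through the tower-base averages $c_n(t)$ of the eigenfunction and the identity~\eqref{eq:torren_en_torresm_con_suma}, and replaces your spacing estimate by the geometric Lemma~\ref{lemm:geometrico}. Both approaches ultimately rest on the same combinatorial fact~\eqref{eq:sumacoef_torres_todas}: each residue class modulo $\rb$ contains roughly $q_{n,m}/\rb$ of the suffixes over any fixed tower $t_2$. Your pointwise argument is more elementary in that it avoids the eigenfunction averages entirely.
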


Let $\ld=\exp(2 i\pi \, a/b)$, with $a,b$ integers such that $(a,b)=1$, be a non continuous rational eigenvalue as in the previous theorem. We notice that $b/(b,p_n) >1$ for all $n$ large enough. Indeed, if $b/(b,p_n)=1$ for some $n\geq 1$, then 
$1/b=a'/p_{n}$ for some $a' \in \ZZ$, which by the discussion above implies that $\exp(2i\pi \, a/b)$ is a continuous eigenvalue. Also, observe that $(b,p_n)$ is a non decreasing sequence of integers bounded by $b$, so $b/(b,p_n)$ is eventually constant, say equal to $\rb$. Since we are considering proper representations, the fact that $\rb>1$ implies that $(q_n;n\geq 1)$ tends to infinity with $n$. Otherwise, the system will be linearly recurrent, and thus all eigenvalues will be continuous, which implies that  $b/(b,p_n)=1$ for some $n>1$. 
\medskip

Now we state our main result, 
\medskip

\begin{theo}
\label{theo:principal}
Let $(X,T)$ be a minimal Cantor system with a Toeplitz type proper and clean Bratteli-Vershik representation of rank $d$ and  characteristic sequence $(q_n;n\geq 1)$. Let $\mu$ be an ergodic probability measure.
Then, $\lambda=\exp(2i\pi a/b)$, with $a,b$ integers such that $(a,b)=1$, is a non continuous eigenvalue of $(X,T)$ for $\mu$  if and only if 
\begin{enumerate}
\item
$b/(b,p_n)=\rb$ for all $n$ large enough and some $1<\rb\leq d$, and
\item\label{mainsecond}
for all $t_2\in I_{\mu}$ 
$$\sum_{t_{1}\in V_{m}}\frac{\abs{\sum_{s\in\sufijocbb{m}{n}{t_{1}}{t_2}}\lambda^{-p_ms}}}{q_{m,n}}\xrightarrow[m,n\to\infty]{}1,$$ 
uniformly in $m,n \in \NN$ with $m<n$.
\end{enumerate}
\end{theo}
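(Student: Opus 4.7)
My plan is organized around a single recursive identity. Let $f$ be an $L^2(\mu)$-eigenfunction for $\lambda$ with $|f|=1$. I compute $\int_{B_m(t_1)}f\,d\mu$ in two ways: directly via \eqref{eq:def_c_y_ro}, and by refining $B_m(t_1)$ through \eqref{eq:base_en_bases_superiores}. The $T$-invariance of $\mu$ together with $f(T^{-k}x)=\lambda^{-k}f(x)$ gives $\int_{T^{-p_m s}B_n(t_2)}f\,d\mu=\lambda^{-p_m s}\int_{B_n(t_2)}f\,d\mu$, so the two computations yield, for every $0\le m<n$ and $t_1\in V_m$,
\begin{equation*}
c_m(t_1)\mu_m(t_1)\lambda^{-\rho_m(t_1)}=\sum_{t_2\in V_n}c_n(t_2)\mu_n(t_2)\lambda^{-\rho_n(t_2)}\,\Sigma_{m,n}(t_1,t_2),
\end{equation*}
where $\Sigma_{m,n}(t_1,t_2):=\sum_{s\in \sufijocbb{m}{n}{t_1}{t_2}}\lambda^{-p_m s}$. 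This identity is the engine of both directions.

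For the necessity of (1) I invoke Theorem \ref{theo:vpsonracionales} together with the argument preceding the statement showing $\rb>1$ whenever $\lambda$ is not continuous. For the necessity of (2) I multiply the identity by $p_m$ (so that $p_m\mu_m(t_1)=\mu(\tau_m=t_1)$ on the left and $p_m\mu_n(t_2)=\mu(\tau_n=t_2)/q_{m,n}$ on the right), take moduli, and sum over $t_1\in V_m$:
\begin{equation*}
\sum_{t_1\in V_m}c_m(t_1)\mu(\tau_m=t_1)\;\le\;\sum_{t_2\in V_n}c_n(t_2)\mu(\tau_n=t_2)\cdot\frac{\sum_{t_1\in V_m}|\Sigma_{m,n}(t_1,t_2)|}{q_{m,n}}.
\end{equation*}
By Lemma \ref{lemm:c_tiende_a_uno} and cleanness, the left-hand side tends to $1$ as $m\to\infty$. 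Since $\sum_{t_1}|\Sigma_{m,n}(t_1,t_2)|\le\sum_{t_1}\matrizp{m}{n}{t_1}{t_2}=q_{m,n}$, each factor on the right lies in $[0,1]$, and $\sum_{t_2\in I_\mu}c_n(t_2)\mu(\tau_n=t_2)\to 1$. The lower bound $c_n(t_2)\mu(\tau_n=t_2)\ge\delta/2$ for $t_2\in I_\mu$ and $n$ large forbids a persistent deficit in the factor for any such $t_2$, which gives the required uniform convergence in $m,n$.

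For sufficiency I reverse-engineer the identity to produce an eigenfunction. Condition (2) combined with $|\Sigma_{m,n}(t_1,t_2)|\le \matrizp{m}{n}{t_1}{t_2}$ forces the unit numbers $(\lambda^{-p_m s})_{s\in \sufijocbb{m}{n}{t_1}{t_2}}$ to cluster in phase for most $(t_1,t_2)$ with $t_2\in I_\mu$. Via an $\ell^2$-concentration estimate and a diagonal extraction across scales, I select phases $\rho_n(t_2)\in[0,1)$ for $t_2\in I_\mu$ (and arbitrarily otherwise) so that $\Sigma_{m,n}(t_1,t_2)\lambda^{-\rho_n(t_2)+\rho_m(t_1)}/\matrizp{m}{n}{t_1}{t_2}$ tends to $1$ with the right uniformity. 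Setting $f_n(x):=\lambda^{-r_n(x)-\rho_n(\tau_n(x))}$, a $\T_n$-measurable function of modulus one, the same tower decomposition yields
\begin{equation*}
\int f_n\overline{f_m}\,d\mu=\sum_{t_2\in V_n}\frac{\mu(\tau_n=t_2)}{q_{m,n}}\lambda^{-\rho_n(t_2)}\sum_{t_1\in V_m}\lambda^{\rho_m(t_1)}\,\Sigma_{m,n}(t_1,t_2),
\end{equation*}
which by the choice of the $\rho_k$ and the uniformity in (2) tends to $1$. Hence $\|f_n-f_m\|_{L^2}^2=2-2\operatorname{Re}\int f_n\overline{f_m}\,d\mu\to 0$, $(f_n)$ is Cauchy in $L^2(\mu)$, and its limit $f$ has $|f|=1$ and $f\circ T=\lambda f$ off the null set of tower tops. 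Non-continuity follows from (1) via the characterization of continuous eigenvalues recalled before the statement.

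The main obstacle is the sufficiency step: extracting a single sequence of phases $\rho_n$ that is simultaneously compatible with every reference level $m$, and converting the uniform modulus-closeness in (2) into the directed phase alignment needed for the Cauchy bound. The uniformity in both $m$ and $n$ in (2) is precisely what makes this extraction work.
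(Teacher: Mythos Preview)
Your necessity argument is essentially the paper's: the same integral identity obtained from \eqref{eq:base_en_bases_superiores} and \eqref{eq:def_c_y_ro}, the same squeeze between $\sum_{t_1}c_m(t_1)\mu(\tau_m=t_1)$ and $\sum_{t_2}c_n(t_2)\mu(\tau_n=t_2)$ via Lemma~\ref{lemm:c_tiende_a_uno}, and the same use of $\mu(\tau_n=t_2)\ge\delta$ on $I_\mu$ to isolate each $t_2$. The only cosmetic difference is that you sum over $t_1$ before squeezing, whereas the paper squeezes for each fixed $t_1$ and sums at the end; both yield \eqref{eq:PvsSum}.

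Your sufficiency argument has a real gap, and you identify it yourself in the last paragraph without closing it. Condition~(2) controls only the moduli $|\Sigma_{m,n}(t_1,t_2)|$; to make $\int f_n\overline{f_m}\,d\mu\to 1$ you need the \emph{phases} of $\Sigma_{m,n}(t_1,t_2)$ to be (approximately) a coboundary, $\arg\Sigma_{m,n}(t_1,t_2)\approx\rho_n(t_2)-\rho_m(t_1)$, simultaneously for all $m<n$. Nothing in (2) gives this separability directly, and ``an $\ell^2$-concentration estimate and a diagonal extraction'' does not produce it: a diagonal argument lets you stabilize a choice along a subsequence, but it does not explain why a single $\rho_n$ is compatible with \emph{every} earlier level $m$. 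Concretely, in your expression for $\int f_n\overline{f_m}\,d\mu$ you need $\sum_{t_1}\lambda^{\rho_m(t_1)}\Sigma_{m,n}(t_1,t_2)$ to have modulus close to $q_{m,n}$, which already requires the phases $\arg\Sigma_{m,n}(t_1,t_2)+\rho_m(t_1)$ to be aligned across $t_1$ --- a statement you have not established.

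The paper fills exactly this gap, and the machinery is substantial. It first exploits the rationality of $\lambda$: since $b/(b,p_m)=\rb$, the numbers $\lambda^{-p_ms}$ take only $\rb$ values, so via the geometric Lemma~\ref{lemm:geometrico} condition~(2) upgrades to the existence of indices $k_{m,n}(t_1,t_2)\in\{0,\ldots,\rb-1\}$ with $\sumacoef{m}{n}{t_1}{t_2}{k_{m,n}(t_1,t_2)}/q_{m,n}\approx \matrizp{m}{n}{t_1}{t_2}/q_{m,n}$ (Lemma~\ref{lemm:suma_matriz_coef_equivalentes}). A pigeonhole telescoping (Lemma~\ref{lemma:contraction-partition}) stabilizes these indices to a map $k(t_1,t_2)$ independent of $m,n$. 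The crucial step is then to prove the cocycle identity $p\,k(t_1,t_3)\equiv_b p\,k(t_1,t_2)+p\,k(t_2,t_3)$ on $I_\mu$: this uses the factorization $\Sigma_{\ell,n}=\sum_{t_2}\Sigma_{\ell,m}(\cdot,t_2)\Sigma_{m,n}(t_2,\cdot)$, the lower bound of Lemma~\ref{lemma:low-independence} to ensure every $t_2\in I_\mu$ carries nonnegligible weight, and Lemma~\ref{lemm:geometrico} once more. Only after this cocycle is in hand does one set $\rho_n(t)=-p\,k(t_0,t)$ and conclude (the paper via the a.e.\ criterion of Theorem~\ref{theorem:vpmeasurable} and Borel--Cantelli; your $L^2$-Cauchy route would work equally well at that point). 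If you want to salvage your approach, this cocycle step --- or an equivalent phase-compatibility argument --- is what you must supply.
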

As was mentioned in the introduction, even if this condition looks ``heavy'' to check, in fact it is easy to verify and construct examples fulfilling it. This will be illustrated in Section \ref{examples}. The main tool is provided by the following corollary that follows from the construction in the proof of Theorem \ref{theo:principal}.
\smallskip

\begin{coro}
\label{coro:cocycle}
Let $(X,T)$ be a minimal Cantor system with a Toeplitz type proper and clean Bratteli-Vershik representation of rank $d$ 
and  characteristic sequence $(q_n;n\geq 1)$. Let $\mu$ be an ergodic probability measure.
Let $(q_n;n\geq 1)$ be its characteristic sequence. 
Then, $\lambda=\exp(2i\pi a/b)$, with $a,b$ integers such that $(a,b)=1$, is a non continuous eigenvalue of $(X,T)$ for $\mu$  if and only if up to a telescoping of the diagram we have
\begin{enumerate}
\item
$p_{n} = p \mod b$ for some $p \in \{0,\ldots,b-1\}$ and for all $n\geq 2$,
\item
$b/(b,p_n)=\rb$ for all $n$ large enough and some $1<\rb\leq d$, 
\item
there exists a map 
$k(\cdot,\cdot): \{1,\ldots,d \}\times \{1,\ldots,d \} \to \{0,\ldots,\rb-1\}$ 
such that
\begin{align*}
p \cdot k (t_{1},t_{3}) &=  p \cdot k (t_{1},t_{2}) + p \cdot k (t_{2},t_{3}) \mod b, \\
p \cdot k (t_{1},t_{1}) &= 0 \mod b, \ \  p \cdot k (t_{1},t_{2}) = -p \cdot k (t_{2},t_{1}) \mod b,
\end{align*}
for all $t_{1},t_{2}, t_{3}\in I_{\mu}$, 
\item for $\mu$--almost every point $x\in X$ the equality $\bar s_{n}(x)=k(\tau_{n}(x),\tau_{n+1}(x)) \mod \rb$ holds 
for all large enough $n\in \NN$.
\end{enumerate}
\end{coro}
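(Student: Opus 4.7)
The plan is to view Corollary \ref{coro:cocycle} as an explicit combinatorial unpacking of condition (2) of Theorem \ref{theo:principal}. The key quantitative observation is that, by the triangle inequality, $|\sum_{s\in\sufijocbb{m}{n}{t_1}{t_2}}\lambda^{-p_m s}| \leq \matrizp{m}{n}{t_1}{t_2}$, while $\sum_{t_1}\matrizp{m}{n}{t_1}{t_2}=q_{m,n}$. So the sum in Theorem \ref{theo:principal}(2) is always at most $1$, and its uniform convergence to $1$ forces, on every class $\sufijocbb{m}{n}{t_1}{t_2}$ with non-negligible relative weight $\matrizp{m}{n}{t_1}{t_2}/q_{m,n}$, the unit vectors $\{\lambda^{-p_m s}\}$ to be asymptotically aligned. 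Since $\lambda^{p_m}$ has order $\rb=b/(b,p_m)$, this alignment amounts to saying that the residue $\sufijofbb{m}{n}(x) \bmod \rb$ depends asymptotically only on $(\tau_m(x),\tau_n(x))$.

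For the direct implication, begin with Theorem \ref{theo:principal} and telescope to a subsequence on which $p_n \equiv p \pmod{b}$; this is possible because $p_n \bmod b$ takes only finitely many values, and the subsequence still satisfies $b/(b,p_n)=\rb$, yielding (1) and (2) of the corollary. Applying the preceding alignment at consecutive levels defines a map $k_m(t_1,t_2) \in \{0,\ldots,\rb-1\}$ as the dominant residue of $\sufijofb{m} \bmod \rb$ on $\{\tau_m = t_1, \tau_{m+1} = t_2\}$. Because each $k_m$ takes values in a finite set, a further telescoping makes it level-independent, giving a single $k$; combining this with an Egorov/Borel--Cantelli argument on the shrinking exceptional sets produces condition (4).

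The cocycle identity (3) follows from \eqref{eq:interpolacion_sufijos}: using $p_j \equiv p \pmod{b}$ and the definition of $q_{m,n}$, one obtains $p_m \sufijofbb{m}{n}(x) \equiv p \sum_{j=m}^{n-1} \sufijofb{j}(x) \equiv p \sum_{j=m}^{n-1} k(\tau_j(x),\tau_{j+1}(x)) \pmod{b}$ for $\mu$-a.e.\ $x$ and large $m$. Picking $m_1 < m_2 < m_3$ and a point $x$ of positive measure (available by the cleanness hypothesis, property (H2) and full support of $\mu$) with $\tau_{m_i}(x) = t_i \in I_\mu$, the relation $p_{m_1} \sufijofbb{m_1}{m_3}(x) = p_{m_1}\sufijofbb{m_1}{m_2}(x) + p_{m_2}\sufijofbb{m_2}{m_3}(x)$ then gives the additive identity $p \cdot k(t_1,t_3) \equiv p\cdot k(t_1,t_2) + p\cdot k(t_2,t_3) \pmod{b}$; the diagonal and symmetry relations follow from the same two-step comparison with $t_1=t_3$ or $t_2=t_1$.

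For the converse, assume (1)--(4). Iterating (4) with the cocycle (3) yields, for $\mu$-a.e.\ $x$ and all large $m<n$, the identity $\lambda^{-p_m \sufijofbb{m}{n}(x)} = e^{-2\pi i a p\,k(\tau_m(x),\tau_n(x))/b}$, so on each class $\sufijocbb{m}{n}{t_1}{t_2}$ with $t_1, t_2 \in I_\mu$ the exponentials align and $|\sum_s \lambda^{-p_m s}| \approx \matrizp{m}{n}{t_1}{t_2}$. For $t_1 \notin I_\mu$, relation \eqref{eq:measure} gives $\matrizp{m}{n}{t_1}{t_2}/q_{m,n} \leq \mu(\tau_m = t_1)/\mu(\tau_n = t_2)$, which tends to $0$ uniformly in $n$ since $\mu(\tau_m = t_1) \to 0$ while $\mu(\tau_n = t_2) \geq \delta$ for $t_2 \in I_\mu$. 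Summing over $t_1$ recovers the limit $1$ in Theorem \ref{theo:principal}(2), and $\lambda$ is a non-continuous eigenvalue. The main obstacle will be upgrading the averaged alignment statement in Theorem \ref{theo:principal}(2) to the pointwise a.e.\ identity required by (4): this calls for a careful Egorov/Borel--Cantelli argument along the telescoped levels in order to control the exceptional sets simultaneously at all scales.
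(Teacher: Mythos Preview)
Your overall strategy matches the paper's: both directions are read off from the proof of Theorem \ref{theo:principal}, with the ``alignment'' of the exponentials $\lambda^{-p_m s}$ encoded by a dominant residue $k(t_1,t_2)$ and a Borel--Cantelli argument producing the full-measure set in (4). Two points deserve comment, one a difference of route, one a genuine gap.

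\medskip

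\emph{Converse direction.} You close the loop through Theorem \ref{theo:principal}(2). The paper does not: once (1)--(4) are in hand it verifies Theorem \ref{theorem:vpmeasurable} directly, setting $\rho_n(t)=-p\,k(t_0,t)$ for a fixed $t_0\in I_\mu$ and checking that $\lambda^{r_{n+1}(x)+\rho_{n+1}(\tau_{n+1}(x))}=\lambda^{r_n(x)+\rho_n(\tau_n(x))}$ on the full-measure set $\mathcal C$, using only $r_{n+1}-r_n=p_n\bar s_n$, (1), (3) and (4). This avoids the uniformity bookkeeping your route requires (controlling the number of ``bad'' finite paths from level $m$ to level $n$ uniformly in $n$), and also sidesteps the need to know that $\tau_j(x)\in I_\mu$ for all intermediate $j$, which your iteration of the cocycle implicitly uses but (4) alone does not provide.

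\medskip

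\emph{Cocycle identity (3): a gap.} As written, you define $k$ only as the dominant residue of $\bar s_m\bmod\rb$ at \emph{consecutive} levels, telescope to make it level-independent, and then try to obtain (3) from the splitting
\[
p_{m_1}\sufijofbb{m_1}{m_3}(x)=p_{m_1}\sufijofbb{m_1}{m_2}(x)+p_{m_2}\sufijofbb{m_2}{m_3}(x).
\]
But this identity is a tautology on the level of sums of consecutive $\bar s_j$'s; it does not relate the two-step quantity $\sufijofbb{m_1}{m_3}(x)\bmod\rb$ to $k(t_1,t_3)$, because your $k$ is not yet known to be the dominant residue at \emph{non-consecutive} levels. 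In the paper this is handled in two steps that your outline does not contain: first, Lemma \ref{lemm:suma_matriz_coef_equivalentes}(3) gives a dominant residue $k_{m,n}(t_1,t_2)$ for \emph{every} pair $m<n$, and the special telescoping of Lemma \ref{lemma:contraction-partition} forces $k_{m,n}=k$ independently of $m,n$; second, the cocycle is derived from the expansion \eqref{eq:decomposition} by writing the right-hand side as a convex combination of $b$-th roots of unity close to $1$, applying the geometric Lemma \ref{lemm:geometrico}, and using the uniform lower bound of Lemma \ref{lemma:low-independence} to ensure that every $t_2\in I_\mu$ carries weight at least $\delta^2/9$ and hence must satisfy $p(k(t_1,t_3)-k(t_1,t_2)-k(t_2,t_3))\equiv 0\pmod b$. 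Your ``pick a point of positive measure'' heuristic is morally the same idea, but it cannot succeed without (i) knowing $k$ at non-consecutive levels and (ii) a quantitative lower bound of the type in Lemma \ref{lemma:low-independence} guaranteeing that the cylinder $\{\tau_{m_1}=t_1,\tau_{m_2}=t_2,\tau_{m_3}=t_3\}$ has measure bounded below independently of the levels, so that it meets the large ``typical'' set. Cleanness, (H2), and full support alone do not give this uniform bound.
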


In what follows we provide a number of reformulations and corollaries of the main theorem. 
Some proofs are left to the reader since they can be easily deduced from a direct computation or Lemmas \ref{lemm:suma_matriz_coef_equivalentes} and \ref{lemm:indice_nivelmenor_sobre} provided below, others will be proved near the end of Section \ref{mainproofs} after proving the main theorem.

We start by a natural reformulation of Theorem \ref{theo:principal}. It says that we can replace $V_{m}$ by $I_{\mu}$ in the sum of statement (2) of the theorem. In other words, we only need to consider the vertices of the diagram determining the measure $\mu$. 
We will need the following observation: for $t_{1}\not\in I_{\mu}$ and $t_2\in I_{\mu}$ one has
\begin{equation}\label{eq:p_tiende_a_cero_fuera_de_I}
\frac{\matrizp{m}{n}{t_{1}}{t_2}}{q_{m,n}}\xrightarrow[m,n\to\infty]{}0
\end{equation}
uniformly in $m,n \in \NN$ with $m<n$. Indeed, since the diagram is clean, 
$\mu(\torref{n}=t_2) \geq \delta > 0$ and $\lim_{m\to \infty}\mu(\torref{m}=t_1)=0$. These facts, together with the following inequalities  
$$\frac{\matrizp{m}{n}{t_{1}}{t_2}}{q_{m,n}} \cdot \delta \leq
\frac{\matrizp{m}{n}{t_{1}}{t_2}}{q_{m,n}}\mu(\torref{n}=t_2)=\mu(\torref{m}=t_1, \torref{n}=t_{2})\leq \mu(\torref{m}=t_{1}),
$$ 
allow to deduce \eqref{eq:p_tiende_a_cero_fuera_de_I}. Since the cardinality of $\sufijocbb{m}{n}{t_{1}}{t_2}$ is equal to 
$\matrizp{m}{n}{t_{1}}{t_2}$, we also deduce that
$$\sum_{t_{1}\in V_{m}\setminus I_{\mu}}\frac{\abs{\sum_{s\in\sufijocbb{m}{n}{t_{1}}{t_2}}\lambda^{-p_ms}}}{q_{m,n}}
\leq \sum_{t_{1}\in V_{m}\setminus I_{\mu}}\frac{\matrizp{m}{n}{t_{1}}{t_2}}{q_{m,n}}.$$

Therefore, a direct application of \eqref{eq:p_tiende_a_cero_fuera_de_I} in the last inequality allows to reformulate Theorem \ref{theo:principal} as follows.

 \begin{coro}[Variation on Theorem \ref{theo:principal}]\label{theo:principal_var1} 
The complex number $\lambda=\exp(2i\pi a/b)$, with $a,b$ integers such that $(a,b)=1$, is a non continuous eigenvalue of $(X,T)$ for $\mu$  if and only if 
\begin{enumerate}
\item
$b/(b,p_n)=\rb$ for all $n$ large enough and some $1<\rb\leq d$, and
\item
for all $t_2\in I_{\mu}$  
$$\sum_{t_{1}\in I_{\mu}}\frac{\abs{\sum_{s\in\sufijocbb{m}{n}{t_{1}}{t_2}}\lambda^{-p_ms}}}{q_{m,n}}\xrightarrow[m,n\to\infty]{}1,$$ 
uniformly in $m,n \in \NN$ with $m<n$.
\end{enumerate}
\end{coro}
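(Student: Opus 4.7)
The plan is to derive this corollary directly from Theorem \ref{theo:principal} by observing that condition (1) is identical in both statements, so only the sum in condition (2) needs to be compared. Concretely, I would show that the quantity
$$A_{m,n}(t_2) := \sum_{t_1 \in V_m} \frac{\bigl|\sum_{s \in \sufijocbb{m}{n}{t_1}{t_2}} \lambda^{-p_m s}\bigr|}{q_{m,n}}$$
and its $I_\mu$-restricted analogue $B_{m,n}(t_2)$ differ by a term that vanishes uniformly in $m < n$; since the limit value $1$ is the same, the two conditions are equivalent.

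The key ingredient is the uniform estimate \eqref{eq:p_tiende_a_cero_fuera_de_I}, already proven in the paragraph preceding the corollary: for $t_1 \notin I_\mu$ and $t_2 \in I_\mu$, cleanness of the diagram gives $\mu(\tau_n = t_2) \geq \delta$ and $\mu(\tau_m = t_1) \to 0$, so $P_{m,n}(t_1, t_2)/q_{m,n} \to 0$ uniformly in $m < n$. Coupled with the trivial triangle-inequality bound
$$\Bigl|\sum_{s \in \sufijocbb{m}{n}{t_1}{t_2}} \lambda^{-p_m s}\Bigr| \leq |\sufijocbb{m}{n}{t_1}{t_2}| = \matrizp{m}{n}{t_1}{t_2},$$
this yields
$$0 \leq A_{m,n}(t_2) - B_{m,n}(t_2) = \sum_{t_1 \in V_m \setminus I_\mu} \frac{\bigl|\sum_{s \in \sufijocbb{m}{n}{t_1}{t_2}} \lambda^{-p_m s}\bigr|}{q_{m,n}} \leq \sum_{t_1 \in V_m \setminus I_\mu} \frac{\matrizp{m}{n}{t_1}{t_2}}{q_{m,n}}.$$
The right-hand side is a sum of at most $d$ terms, each tending to $0$ uniformly in $m < n$, so $A_{m,n}(t_2) - B_{m,n}(t_2) \to 0$ uniformly.

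Since this difference is $o(1)$ uniformly in $m < n$, $A_{m,n}(t_2) \to 1$ uniformly if and only if $B_{m,n}(t_2) \to 1$ uniformly, establishing the equivalence between condition (2) of Theorem \ref{theo:principal} and condition (2) of the corollary. There is no real obstacle here: the work is entirely contained in the uniform estimate \eqref{eq:p_tiende_a_cero_fuera_de_I}, which has already been justified from the cleanness hypothesis and basic facts about the Kakutani--Rokhlin measures.
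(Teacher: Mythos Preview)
Your proposal is correct and follows essentially the same approach as the paper: both argue that the difference between the full sum over $V_m$ and the $I_\mu$-restricted sum is bounded by $\sum_{t_1 \notin I_\mu} \matrizp{m}{n}{t_1}{t_2}/q_{m,n}$, which tends to $0$ uniformly by the cleanness estimate \eqref{eq:p_tiende_a_cero_fuera_de_I}. The paper's proof is precisely the paragraph preceding the corollary, and your write-up reproduces its content faithfully.
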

\bigskip

The following corollary is a reformulation of the main condition of Theorem \ref{theo:principal} and the corresponding one in Corollary \ref{theo:principal_var1}. It follows almost directly by combining Lemmas \ref{lemm:suma_matriz_coef_equivalentes} and \ref{lemm:indice_nivelmenor_sobre} in the next section, so its proof is left to the reader.

\begin{coro}\label{cor:condwithpartition}
The main condition in Theorem \ref{theo:principal} (resp. Corollary \ref{theo:principal_var1}) is equivalent to: 
for all $t_2\in I_{\mu}$ and $m\geq 1$ there exists a sequence of partitions $(\H_{m,n,t_2}; m<n)$ of 
$V_{m}$ (resp. of $I_{\mu}$) with $\#\H_{m,n,t_2}=\rb$ such that
$$\sum_{t_{1}\in A}\frac{\abs{\sum_{s\in\sufijocbb{m}{n}{t_{1}}{t_2}}\lambda^{-p_ms}}}{q_{m,n}}\xrightarrow[m,n\to\infty]{}\frac{1}{\rb},$$ 
uniformly in $m,n \in \NN$ with $m<n$ for any $A\in\H_{m,n,t_2}$.
\end{coro}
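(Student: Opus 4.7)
Write $M_{m,n}(t_1,t_2):=\abs{\sum_{s\in\sufijocbb{m}{n}{t_1}{t_2}}\lambda^{-p_m s}}/q_{m,n}$ for the summands appearing in both statements. The backward implication is immediate: summing the hypothesis over the $\rb$ classes of $\H_{m,n,t_2}$ yields $\rb\cdot(1/\rb)=1$ in the limit, uniformly in $m<n$, which is exactly the main condition. The variant with $I_\mu$ in place of $V_m$ works identically, after discarding the negligible contributions from $t_1\notin I_\mu$ via \eqref{eq:p_tiende_a_cero_fuera_de_I}. The substantive content is therefore the forward implication.

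For the forward direction, the triangle inequality gives $M_{m,n}(t_1,t_2)\leq P_{m,n}(t_1,t_2)/q_{m,n}$, with equality if and only if the numbers $\lambda^{-p_m s}$ are constant as $s$ ranges over $\sufijocbb{m}{n}{t_1}{t_2}$, while the Toeplitz-type property yields $\sum_{t_1\in V_m}P_{m,n}(t_1,t_2)=q_{m,n}$. The main condition thus forces these triangle inequalities to be asymptotically saturated. Since condition (1) of Theorem \ref{theo:principal} makes $\lambda^{-p_m}$ a primitive $\rb$-th root of unity, the values $\lambda^{-p_m s}$ lie in the cyclic group $\{\omega^0,\ldots,\omega^{\rb-1}\}$ with $\omega=\exp(2i\pi/\rb)$. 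This is precisely the setting of Lemma \ref{lemm:suma_matriz_coef_equivalentes}, which translates asymptotic saturation into a well-defined dominant-index map $k_{m,n,t_2}:V_m\to\{0,\ldots,\rb-1\}$, namely the residue class of $\sufijofbb{m}{n}$ modulo $\rb$ on which asymptotically almost all paths concentrate. I then take $\H_{m,n,t_2}$ to be the partition of $V_m$ (or of $I_\mu$) into the level sets of $k_{m,n,t_2}$, padded with empty classes if necessary to guarantee $\#\H_{m,n,t_2}=\rb$.

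The remaining step, and the main obstacle, is to show that this partition is \emph{balanced}: every class carries asymptotic mass $1/\rb$. This is exactly the role of Lemma \ref{lemm:indice_nivelmenor_sobre}, which shows that the dominant-index map $k_{m,n,t_2}$ distributes mass uniformly across all $\rb$ residues; in particular no padding is needed and the partition really has $\rb$ nonempty classes. A priori the map could collapse to fewer values, and the fact that it does not is a genuine structural consequence of $\rb=b/(b,p_n)$ being the minimal period: collapsing would amount to $\lambda$ being generated at a lower level of the diagram, contradicting non-continuity. Once the uniform distribution is established, combining it with the asymptotic saturation of the triangle inequalities gives $\sum_{t_1\in A}M_{m,n}(t_1,t_2)\to 1/\rb$ uniformly for each $A\in\H_{m,n,t_2}$, which is the required conclusion.
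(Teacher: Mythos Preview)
Your proposal is correct and follows exactly the route the paper indicates: the backward implication is trivial by summing over the $\rb$ classes, and the forward one is obtained by defining $\H_{m,n,t_2}$ as the fibres of the dominant-index map $t_1\mapsto k_{m,n}(t_1,t_2)$ from Lemma~\ref{lemm:suma_matriz_coef_equivalentes}(3), then invoking Lemma~\ref{lemm:indice_nivelmenor_sobre} to see that each fibre carries asymptotic mass $1/\rb$ (with the passage from $\sumacoef{m}{n}{t_1}{t_2}{k}$ to $|\suma{m}{n}{t_1}{t_2}|$ justified by conditions (2) and (3) of Lemma~\ref{lemm:suma_matriz_coef_equivalentes}). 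The only remark is that your heuristic explanation for why the map does not collapse (``collapsing would amount to $\lambda$ being generated at a lower level'') is not the actual mechanism in Lemma~\ref{lemm:indice_nivelmenor_sobre}, which instead uses the counting identity \eqref{eq:sumacoef_torres_todas}; but since you cite the lemma itself for the proof, this does not affect correctness.
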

\medskip

This formulation pinpoints to the possible local orders that accept a Bratteli-Vershik representation to have non continuous eigenvalues. Part (3) of Lemma \ref{lemm:suma_matriz_coef_equivalentes} states that the main condition of Theorem \ref{theo:principal} (or its equivalent formulations) implies that the local order of most of the arrows from a vertex in an atom $A \in \H_{m,n,t_2}$ to $t_{2} \in I_{\mu}$ at level $n$ must be congruent modulo $\rb$. 
This condition is one of the main tools to explore non continuous rational eigenvalues of Toeplitz systems.   
\medskip

Another interesting fact is that we can relate non continuous eigenvalues with the number of ergodic invariant measures of a Toeplitz system. Let $(X,T)$ be a minimal Cantor system and $\mu$ an ergodic measure as in Theorem \ref{theo:principal}.
Define,
$$
\rB_{\mu}=\{\lim_{m\to \infty}b/(b,p_m) ;  b \in \NN, \ \exp(2i\pi/b)   \textrm{ is a non continuous eigenvalue for } \mu \}
$$
and endow it with the divisibility (partial) order. Recall that $\lim_{m\to \infty}b/(b,p_m)$ is equal to $\rb=b/(b,p_n)$ for a large $n \in \NN$. 
Denote by $\mathcal{M}_{erg} (X,T)$ the set of ergodic measures of $(X,T)$ and consider the set $\mathcal{M}$ defined by:
$$
\mathcal{M}=\left\{\mu \in \mathcal{M}_{erg}(X,T) \ ; \  \rB_{\mu}\not = \emptyset \right \}.
$$

\begin{coro}
\label{cor:alternative} The following properties hold:
\begin{enumerate}
\item For any $\mu \in \mathcal{M}$ and $\rb \in \rB_{\mu}$, $\rb \leq \# I_{\mu}$.
\item For any $\mu\in \mathcal{M}$, $\rB_{\mu}$ has a unique divisibility-maximal element $\rb_{\mu}$.
\item
$\sum_{\mu \in \mathcal{M}} \rb_{\mu}\leq d$.
\item
$\# \mathcal{M} \leq \# \mathcal{M}_{erg} (X,T)\leq d-\sum_{\mu \in \mathcal{M}} (\rb_{\mu}-1)$.
\end{enumerate}
\end{coro}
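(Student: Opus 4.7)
Statement (1) is essentially a direct reading of Corollary \ref{cor:condwithpartition}: if $\rb\in\rB_\mu$ is witnessed by a non continuous eigenvalue $\exp(2i\pi/b)$ with $b/(b,p_n)=\rb$ for $n$ large, the corollary provides, for each $t_2\in I_\mu$ and each $m$, partitions of $I_\mu$ into exactly $\rb$ (nonempty) atoms, forcing $\#I_\mu\geq\rb$.

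For (2), the plan is to show that $\rB_\mu$ is closed under least common multiples. Let $b_1,b_2$ be positive integers with $\lambda_i:=\exp(2i\pi/b_i)$ non continuous eigenvalues for $\mu$, and set $b:=\operatorname{lcm}(b_1,b_2)$. B\'ezout's identity gives integers $u,v$ with $u/b_1+v/b_2=1/b$, so $\exp(2i\pi/b)=\lambda_1^{u}\lambda_2^{v}$ is an eigenvalue; it cannot be continuous, for otherwise $\lambda_1$, being one of its powers, would be continuous too. A prime-by-prime $q$-adic valuation check then shows
\[
\frac{b}{(b,p_n)}\;=\;\operatorname{lcm}\!\left(\frac{b_1}{(b_1,p_n)},\;\frac{b_2}{(b_2,p_n)}\right)\;=\;\operatorname{lcm}(\rb_1,\rb_2)
\]
for $n$ large enough. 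Theorem \ref{theo:vpsonracionales} and the observation that follows it force $\rB_\mu\subseteq\{2,\dots,d\}$, hence $\rB_\mu$ is finite, and closure under $\operatorname{lcm}$ immediately produces a unique divisibility-maximal element $\rb_\mu$.

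Parts (3) and (4) reduce to counting on the clean diagram. Cleanliness gives pairwise disjoint $\{I_\mu\}_{\mu\in\mathcal{M}_{erg}(X,T)}$ inside $\{1,\dots,d\}$, and each $I_\mu$ is nonempty because $\sum_t\mu(\tau_n=t)=1$ combined with the dichotomy defining $I_\mu$ (each vertex has either mass $\geq\delta$ for all $n$ or vanishing mass) forces at least one vertex into $I_\mu$. Therefore $\sum_{\mu\in\mathcal{M}_{erg}(X,T)}\#I_\mu\leq d$. The bounds $\#I_\mu\geq\rb_\mu$ for $\mu\in\mathcal{M}$ (from (1)) and $\#I_\mu\geq 1$ for $\mu\in\mathcal{M}_{erg}(X,T)\setminus\mathcal{M}$ then combine to yield
\[
d\;\geq\;\sum_{\mu\in\mathcal{M}}\rb_\mu\;+\;\#\bigl(\mathcal{M}_{erg}(X,T)\setminus\mathcal{M}\bigr),
\]
from which (3) follows by dropping the second summand and the nontrivial inequality of (4) by rearranging; the inequality $\#\mathcal{M}\leq\#\mathcal{M}_{erg}(X,T)$ is the obvious inclusion. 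The only step requiring care is the lcm behaviour of the $\rb$'s in (2), but that reduces to an elementary case analysis on $v_q(b_i)$ versus $v_q(p_n)$.
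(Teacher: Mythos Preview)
Your proof is correct and follows essentially the same approach as the paper's. The only cosmetic difference is in part (1): you invoke Corollary~\ref{cor:condwithpartition} to obtain a partition of $I_\mu$ into $\rb$ atoms, whereas the paper appeals directly to Lemma~\ref{lemm:indice_nivelmenor_sobre}~(2), arguing that each preimage $\Psi_{m,n,t_2}^{-1}(k)$ must intersect $I_\mu$ (since its contribution to the sum limits to $1/\rb>0$, while vertices outside $I_\mu$ contribute nothing by \eqref{eq:p_tiende_a_cero_fuera_de_I}). Since Corollary~\ref{cor:condwithpartition} is itself derived from that lemma, the two arguments are the same in substance; just be careful to note that the nonemptiness of all $\rb$ atoms is only guaranteed for \emph{large enough} $m,n$, which suffices. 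Parts (2)--(4) match the paper's argument line for line, including the $\operatorname{lcm}$ identity and the disjointness-based counting.
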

\medskip
The proof of this corollary will be given at the end of Section \ref{mainproofs}.
\medskip

Fix an ergodic measure $\mu$. To understand better the last corollary let us suppose the $p_{n}$'s are powers of the same prime number. In this case, for all integers $b$ such that $\lambda=\exp(2i\pi/b)$ is a non continuous eigenvalue for $\mu$ one has $(b,p_{n})=1$  and parts (1) and (2) of last corollary tell us that there is a unique 
$b=\rb_{\mu} \leq \#I_{\mu} \leq d$ which is maximal in $\rB_{\mu}$. All other non continuous eigenvalues for $\mu$ 
are powers of $\lambda$.  If $\rB_{\mu}$ is empty, no non continuous eigenvalues exist for $\mu$. Notice that property (1) implies that we need at least $\rb_{\mu}$ vertices to have the non continuous eigenvalue $\lambda$. Since $I_{\mu} \cap I_{\nu}=\emptyset$ for different ergodic measures $\rb_{\nu} \leq d-\#I_{\mu} \leq d -\rb_{\mu}$. We will see in some examples of Section \ref{examples} that these inequalities can be strict. 
\medskip

In the particular case when $\rb_{\mu}=d$ for some ergodic measure $\mu$ we get the following corollary.

\begin{coro}
\label{coro:ue}
Consider $\ld=\exp{(2 i \pi a/b)}$, with $a,b$ integers such that $(a,b)=1$ and $b/(b,p_n)=d$ for all $n$ large enough. Then $\ld$ is a non continuous eigenvalue of $(X,T)$ for the invariant measure $\mu$ if and only if
for all $t_1, t_{2} \in \{1,\ldots,d\}$

\begin{equation}\label{eq:coro7}
\frac{\abs{\sum_{s\in\sufijocbb{m}{n}{t_1}{t_2}}\lambda^{-p_ms}}}{q_{m,n}}\xrightarrow[m,n\to\infty]{}\frac{1}{d}
\end{equation}
uniformly in $m,n \in \NN$ with $m<n$.
If $\lambda$ is an eigenvalue, then:
\begin{enumerate}
\item\label{uergodic}
the system $(X,T)$ is uniquely ergodic and $\mu$ is the unique invariant measure,
\item
\label{theo:principal1}
for all $t\in \{1,\ldots, d\}$, $\ds\lim_{n\to\infty}\mu(\tau_n=t)=1/d$.
\end{enumerate}
\end{coro}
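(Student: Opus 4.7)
The plan is to derive Corollary \ref{coro:ue} as the extremal case $\rb = d$ of Theorem \ref{theo:principal}, combined with Corollaries \ref{cor:condwithpartition} and \ref{cor:alternative}. The extremality will force $I_\mu = \{1,\ldots,d\}$, from which unique ergodicity and the asymptotic equidistribution of $\mu(\tau_n = \cdot)$ follow rather quickly.

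For the equivalence, first assume $\lambda$ is a non continuous eigenvalue of $(X,T)$ for $\mu$. By Corollary \ref{cor:alternative}(1), $d = \rb \leq \#I_{\mu} \leq d$, hence $I_\mu = \{1,\ldots,d\}$. Applying Corollary \ref{cor:condwithpartition} with partitions of $I_\mu = V_m$ of cardinality $d$, each such partition is necessarily into singletons, and the condition there becomes exactly \eqref{eq:coro7}. Conversely, assume \eqref{eq:coro7} for every $t_1,t_2 \in \{1,\ldots,d\}$; summing over $t_1 \in V_m$ gives
\begin{equation*}
\sum_{t_1 \in V_m} \frac{\abs{\sum_{s\in\sufijocbb{m}{n}{t_1}{t_2}}\lambda^{-p_m s}}}{q_{m,n}} \xrightarrow[m,n\to\infty]{} 1
\end{equation*}
uniformly, which is condition (2) of Theorem \ref{theo:principal}; condition (1) is part of the hypothesis, so $\lambda$ is a non continuous eigenvalue.

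For part \ref{uergodic}, the identity $I_\mu = \{1,\ldots,d\}$ obtained above says the system has exact finite rank in the sense recalled after the definition of clean diagram, hence is uniquely ergodic. For part \ref{theo:principal1}, the trivial bound $\abs{\sum_s \lambda^{-p_m s}} \leq \#\sufijocbb{m}{n}{t_1}{t_2} = \matrizp{m}{n}{t_1}{t_2}$ applied to \eqref{eq:coro7} yields $\liminf \matrizp{m}{n}{t_1}{t_2}/q_{m,n} \geq 1/d$ uniformly in $t_1,t_2$. On the other hand, the Toeplitz identity $h_n = h_m P_{m,n}$ with $h_m = (p_m,\ldots,p_m)$ gives $\sum_{t_1 \in V_m} \matrizp{m}{n}{t_1}{t_2} = q_{m,n}$, so the $d$ ratios $\matrizp{m}{n}{t_1}{t_2}/q_{m,n}$ are nonnegative, sum to $1$, and each has $\liminf \geq 1/d$; they must therefore all converge uniformly to $1/d$. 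Combining this with $\mu_m = P_{m,n}\mu_n$ and the normalization $\sum_{t_2} \mu_n(t_2) = 1/p_n$ (which follows from $\sum_{t_2} p_n\mu_n(t_2) = 1$), a short computation gives $|p_m \mu_m(t_1) - 1/d| \to 0$, which is \ref{theo:principal1}.

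The only mildly delicate point is keeping the convergence in \eqref{eq:coro7} uniform throughout, so that the reduction of $\matrizp{m}{n}{t_1}{t_2}/q_{m,n}$ to $1/d$ and the subsequent limit for $\mu(\tau_m = t_1)$ both inherit the required uniformity in the vertices; the remaining steps are straightforward applications of results proved earlier in the paper.
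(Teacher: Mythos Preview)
Your proof is correct and follows the same overall strategy as the paper: use Corollary~\ref{cor:alternative}(1) to force $I_\mu=\{1,\ldots,d\}$, then read off \eqref{eq:coro7} from the singleton case of Corollary~\ref{cor:condwithpartition}. The only differences are local shortcuts: you sum \eqref{eq:coro7} over $t_1$ directly into condition~(2) of Theorem~\ref{theo:principal} for the converse (the paper routes this through Corollary~\ref{cor:condwithpartition}); you get unique ergodicity from the exact finite rank observation $I_\mu=\{1,\ldots,d\}$ (the paper cites Corollary~\ref{cor:alternative}); and for part~(\ref{theo:principal1}) you derive $\matrizp{m}{n}{t_1}{t_2}/q_{m,n}\to 1/d$ from the elementary inequality $|\sigma_{m,n}|\le P_{m,n}$ together with the column-sum constraint $\sum_{t_1}\matrizp{m}{n}{t_1}{t_2}=q_{m,n}$, whereas the paper invokes Lemma~\ref{lemm:suma_matriz_coef_equivalentes}(2). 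These are equivalent and your versions are slightly more self-contained.
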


Condition \eqref{eq:coro7} and statement \eqref{uergodic} follow almost directly from Corollaries \ref{cor:condwithpartition} and \ref{cor:alternative}. Nevertheless, we provide a complete proof of the corollary at the end of Section \ref{mainproofs}.
\medskip

An analogous result to Corollary \ref{coro:ue} can be obtained
when the system is uniquely ergodic and $b/(b,p_n)=\#I_{\mu}$ for all $n$ large enough. The statement is obtained by replacing $d$ by $\#I_{\mu}$ and the set $\{1,\ldots,d\}$ by $I_{\mu}$ in the last corollary.

\section{Main technical lemmas}
In this section we will provide the main ingredients we need to prove Theorem \ref{theo:principal} and its corollaries.

\subsection{A geometric lemma}

The next lemma can be stated in a much more general situation and its proof follows from general facts of convex analysis. Nevertheless, since we consider a particular case, we provide a simple self-contained proof. 

\begin{lemm}\label{lemm:geometrico}
Let $N$ be a positive integer. Then there exists a constant $C$ such that for any convex combination 
$w=\sum_{j=0}^{N-1} \alpha_{j}\xi^{j}$ of the $N$-th roots of unity $1,\xi,\ldots,\xi^{N-1}$ verifying $1-\varepsilon<\abs{w}\leq 1$ for some $\varepsilon >0$ one has
$$1-C\varepsilon< \alpha_{i}\leq 1$$ for some $0\leq i\leq N-1$.
\end{lemm}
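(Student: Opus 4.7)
The plan is to exploit the fact that the convex hull of the $N$-th roots of unity is a regular $N$-gon inscribed in the unit circle, so $|w|=1$ can occur only at the vertices, and to make this quantitative. Concretely, I would introduce the constant
$$c_N = \min_{1\leq \ell \leq N-1}\bigl(1-\cos(2\pi \ell/N)\bigr)>0,$$
and aim to show that one can take $C = 2/c_N$.

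First I would expand $|w|^2$. Since $w=\sum_j \alpha_j \xi^j$ and $\bar w = \sum_k \alpha_k \xi^{-k}$, taking the real part of the product gives
$$|w|^2 = \sum_{j,k}\alpha_j\alpha_k \cos\bigl(2\pi(j-k)/N\bigr).$$
Combining this with $1 = \bigl(\sum_j \alpha_j\bigr)^2 = \sum_{j,k}\alpha_j\alpha_k$ yields the key identity
$$1-|w|^2 = 2\sum_{j<k}\alpha_j\alpha_k\bigl(1-\cos(2\pi(k-j)/N)\bigr)\geq 2c_N\sum_{j<k}\alpha_j\alpha_k = c_N\Bigl(1-\sum_j\alpha_j^2\Bigr).$$

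Next I would use the hypothesis: $|w|>1-\varepsilon$ gives $|w|^2>1-2\varepsilon+\varepsilon^2\geq 1-2\varepsilon$, so $1-|w|^2<2\varepsilon$. Plugging this into the inequality above yields
$$1-\sum_j \alpha_j^2 \;\leq\; \frac{2\varepsilon}{c_N}.$$

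Finally, letting $i$ be an index achieving $\alpha_i = \max_j \alpha_j$, the simple estimate $\sum_j \alpha_j^2 \leq \alpha_i \sum_j \alpha_j = \alpha_i$ gives
$$\alpha_i \;\geq\; \sum_j \alpha_j^2 \;\geq\; 1-\frac{2\varepsilon}{c_N},$$
so $C = 2/c_N$ does the job. I do not expect any real obstacle: the only subtle point is checking that the cross terms in $|w|^2$ produce a strictly positive multiple of $\sum_{j<k}\alpha_j\alpha_k$, which is guaranteed because $1-\cos(2\pi \ell/N)>0$ for $1\le \ell \le N-1$; the rest is a one-line manipulation.
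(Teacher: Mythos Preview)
Your argument is correct and complete. It is, however, a genuinely different route from the paper's proof. The paper isolates an index $i$ with $\alpha_i\ge 1/N$, writes $w=\alpha_i\xi^i+\beta\zeta$ with $\beta=1-\alpha_i$ and $\zeta$ in the convex hull of the remaining roots, and then bounds $|w|$ from above by the value of the affine map $F(z)=\alpha_i\xi^i+\beta z$ at an adjacent root $\xi^{i\pm 1}$; computing $|F(\xi^{i+1})|$ explicitly gives $1-\varepsilon<1-\beta(1-\beta)(1-\cos 2\pi/N)$, and the a priori bound $1-\beta\ge 1/N$ yields $C=N/(1-\cos 2\pi/N)$. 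Your approach sidesteps this geometric step entirely: expanding $|w|^2$ turns the problem into the identity $1-|w|^2=\sum_{j\ne k}\alpha_j\alpha_k(1-\cos 2\pi(j-k)/N)$, after which the estimate $\sum_j\alpha_j^2\le\max_j\alpha_j$ finishes things in one line. Your method is shorter, uses only algebra, and gives the better constant $C=2/(1-\cos 2\pi/N)$; the paper's argument is more geometric in flavor and perhaps makes the picture (closeness to a vertex of the polygon) more visible, but at the cost of a slightly weaker bound.
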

\begin{proof}
A proof is given only in the case when $\abs{w}\neq 1$. Write $w$ in the following way $$w=\alpha_{i}\xi^{i}+\beta\zeta,$$ where
$\alpha_{i}\geq 1/N$ and $\alpha_{i}+\beta=1$ (note that $\zeta$ belongs to the convex hull of the $N$-th roots of unity different from
$\xi^{i}$).
The function $F(z)=\alpha_{i}\xi^{i}+\beta z$ has maximal absolute value at $z\in \{\xi^{i-1}, \xi^{i+1}\}$ when restricted to the convex
hull of the $N$-th roots of unity different from $\xi^{i}$. Hence

\begin{eqnarray*}
  1-\varepsilon &<& \abs{w}\ \; \; (=\abs{F(\zeta)}) \\
   &\leq& \abs{F(\xi^{i+1})} \\
   &= & \abs{1+\beta(\xi-1)} \\
   &= & \sqrt{1-2\beta(1-\beta)(1-\cos{2\pi/N})}\\
   &\leq & {1-\beta(1-\beta)(1-\cos{2\pi/N})}\\
   &\leq& 1-\beta\left(\frac{1-\cos(2\pi/N)}{N}\right)
\end{eqnarray*}
and $$\alpha_{\ell}>1-\left(\frac{N}{1-\cos(2\pi/N)}\right)\varepsilon.$$
\end{proof}

\subsection{Special telescoping of a Bratteli-Vershik system}

At some point of the proof of Theorem \ref{theo:principal} we will need to telescope an ordered Bratteli diagram in the following particular way. 
 
\begin{lemm}
\label{lemma:contraction-partition}
Let $B=\left(V,E,\preceq \right)$ be an ordered Bratteli diagram such that 
$\#V_n = d$ for all $n\geq 1$ and identify $V_{n}$ with $\{1,\ldots,d \}$. 
For all $1\leq m<n $ and $t\in \{1,\ldots,d \}$ consider $(\mathcal{G}_{m,n,t},\leq_{m,n,t})$, where 
$\mathcal{G}_{m,n,t}$ is a partition of $V_m$ and $\leq_{m,n,t} $ is a total ordering on the atoms of  
$\mathcal{G}_{m,n,t}$.
Then, there exists a strictly increasing sequence $(n_k)_{k\geq 0}$ in $\NN$
such that for all $k_0\geq 0$, $k> k_0$ and $t\in \{1,\ldots,d \}$ we have 
$$(\mathcal{G}_{n_{k_0},n_k,t} ,\leq_{n_{k_0},n_k,t})  = (\mathcal{G}_{n_{k_0},n_{k+1},t} , \leq_{n_{k_0},n_{k+1},t} ).$$
\end{lemm}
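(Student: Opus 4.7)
The plan is to combine a pigeonhole argument with a diagonal extraction, using the key fact that the set of possible decorations $(\mathcal{G}_{m,n,t},\leq_{m,n,t})_{t \in \{1,\ldots,d\}}$ lives in a finite set: there are only finitely many partitions of a fixed $d$-element set, each admits only finitely many total orderings on its atoms, and the indexing over $t \in \{1,\ldots,d\}$ is also finite. Call this finite set $\mathcal{S}$. For each $m\geq 1$ I define a map $\Phi_{m}\colon \{n\in \NN : n>m\} \to \mathcal{S}$ by $\Phi_{m}(n) = \bigl((\mathcal{G}_{m,n,t},\leq_{m,n,t})\bigr)_{t\in\{1,\ldots,d\}}$. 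The statement to prove is that one can extract a subsequence $(n_{k})_{k\geq 0}$ so that $\Phi_{n_{k_{0}}}$ is constant on $\{n_{k} : k>k_{0}\}$ for every $k_{0}\geq 0$.

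I would build $(n_{k})_{k\geq 0}$ inductively, together with a nested decreasing family of infinite subsets $A_{0}\supset A_{1}\supset A_{2}\supset\cdots$ of $\NN$. Set $n_{0}=1$. Since $\mathcal{S}$ is finite, the pigeonhole principle yields some $s_{0}\in\mathcal{S}$ and an infinite $A_{0}\subset\{n\in\NN : n>n_{0}\}$ such that $\Phi_{n_{0}}(n)=s_{0}$ for every $n\in A_{0}$. Then set $n_{1}=\min A_{0}$. Inductively, given $A_{k}$, put $n_{k+1}=\min A_{k}$ and apply pigeonhole again to pick an infinite $A_{k+1}\subset A_{k}\setminus\{n_{k+1}\}$ on which $\Phi_{n_{k+1}}$ is identically equal to some $s_{k+1}\in\mathcal{S}$. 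By construction $(n_{k})_{k\geq 0}$ is strictly increasing.

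To verify the required stabilization, I would fix $k_{0}\geq 0$ and argue by a short induction that $n_{k}\in A_{k_{0}}$ for every $k>k_{0}$: indeed $n_{k_{0}+1}=\min A_{k_{0}}\in A_{k_{0}}$ by definition, and for $k>k_{0}+1$ one has $n_{k}\in A_{k-1}\subset A_{k_{0}}$ by nesting. Since $\Phi_{n_{k_{0}}}$ is constant equal to $s_{k_{0}}$ on $A_{k_{0}}$, this forces $\Phi_{n_{k_{0}}}(n_{k})=s_{k_{0}}$ for every $k>k_{0}$. Reading off coordinate $t$ of $s_{k_{0}}$ gives exactly $(\mathcal{G}_{n_{k_{0}},n_{k},t},\leq_{n_{k_{0}},n_{k},t})=(\mathcal{G}_{n_{k_{0}},n_{k+1},t},\leq_{n_{k_{0}},n_{k+1},t})$ for all $k>k_{0}$ and all $t\in\{1,\ldots,d\}$, which is the conclusion.

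There is no genuine obstacle here; the argument is a textbook diagonal extraction. The only point that deserves care is the correct nesting $A_{k+1}\subset A_{k}$ at each step, so that stabilization obtained at level $n_{k_{0}}$ automatically propagates to all later $n_{k}$. This is ensured by always choosing the next pigeonhole subset \emph{inside} the previously constructed one, as above.
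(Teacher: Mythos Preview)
Your proof is correct and follows essentially the same approach as the paper: finiteness of the set of totally ordered partitions of $\{1,\ldots,d\}$ (taken over all $t$) combined with a pigeonhole and diagonal extraction argument. The paper phrases the diagonal step via nested subsequences $(n^{(j)}_k)_{k\geq 0}$ rather than nested infinite sets $A_j$, but the two formulations are equivalent and your verification of the nesting is slightly more explicit than the paper's.
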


\begin{proof}
It suffices to remark that there are finitely many such structures on $\{ 1, \ldots, d \}$
(partitions endowed with total orderings). Then, one proceeds by induction using the pigeon hole principle. 

Let us give some details. Take $n_0 = 1$.
By the pigeon hole principle, there exists a strictly increasing sequence $(n^{(0)}_k)_{k\geq 0}$, 
with $n^{(0)}_0>n_0$, such that for all $k\geq 0$ and  $t\in \{ 1, \ldots , d\}$ we have 
$$(\mathcal{G}_{n_0,n^{(0)}_k ,t} , \leq_{n_0,n^{(0)}_k ,t}) = (\mathcal{G}_{{n_0},n^{(0)}_{k+1},t} , \leq_{{n_0},n^{(0)}_{k+1},t} ).$$

Now, let $n_1 = n^{(0)}_0$.
Using the same argument, there exists a strictly increasing subsequence $(n^{(1)}_k)_{k\geq 0}$ of $(n^{(0)}_k)_{k\geq 0}$, with $n^{(1)}_0>n^{(0)}_0$, such that for all $k\geq 0$ and $t\in \{ 1, \ldots , d\}$ we have $(\mathcal{G}_{n_1,n^{(1)}_k , t} , \leq_{n_1,n^{(1)}_k , t} )  = (\mathcal{G}_{{n_1},n^{(1)}_{k+1},t} , \leq_{{n_1},n^{(1)}_{k+1},t} )$.
Observe that we also have $(\mathcal{G}_{n_0,n^{(1)}_k , t} , \leq_{n_0,n^{(1)}_k , t} ) = 
(\mathcal{G}_{{n_0},n^{(1)}_{k+1},t} , \leq_{{n_0},n^{(1)}_{k+1},t} ) $ for all $k \geq 0$ and $t\in \{ 1, \ldots , d\}$ by construction.
Proceeding in this way we obtain the desired sequence $(n_k)_{k\geq 0}$.
\end{proof}

\subsection{Uniform lower bound for consecutive  towers in $I_{\mu}$}

\begin{lemm}
\label{lemma:low-independence}
Let $(X,T)$ be a minimal Cantor system with a Toeplitz type proper and clean Bratteli-Vershik representation of rank $d$ and  $\mu$ be an ergodic probability measure. Let $(q_n;n\geq 1)$ be its characteristic sequence. For all $m\geq 1$, 
there exists $n_0 > m$ such that for all $n\geq n_0$ and $t_1,t_2\in I_{\mu}$ 
$$
\frac{\matrizp{m}{n}{t_{1}}{t_{2}}}{q_{m,n}} \geq \frac{\delta}{3},
$$
where $\delta >0$ is such that $\mu(\tau_{n}=t)\geq \delta$ for any $t \in I_{\mu}$ and $n \in \NN$ (coming from the cleanliness property of the diagram). 
\end{lemm}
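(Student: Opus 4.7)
My plan is to reformulate the inequality as a lower bound on a conditional probability and then establish a convergence. The Toeplitz identities $\mu(\tau_m=t_1,\tau_n=t_2)=p_m P_{m,n}(t_1,t_2)\mu_n(t_2)$ and $\mu(\tau_n=t_2)=p_n\mu_n(t_2)$ immediately give
\[
\frac{P_{m,n}(t_1,t_2)}{q_{m,n}}=\mu(\tau_m=t_1\mid\tau_n=t_2),
\]
so the lemma is equivalent to a uniform lower bound on $\mu(\tau_m=t_1\mid\tau_n=t_2)$ for $t_1,t_2\in I_\mu$ and $n\geq n_0$.

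Fix $m\geq 1$. I will prove the stronger asymptotic statement
\[
\mu(\tau_m=t_1\mid\tau_n=t_2)\xrightarrow[n\to\infty]{}\mu(\tau_m=t_1),
\]
uniformly in $t_1,t_2\in I_\mu$. Since by cleanliness $\mu(\tau_m=t_1)\geq\delta$ and $I_\mu$ is finite (indeed $|I_\mu|\leq 1/\delta$), such a convergence yields an $n_0>m$ such that for every $n\geq n_0$ and every $t_1,t_2\in I_\mu$ one has $\mu(\tau_m=t_1\mid\tau_n=t_2)>\delta/2>\delta/3$, proving the lemma with room to spare.

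To obtain the convergence I exploit the Toeplitz tower structure via the pointwise ergodic theorem. For any $x\in B_n(t_2)$, the orbit $\{T^{-k}x:0\leq k<p_n\}$ traces exactly the tower $t_2$ at level $n$, and a direct count using \eqref{eq:base_en_bases_superiores} gives
\[
\frac{1}{p_n}\,\#\{0\leq k<p_n:\tau_m(T^{-k}x)=t_1\}=\frac{P_{m,n}(t_1,t_2)}{q_{m,n}},
\]
a value independent of the particular $x\in B_n(t_2)$. Thus $P_{m,n}(t_1,t_2)/q_{m,n}$ is the Birkhoff average of $\mathbf 1_{\{\tau_m=t_1\}}$ over a window of length $p_n$ starting in $B_n(t_2)$; since by cleanliness any generic orbit visits $\{\tau_n=t_2\}$ with frequency at least $\delta$, such averages converge to $\mu(\tau_m=t_1)$ as $p_n\to\infty$ along the subsequence of levels at which the orbit enters $B_n(t_2)$.

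The main obstacle is promoting this subsequential convergence to the full sequence $n\to\infty$ uniformly in $(t_1,t_2)\in I_\mu\times I_\mu$. I will handle this by a compactness argument on the finite-dimensional space of column-stochastic matrices $(P_{m,n}/q_{m,n})_n$: any accumulation point $\pi$ together with any accumulation point $\alpha$ of the distributions $\mu(\tau_n=\cdot)$ must satisfy $\sum_{t}\pi(t_1,t)\alpha(t)=\mu(\tau_m=t_1)$, and by cleanliness $\alpha$ is supported on $I_\mu$. Combining this with the Birkhoff-based subsequential convergence obtained above and the disjointness $I_\mu\cap I_\nu=\emptyset$ for distinct ergodic measures (which constrains the admissible limits), one forces every accumulation point of $(P_{m,n}(t_1,t_2)/q_{m,n})_n$ to equal $\mu(\tau_m=t_1)$, uniformly in $t_1,t_2\in I_\mu$, completing the argument.
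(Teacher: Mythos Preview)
Your reformulation $P_{m,n}(t_1,t_2)/q_{m,n}=\mu(\tau_m=t_1\mid\tau_n=t_2)$ and the identification of this quantity as a Birkhoff average of $\mathbf 1_{\{\tau_m=t_1\}}$ over the tower $t_2$ are correct and are exactly the ingredients the paper uses. The plan to prove the stronger convergence $\mu(\tau_m=t_1\mid\tau_n=t_2)\to\mu(\tau_m=t_1)$ is also sound in spirit; in fact the paper's estimate, once $\varepsilon$ is sent to $0$, yields precisely this.

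The gap is in your compactness argument. The identity $\sum_{t}\pi(t_1,t)\alpha(t)=\mu(\tau_m=t_1)$ with $\alpha$ supported on $I_\mu$ does \emph{not} determine the individual columns $\pi(\cdot,t_2)$; many column-stochastic matrices average to the same vector. Your appeal to the disjointness $I_\mu\cap I_\nu=\emptyset$ is a red herring: nothing in that condition constrains $\pi(t_1,t_2)$ for $t_2\in I_\mu$. What you actually need, and do not supply, is a mechanism to link an \emph{arbitrary} subsequence $n_j$ (along which $\pi$ is realized as a limit) to the particular subsequence along which your generic point $x$ sits in tower $t_2$. Concretely, one must argue that since $\mu(\tau_{n_j}=t_2)\geq\delta$ for every $j$, the reverse Fatou lemma gives $\mu(\{\tau_{n_j}=t_2\ \text{i.o.}\})\geq\delta>0$, so some two-sided generic $x$ has $\tau_{n_j}(x)=t_2$ infinitely often; along that sub-subsequence the tower average forces $\pi(t_1,t_2)=\mu(\tau_m=t_1)$. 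You also need to justify that for a two-sided generic $x$ the average over the tower window $[r_n(x)-p_n+1,\,r_n(x)]$ (which may be almost entirely in the past, the future, or split) converges to $\mu(\tau_m=t_1)$; this requires splitting into forward and backward Birkhoff pieces and a small case analysis on $r_n(x)/p_n$.

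The paper avoids all of this by a more direct device: Egorov's theorem produces a set $A_\varepsilon$ of measure $>1-\varepsilon$ on which the Birkhoff convergence is \emph{uniform} for lengths $\geq M_0$, and since the union $\bigcup_{j\le \varepsilon p_n/\delta} T^{-(p_n+j-1)}B_n(t_2)$ has measure $>\varepsilon$, it must meet $A_\varepsilon$. A single point in this intersection, after a shift of at most $\varepsilon p_n/\delta$, traverses the full tower $t_2$, and its (uniformly controlled) Birkhoff average over length $p_n+j$ yields the bound directly. This sidesteps both the subsequence bookkeeping and the two-sided ergodic theorem.
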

\begin{proof}
Fix $m\geq 1$ and $0< \epsilon < \delta^2/3$.
From Egorov's theorem and the ergodic theorem, there exists a measurable subset 
$A_\epsilon$ with $\mu (A_\epsilon )\geq 1-\epsilon$ and a positive integer $M_0$ such that 
for all $x\in A_\epsilon$ and $M\geq M_0$ we have 

\begin{equation}
\label{Egorov}
\left |\frac{1}{M}\sum_{k=0}^{M-1} 1_{\{ \tau_m = t_1\}} (T^k x) - \mu ( \tau_m = t_1) \right |  <\epsilon .
\end{equation}

Let $n > m$ be such that $p_n\geq M_0$ (recall that $p_{n}$ is the number of paths from $v_{0}$ to any vertex of $V_{n}$). There exists $x\in A_\epsilon \cap T^{-p_n-j+1} B_n(t_2)$ for some $0\leq j\leq \lfloor\frac{\epsilon p_n}{\delta}\rfloor < p_{n}$.
Indeed, 
$$\mu\left(\bigcup_{j=0}^{\lfloor\frac{\epsilon p_n}{\delta} \rfloor} T^{-(p_{n}+j-1)}B_n({t_{2}})\right )=
\left (\left \lfloor\frac{\epsilon p_n}{\delta} \right \rfloor+1 \right ) \mu(B_n({t_2}) ) > \frac{\epsilon}{\delta} \mu(\tau_{n}=t_{2}) \geq \epsilon,$$
since $\mu(\tau_{n}=t_{2})= p_{n}\mu(B_n({t_2}))$ and $t_{2} \in I_{\mu}$. Hence, 
$\bigcup_{j=0}^{\lfloor\frac{\epsilon p_n}{\delta} \rfloor} T^{-(p_{n}+j-1)}B_n({t_{2}})$ must intersect $A_{\epsilon}$. 
Notice that the iterates $T^{j}x,\ldots, T^{j+p_{n}-1}x$ cross completely tower $t_{2} \in V_{n}$, from the lowest to the highest level. So those iterates enter to tower $t_{1} \in V_{m}$ exactly ${\matrizp{m}{n}{t_{1}}{t_{2}}} p_m$ times.

Then, since $t_{1} \in I_{\mu}$, $p_{n}+j \geq M_{0}$ and $x \in A_{\epsilon}$, we can use \eqref{Egorov} to get 
\begin{align*}
\delta - \epsilon & \leq \mu ( \tau_m = t_1 ) - \epsilon \leq
\frac{1}{p_n+j}\sum_{k=0}^{p_n+j-1} 1_{\{ \tau_m = t_1\}} (T^k x) \\
& \leq \frac{j}{p_n+j} + \frac{1}{p_n+j}\sum_{k=0}^{p_n-1} 1_{\{ \tau_m = t_1\}} (T^k (T^{j}x))\\
& \leq \frac{\epsilon}{\delta} + \frac{{\matrizp{m}{n}{t_{1}}{t_{2}}} p_m}{p_n+j} 
\leq 
\frac{\epsilon}{\delta} + \frac{{\matrizp{m}{n}{t_{1}}{t_{2}}} }{q_{m,n}} \leq
\frac{\delta}{3}+ \frac{{\matrizp{m}{n}{t_{1}}{t_{2}}} }{q_{m,n}},
\end{align*}
which ends the proof.
\end{proof}

\subsection{Equivalent conditions for Theorem \ref{theo:principal}}

We follow the same notations as in Theorem \ref{theo:principal}: $\lambda=\exp(2i\pi a/b)$, with $(a,b)=1$,  
and $\rb$ is the limit in $n$ of $b/(b,p_n)$, which is attained from some large $n \in \NN$. 
In the sequel, equality modulo $\rb$ and $b$ will be written $\equiv_\rb$ and $\equiv_b$ respectively.

To make  the text lighter, we need to introduce some extra notations. 
For $t_1, t_{2}\in \{1,\ldots,d\}$, $k \in \{0,\ldots,\rb-1\}$ and integers $1\leq m <n$, set
\medskip

\begin{eqnarray}
\label{eq:def_suma}\suma{m}{n}{t_1}{t_2} &=& \sum\nolimits_{s\in\sufijocbb{m}{n}{t_1}{t_2}}\ld^{-p_ms} \textrm{\ \; \; \; }\\
   \label{eq:def_coef}\sumacoef{m}{n}{t_1}{t_2}{k} &=& \#\set{s\in\sufijocbb{m}{n}{t_1}{t_2}\talque \ s\equiv_{\rb} k }
\end{eqnarray}
\medskip

Notice that for $s,s'\in\sufijocbb{m}{n}{t_1}{t_2}$, $\ld^{-p_m s}=\ld^{-p_m s'}$ if and only if $s\equiv_{\rb} s'$. Then,
\begin{eqnarray}
  \label{eq:suma_en_sumacoef}\suma{m}{n}{t_1}{t_2} &=& \sum_{k=0}^{\rb-1}\ld^{-p_m k}\sumacoef{m}{n}{t_1}{t_2}{k}, \\
  \label{eq:matriz_en_sumacoef}\matrizp{m}{n}{t_1}{t_2} &=& \sum_{k=0}^{\rb-1}\sumacoef{m}{n}{t_1}{t_2}{k}, \\
  \label{eq:desigualdad_suma_matriz}\abs{\suma{m}{n}{t_1}{t_2}}&\leq & \matrizp{m}{n}{t_1}{t_2}, \\
  \label{eq:descomposicion_qmn}q_{m,n} &=& \sum_{k=0}^{\rb-1}\sum_{t_{1}\in V_m}\sumacoef{m}{n}{t_{1}}{t_{2}}{k}, \\
  \label{eq:sumacoef_torres_todas}\sum_{t_{1}\in V_m}\sumacoef{m}{n}{t_{1}}{t_2}{k} &=& \left\lfloor\frac{q_{m,n}}{\rb}\right\rfloor \textrm{\ \; or\ \; } \left\lfloor\frac{q_{m,n}}{\rb}\right\rfloor + 1.
\end{eqnarray}
\medskip

\begin{lemm}\label{lemm:suma_matriz_coef_equivalentes}
For any $t_2 \in \{1, \ldots, d\}$ the following conditions are equivalent:

(1) $\ds \sum_{t_{1}\in V_m}\frac{\abs{\suma{m}{n}{t_{1}}{t_2}}}{q_{m,n}}\xrightarrow[m,n\to\infty]{} 1 \textrm{\; \; \;  uniformly in $m,n \in \NN$ with $m<n$\;}$ (this is condition \eqref{mainsecond} of Theorem \ref{theo:principal} stated for any $t_{2}$).

\smallskip

(2) For all $t_{1}\in \{ 1,\ldots, d\}$,
\smallskip

$$\frac{\abs{\suma{m}{n}{t_{1}}{t_2}}}{q_{m,n}}-\frac{\matrizp{m}{n}{t_{1}}{t_2}}{q_{m,n}}\xrightarrow[m,n\to\infty]{} 0 \textrm{\; \; \; \;}$$
uniformly in $m,n \in \NN$ with $m<n$.
\smallskip

(3) For all integers $1\leq m<n$ and $t_{1} \in \{1,\ldots,d\}$, there exists $\indice{k}{m}{n}{t_{1}}{t_2}$ in $\{0,\ldots,\rb-1\}$  such that
\smallskip

$$\frac{\sumacoef{m}{n}{t_{1}}{t_2}{\indice{k}{m}{n}{t_{1}}{t_2}}}{q_{m,n}}-\frac{\matrizp{m}{n}{t_{1}}{t_2}}{q_{m,n}}\xrightarrow[m,n\to\infty]{} 0
\textrm{\; \; \; \;}$$
uniformly in $m,n \in \NN$ with $m<n$.
\end{lemm}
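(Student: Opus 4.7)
My plan is to establish $(1)\Leftrightarrow(2)$ and $(3)\Rightarrow(2)$ by direct computation, and then prove the harder implication $(2)\Rightarrow(3)$ via the geometric Lemma~\ref{lemm:geometrico}. For $(1)\Leftrightarrow(2)$, the key observation is the identity $\sum_{t_{1}\in V_{m}}\matrizp{m}{n}{t_{1}}{t_{2}}=q_{m,n}$: every path in $E_{m,n}$ ending at $t_{2}\in V_{n}$ starts at exactly one vertex of $V_{m}$. Condition $(1)$ is then equivalent to
$$\sum_{t_{1}\in V_{m}}\frac{\matrizp{m}{n}{t_{1}}{t_{2}}-\abs{\suma{m}{n}{t_{1}}{t_{2}}}}{q_{m,n}} \xrightarrow[m,n\to\infty]{} 0$$
uniformly in $m,n$. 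By \eqref{eq:desigualdad_suma_matriz} every summand is non-negative and there are at most $d$ of them, so the sum tends to $0$ uniformly if and only if each summand does, which is $(2)$.

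For $(3)\Rightarrow(2)$, pick $k=\indice{k}{m}{n}{t_{1}}{t_{2}}$ supplied by $(3)$. Splitting \eqref{eq:suma_en_sumacoef} and using \eqref{eq:matriz_en_sumacoef} yields
$$\left|\suma{m}{n}{t_{1}}{t_{2}}-\lambda^{-p_{m}k}\sumacoef{m}{n}{t_{1}}{t_{2}}{k}\right|\leq \sum_{k'\neq k}\sumacoef{m}{n}{t_{1}}{t_{2}}{k'}=\matrizp{m}{n}{t_{1}}{t_{2}}-\sumacoef{m}{n}{t_{1}}{t_{2}}{k}.$$
Since $|\lambda^{-p_{m}k}|=1$, the reverse triangle inequality gives $\bigl|\abs{\suma{m}{n}{t_{1}}{t_{2}}}-\sumacoef{m}{n}{t_{1}}{t_{2}}{k}\bigr|\leq \matrizp{m}{n}{t_{1}}{t_{2}}-\sumacoef{m}{n}{t_{1}}{t_{2}}{k}$, from which $\matrizp{m}{n}{t_{1}}{t_{2}}-\abs{\suma{m}{n}{t_{1}}{t_{2}}}\leq 2(\matrizp{m}{n}{t_{1}}{t_{2}}-\sumacoef{m}{n}{t_{1}}{t_{2}}{k})$. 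Dividing by $q_{m,n}$ and invoking $(3)$ produces $(2)$.

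The main step is $(2)\Rightarrow(3)$. Fix $t_{1},t_{2}$. Since $(a,b)=1$ and $b/(b,p_{m})=\rb$ for $m$ large, $\xi_{m}=\lambda^{-p_{m}}=\exp(-2i\pi a p_{m}/b)$ has order $b/\gcd(ap_{m},b)=b/(b,p_{m})=\rb$, so it is a primitive $\rb$-th root of unity. Whenever $\matrizp{m}{n}{t_{1}}{t_{2}}>0$, by \eqref{eq:suma_en_sumacoef} and \eqref{eq:matriz_en_sumacoef} the ratio
$$w_{m,n}=\frac{\suma{m}{n}{t_{1}}{t_{2}}}{\matrizp{m}{n}{t_{1}}{t_{2}}}=\sum_{k=0}^{\rb-1}\frac{\sumacoef{m}{n}{t_{1}}{t_{2}}{k}}{\matrizp{m}{n}{t_{1}}{t_{2}}}\,\xi_{m}^{k}$$
is a convex combination of the $\rb$-th roots of unity. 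Condition $(2)$, rewritten as above, is the uniform statement $\frac{\matrizp{m}{n}{t_{1}}{t_{2}}}{q_{m,n}}(1-|w_{m,n}|)\to 0$. Given $\epsilon>0$, pick $N$ so this expression is below $\epsilon^{2}$ for all $m,n\geq N$. If $\matrizp{m}{n}{t_{1}}{t_{2}}/q_{m,n}\leq \epsilon$, any choice $\indice{k}{m}{n}{t_{1}}{t_{2}}\in\{0,\ldots,\rb-1\}$ works, since by \eqref{eq:matriz_en_sumacoef} each $\sumacoef{m}{n}{t_{1}}{t_{2}}{k}/q_{m,n}\leq \epsilon$. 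Otherwise $1-|w_{m,n}|<\epsilon$ and Lemma~\ref{lemm:geometrico} supplies an index $k\in\{0,\ldots,\rb-1\}$ and a constant $C=C(\rb)$ with $\sumacoef{m}{n}{t_{1}}{t_{2}}{k}/\matrizp{m}{n}{t_{1}}{t_{2}}>1-C\epsilon$; setting $\indice{k}{m}{n}{t_{1}}{t_{2}}=k$ yields $(\matrizp{m}{n}{t_{1}}{t_{2}}-\sumacoef{m}{n}{t_{1}}{t_{2}}{k})/q_{m,n}<C\epsilon$. The delicate point is precisely this case split: the geometric lemma only bites when $|w_{m,n}|$ is close to $1$, so we must handle separately the ``collapsing'' towers where $\matrizp{m}{n}{t_{1}}{t_{2}}/q_{m,n}$ itself is small, for which any index works trivially. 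Verifying that $\xi_{m}$ is primitive of order $\rb$ (thus realizing \emph{all} $\rb$-th roots of unity as the powers $\xi_{m}^{k}$) is essential for invoking Lemma~\ref{lemm:geometrico}.
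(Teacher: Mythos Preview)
Your argument is correct and follows essentially the paper's approach: $(1)\Leftrightarrow(2)$ from the identity $\sum_{t_{1}}\matrizp{m}{n}{t_{1}}{t_{2}}=q_{m,n}$ together with \eqref{eq:desigualdad_suma_matriz}, and $(2)\Rightarrow(3)$ via Lemma~\ref{lemm:geometrico}. The one point you should make explicit in $(2)\Rightarrow(3)$ is that the index you construct depends on the chosen $\varepsilon$, while statement~$(3)$ asks for a \emph{single} choice $\indice{k}{m}{n}{t_{1}}{t_{2}}$ for which the convergence holds; the paper resolves this with a routine diagonal argument over a sequence $\varepsilon_\ell\to 0$, and your sketch needs the same remark. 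As an aside, the paper dispenses with your case split by applying Lemma~\ref{lemm:geometrico} with $\varepsilon'=\varepsilon\,q_{m,n}/\matrizp{m}{n}{t_{1}}{t_{2}}$ directly: this yields $(\matrizp{m}{n}{t_{1}}{t_{2}}-\sumacoef{m}{n}{t_{1}}{t_{2}}{k})/q_{m,n}<C\varepsilon$ in one stroke, since even when $\varepsilon'$ is large the (then vacuous) conclusion of the lemma still gives the desired bound after multiplying back by $\matrizp{m}{n}{t_{1}}{t_{2}}/q_{m,n}$.
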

\begin{proof}
(1)$\implica$(2). We proceed by contradiction. Suppose there exists $\overline{t_{1}}\in \{1,\ldots,d\}$ such that
for infinitely many positive integers $m,n$ with $m<n$ 
\begin{equation}\label{badthing}
\frac{\matrizp{m}{n}{\overline{t_{1}}}{t_2}}{q_{m,n}}-\frac{\abs{\suma{m}{n}{{\overline  t_{1}}}{t_2}}}{q_{m,n}}\ge 2\varepsilon >0,
\end{equation}
where $\varepsilon$ is a positive real.

From (1) we have that for any large enough positive integers $m,n$ with $m<n$ 
\begin{equation}
\label{eq:auxiliar}
1-\varepsilon<\sum_{t_{1}\in V_m}\frac{\abs{\suma{m}{n}{t_{1}}{t_2}}}{q_{m,n}}<1+\varepsilon.
\end{equation}
Consider a pair of large integers $m,n$ with $m<n$ verifying \eqref{badthing}.
Then, from \eqref{eq:desigualdad_suma_matriz}, \eqref{badthing} and \eqref{eq:auxiliar} we get
$$
  1 = \sum_{t_{1}\in V_m}\frac{\matrizp{m}{n}{t_{1}}{t_2}}{q_{m,n}}
  \geq  2\varepsilon + \sum_{t_{1}\in V_m}\frac{\abs{\suma{m}{n}{t_{1}}{t_2}}}{q_{m,n}}
  \geq  1+\varepsilon,
$$
which is impossible. Condition (2) follows.
\smallskip

(2)$\implica$(3). 
Take $\varepsilon>0$. By hypothesis and \eqref{eq:desigualdad_suma_matriz}, there exists a positive integer $N$ such that for all $n>m>N$ and $t_{1}\in \{1,\ldots,d\}$
$$0\leq\frac{\matrizp{m}{n}{t_{1}}{t_2}}{q_{m,n}}-\frac{\abs{\suma{m}{n}{t_{1}}{t_2}}}{q_{m,n}}<\varepsilon.$$
Alternatively, the last inequality can be written as
$$1-\frac{\varepsilon q_{m,n}}{\matrizp{m}{n}{t_{1}}{t_2}}<\left|\sum_{k=0}^{\rb-1}\frac{\sumacoef{m}{n}{t_{1}}{t_2}{k}}{\matrizp{m}{n}{t_{1}}{t_2}}\lambda^{-kp_m}\right|\leq 1.$$
Notice that $\set{1,\lambda^{-p_m},\ldots,\lambda^{-(\rb-1)p_m}}$ is the complete set of $\rb$-th roots of unity if $m$ is large enough, and we
have a convex combination of them. Applying Lemma \ref{lemm:geometrico} we deduce that there exists $\indice{k}{m}{n}{t_{1}}{t_2} \in \{0,\ldots,\rb-1\}$ such that
$$1-\frac{C\varepsilon q_{m,n}}{\matrizp{m}{n}{t_{1}}{t_2}}<\frac{\sumacoef{m}{n}{t_{1}}{t_2}{\indice{k}{m}{n}{t_{1}}{t_2}}}{\matrizp{m}{n}{t_{1}}{t_2}}\leq 1,$$ or equivalently,
$$0\leq\frac{\matrizp{m}{n}{t_{1}}{t_2}}{q_{m,n}}-\frac{\sumacoef{m}{n}{t_{1}}{t_2}{\indice{k}{m}{n}{t_{1}}{t_2}}}{q_{m,n}}<C\varepsilon.$$

The constructed sequence depends on $\varepsilon$. Taking a sequence $(\varepsilon_{\ell};\ell\in \NN)$ tending to zero and using a diagonal process one obtains the desired sequence 
$$(\indice{k}{m}{n}{t_{1}}{t_2}; m,n\in \NN, m<n).$$ 

\smallskip

(3)$\implica$(1). Fix $\varepsilon>0$. There exists a positive integer $N$ large enough such that for any 
$n>m>N$ and $t_{1}\in \{1,\ldots,d\}$,
\begin{equation}\label{hypothesisthree}
0\leq\frac{\matrizp{m}{n}{t_{1}}{t_2}}{q_{m,n}}-\frac{\sumacoef{m}{n}{t_{1}}{t_2}{\indice{k}{m}{n}{t_{1}}{t_2}}}{q_{m,n}}=\sum_{\substack{k=0 \\ k\neq \indice{k}{m}{n}{t_{1}}{t_2}}}^{\rb-1}\frac{\sumacoef{m}{n}{t_{1}}{t_2}{k}}{q_{m,n}}<\varepsilon.
\end{equation}

So, using relations \eqref{eq:suma_en_sumacoef} and \eqref{hypothesisthree} we deduce that 

\begin{align*}
\abs{\suma{m}{n}{t_{1}}{t_2}}&=
\abs{
\sumacoef{m}{n}{t_1}{t_2}{\indice{k}{m}{n}{t_1}{t_2}} \lambda^{-p_{m}\indice{k}{m}{n}{t_1}{t_2}}+
\sum_{\substack{k=0 \\ k\neq {\indice{k}{m}{n}{t_1}{t_2}}}}^{\rb-1} \sumacoef{m}{n}{t_{1}}{t_2}{k} \lambda^{-p_{m}k}
}\\
&\geq \sumacoef{m}{n}{t_1}{t_2}{\indice{k}{m}{n}{t_1}{t_2}} - \sum_{\substack{k=0 \\ k\neq {\indice{k}{m}{n}{t_1}{t_2}}}}^{\rb-1} \sumacoef{m}{n}{t_{1}}{t_2}{k}\\
&\geq \sumacoef{m}{n}{t_1}{t_2}{\indice{k}{m}{n}{t_1}{t_2}} - \epsilon q_{m,n}.
\end{align*}
From this inequality,  \eqref{eq:desigualdad_suma_matriz} and \eqref{hypothesisthree} we get
$$ \frac{\sumacoef{m}{n}{t_{1}}{t_2}{\indice{k}{m}{n}{t_{1}}{t_2}}}{q_{m,n}}-\varepsilon \leq
 \frac{\abs{\suma{m}{n}{t_{1}}{t_2}}}{q_{m,n}} \leq \frac{\sumacoef{m}{n}{t_{1}}{t_2}{\indice{k}{m}{n}{t_{1}}{t_2}}}{q_{m,n}}+\varepsilon.$$

Finally, from \eqref{eq:matriz_en_sumacoef} and \eqref{hypothesisthree} applied to these last inequalities we deduce that
$$\frac{\matrizp{m}{n}{t_{1}}{t_2}}{q_{m,n}}-2\varepsilon \leq \frac{\abs{\suma{m}{n}{t_{1}}{t_2}}}{q_{m,n}} \leq
\frac{\matrizp{m}{n}{t_{1}}{t_2}}{q_{m,n}}+\varepsilon.$$

Adding over $t_{1} \in V_{m}$ we get
$$\abs{\sum_{t_{1}\in V_m}\frac{\abs{\suma{m}{n}{t_{1}}{t_2}}}{q_{m,n}}-1}\leq 2d\varepsilon.$$
Property (1) follows since this inequality is valid for any $n>m>N$ given $\varepsilon>0$.   
\end{proof}

Notice that the sequence $(\indice{k}{m}{n}{t_{1}}{t_2}; m,n \in \NN, m<n)$ in statement (3) of Lemma 
\ref{lemm:suma_matriz_coef_equivalentes} is not necessarily uniquely defined.

\subsection{Constructing a partition from Theorem \ref{theo:principal}}
The next lemma allows to construct several partitions of the vertices in a level of the Bratteli diagram such that
the local order of most of the arrows starting in a vertex of an atom of such partition ending in the same vertex of a further level must be congruent modulo $\rb$. This is crucial to get Corollary \ref{cor:condwithpartition}.

\begin{lemm}\label{lemm:indice_nivelmenor_sobre}
For $t_{2} \in \{1,\ldots,d\}$ asssume that any of the equivalent conditions in Lemma \ref{lemm:suma_matriz_coef_equivalentes} holds.
For each $t_1 \in \{1,\ldots,d\}$ fix a sequence  
$(\indice{k}{m}{n}{t_{1}}{t_2};$ $ m,n \in \NN, m<n)$ as in statement (3) of Lemma \ref{lemm:suma_matriz_coef_equivalentes}. Consider the map 
\begin{eqnarray*}
   \Psi_{m,n,t_2}: \{1,\ldots,d\} &\to& \set{0,\ldots,\rb-1} \ . \nonumber\\
   t_{1} &\mapsto& \indice{k}{m}{n}{t_{1}}{t_2} \label{eq:definicion_funcion_phi}
\end{eqnarray*}
Then, 
\begin{enumerate}
\item for any large enough $m,n \in \NN$ with $m<n$, $\Psi_{m,n,t_2}$ is onto,
\item for any $k\in \{0,\ldots,\rb-1\}$, 
$$\sum_{t_{1}\in \Psi_{m,n,t_2}^{-1}(k)}\frac{\sumacoef{m}{n}{t_{1}}{t_2}{k}}{q_{m,n}}\xrightarrow[m,n\to\infty]{}\frac{1}{\rb}$$ 
uniformly in $m,n\in \NN$ with $m<n$. 
\end{enumerate}
\end{lemm}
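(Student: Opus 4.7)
\textbf{Proof plan for Lemma \ref{lemm:indice_nivelmenor_sobre}.}
The strategy is to prove statement (2) first and then deduce statement (1) as an immediate consequence. The core ingredients are the exact identity \eqref{eq:sumacoef_torres_todas}, which says that for every fixed $k$ the quantity $\sum_{t_1 \in V_m} \sumacoef{m}{n}{t_1}{t_2}{k}/q_{m,n}$ equals $1/\rb$ up to an error of order $1/q_{m,n}$, and the hypothesis (statement (3) of Lemma \ref{lemm:suma_matriz_coef_equivalentes}), which gives uniform control over the ``wrong'' roots at each vertex $t_1$.

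Concretely, I would first rewrite the hypothesis in the form
\[
\sum_{\substack{k'=0 \\ k' \neq \Psi_{m,n,t_2}(t_1)}}^{\rb-1} \frac{\sumacoef{m}{n}{t_1}{t_2}{k'}}{q_{m,n}} \;=\; \frac{\matrizp{m}{n}{t_1}{t_2}}{q_{m,n}} - \frac{\sumacoef{m}{n}{t_1}{t_2}{\Psi_{m,n,t_2}(t_1)}}{q_{m,n}} \;\xrightarrow[m,n\to\infty]{}\; 0,
\]
uniformly in $m<n$, using \eqref{eq:matriz_en_sumacoef}. Since there are at most $d$ vertices $t_1$ and $d$ is fixed, summing this expression over any subset of $V_m$ still yields a quantity converging uniformly to $0$.

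For statement (2), fix $k \in \{0,\ldots,\rb-1\}$ and split
\[
\sum_{t_1 \in V_m} \frac{\sumacoef{m}{n}{t_1}{t_2}{k}}{q_{m,n}}
\;=\; \sum_{t_1 \in \Psi_{m,n,t_2}^{-1}(k)} \frac{\sumacoef{m}{n}{t_1}{t_2}{k}}{q_{m,n}} \;+\; \sum_{t_1 \notin \Psi_{m,n,t_2}^{-1}(k)} \frac{\sumacoef{m}{n}{t_1}{t_2}{k}}{q_{m,n}}.
\]
By \eqref{eq:sumacoef_torres_todas} the left hand side converges uniformly to $1/\rb$. For each $t_1 \notin \Psi_{m,n,t_2}^{-1}(k)$, the index $k$ is one of the ``wrong'' roots at $t_1$, so $\sumacoef{m}{n}{t_1}{t_2}{k}$ is a single term in the small sum above; hence the second piece is bounded by $\sum_{t_1 \in V_m}\big(\matrizp{m}{n}{t_1}{t_2}/q_{m,n} - \sumacoef{m}{n}{t_1}{t_2}{\Psi_{m,n,t_2}(t_1)}/q_{m,n}\big)$, which tends to $0$ uniformly by the observation above. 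This forces the first piece to converge uniformly to $1/\rb$, proving (2).

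Statement (1) is then an immediate consequence: if $\Psi_{m,n,t_2}^{-1}(k) = \emptyset$ for some $k$ and infinitely many $m,n$, the sum on the left of (2) would vanish along that subsequence, contradicting convergence to $1/\rb > 0$. So $\Psi_{m,n,t_2}$ is onto for all large enough $m<n$. I do not expect any serious obstacle here; the only small subtlety is to keep the convergence uniform in $m$ and $n$ throughout, which is why it matters that the hypothesis from Lemma \ref{lemm:suma_matriz_coef_equivalentes}(3) is itself uniform and that the finite sums involved have at most $d$ terms.
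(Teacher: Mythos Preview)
Your proof is correct and uses the same substantive ingredients as the paper's argument: the identity \eqref{eq:sumacoef_torres_todas}, the splitting of the sum over $V_m$ into $\Psi_{m,n,t_2}^{-1}(k)$ and its complement, and the uniform smallness of the ``wrong root'' contributions coming from Lemma~\ref{lemm:suma_matriz_coef_equivalentes}(3). The only difference is organizational: the paper proves (1) first by a direct contradiction (choosing $\varepsilon < 1/(d+1)^2$ and comparing $1/\rb - \varepsilon$ with $d\varepsilon$) and then establishes (2), whereas you prove (2) directly and read off (1) as an immediate corollary---a slightly more economical arrangement, since the paper's invocation of (1) inside the proof of (2) is not actually needed for the estimate.
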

\begin{proof}
(1) Fix $0<\varepsilon < 1/(d+1)^{2}$. For any $t_{1}\in \{1,\ldots,d\}$ we have
$$\frac{\matrizp{m}{n}{t_1}{t_2}}{q_{m,n}}-\frac{\sumacoef{m}{n}{t_1}{t_2}{\indice{k}{m}{n}{t_1}{t_2}}}{q_{m,n}}=\sum_{\begin{subarray}{c}k=1\\ k\neq \indice{k}{m}{n}{t_{1}}{t_2}\end{subarray}}^{\rb-1}\frac{\sumacoef{m}{n}{t_1}{t_2}{k}}{q_{m,n}}.$$
Then, since by hypothesis $t_{2}$ verifies condition (3) of Lemma \ref{lemm:suma_matriz_coef_equivalentes}, for any $m,n \in \NN$ with $m<n$ large enough 
$\frac{\sumacoef{m}{n}{t_{1}}{t_2}{k}}{q_{m,n}}<\varepsilon$ for all $t_{1} \in \{1,\ldots,d\}$ and
$k \neq\indice{k}{m}{n}{t_{1}}{t_2}$. Since $q_{n}$ goes to infinity with $n$, then considering larger values of $m,n$ we can also assume that $1/q_{m,n}<\varepsilon$. 

If assertion (1) of the lemma is not true, then for some large $m,n$ with $m<n$, there is 
$k \in \{0,\ldots,\rb-1\}\setminus \operatorname{Im}\Psi_{m,n,t_2}$. Hence, by the previous considerations and equality \eqref{eq:sumacoef_torres_todas} 
$$\frac{1}{\rb}-\varepsilon < \sum_{t_{1}\in V_{m}}\frac{\sumacoef{m}{n}{t_{1}}{t_2}{k}}{q_{m,n}}<d\varepsilon,$$ 
which, by the choice of $\varepsilon$, contradicts the fact that $\rb\leq d$.
\medskip

(2) Fix $\varepsilon>0$. By part (1), there exists $N \in \NN$ such that for all $n>m>N$, 
$\Psi_{m,n,t_2}$ is surjective. Taking a larger $N$ if necessary we can also assume that $1/q_{m,n}$ and 
$\frac{\sumacoef{m}{n}{t_{1}}{t_2}{k}}{q_{m,n}}$ are less than $\varepsilon$ for all $t_{1}\in \{1,\ldots,d\}$ and $k\neq\indice{k}{m}{n}{t_{1}}{t_2}$.

Let $k$ be an element in  $\{0,\ldots,\rb-1\}$. By \eqref{eq:sumacoef_torres_todas} the following inequalities hold for all $n>m>N$:
\begin{align*}
  \ds\frac{1}{\rb}-\varepsilon & <  \ds\sum_{t_{1}\in V_{m}}\frac{\sumacoef{m}{n}{t_{1}}{t_2}{k}}{q_{m,n}} \leq  \ds\frac{1}{\rb}+\varepsilon, \\
  \ds\frac{1}{\rb}-\varepsilon & < \ds\sum_{t_{1}\in \Psi_{m,n,t_2}^{-1}(k)}\frac{\sumacoef{m}{n}{t_{1}}{t_2}{k}}{q_{m,n}}+\sum_{t_{1}\notin \Psi_{m,n,t_2}^{-1}(k)}\frac{\sumacoef{m}{n}{t_{1}}{t_2}{k}}{q_{m,n}}  \leq  \ds\frac{1}{\rb}+\varepsilon, \\
 \ds\frac{1}{\rb}-d\varepsilon & <  \ds\sum_{t_{1}\in \Psi_{m,n,t_2}^{-1}(k)}\frac{\sumacoef{m}{n}{t_{1}}{t_2}{k}}{q_{m,n}} \leq \ds\frac{1}{\rb}+\varepsilon.
\end{align*}
We have proved that
$$\sum_{t_{1}\in \Psi_{m,n,t_2}^{-1}(k)}\frac{\sumacoef{m}{n}{t_{1}}{t_2}{k}}{q_{m,n}}\xrightarrow[m,n\to\infty]{}\frac{1}{\rb}$$ uniformly in $m,n\in \NN$ with $m<n$, 
which ends the proof. 
\end{proof}

From the proof of the previous lemma one can deduce that the values of $\indice{k}{m}{n}{t_{1}}{t_2}$ are ultimately uniquely defined 
if $\displaystyle \liminf_{m,n \to \infty, m<n}\frac{\matrizp{m}{n}{t_1}{t_2}}{q_{m,n}} >0$. 

\section{Proof of Theorem \ref{theo:principal}}
\label{mainproofs}

In all this section $(X,T)$, $\mu$ and $(q_{n};n\geq 1)$ are set as in Theorem \ref{theo:principal}. 

\subsection{Proof that the technical condition is necessary.}
It is enough to consider a non continuous eigenvalue $\lambda=\exp(2i \pi /b)$ of $(X,T)$ for the ergodic measure $\mu$. Let $f\in L^{2}(X,\mu)$ be an associated eigenfunction with $|f|=1$. 

\begin{proof}[Proof that the technical condition is necessary.]
Recall that $b/(b,p_n)$  is equal to $\rb$ for all $n$ large enough. We know from Theorem \ref{theo:vpsonracionales} that $2 \leq \rb \leq d$. Otherwise, if $\rb=1$ the system would be linearly recurrent and $\lambda$ a continuous eigenvalue, as was discussed before stating Theorem \ref{theo:principal}. Thus we only need to prove statement (2) of the theorem. 
\medskip

It is enough to prove that for all $t_1 \in \{1,\ldots,d\}$ and $t_2\in I_{\mu}$,
\begin{equation}
\label{eq:PvsSum}
\frac{\matrizp{m}{n}{t_1}{t_2}}{q_{m,n}}-\frac{\abs{\sum_{s\in\sufijocbb{m}{n}{t_1}{t_2}}\lambda^{-p_ms}}}{q_{m,n}}\xrightarrow[m,n\to\infty]{} 0 
\end{equation}
uniformly in $m,n\in \NN$ with $m<n$. From here, we finish the proof adding over $t_{1} \in \{1,\ldots, d\}$.
\medskip

First, we integrate $f$ over $B_{m}(t_{1})$ and use the decomposition given in \eqref{eq:base_en_bases_superiores}: 
\begin{align*}
\int_{B_{m}(t_{1})} f d\mu 
&= \sum_{t_{2}\in V_{n}}\sum_{s\in\sufijocbb{m}{n}{t_{1}}{t_{2}}}\int_{T^{-p_ms}B_{n}(t_{2})}f d\mu \\
&= \sum_{t_{2}\in V_{n}}\sum_{s\in\sufijocbb{m}{n}{t_{1}}{t_{2}}}\int_{B_{n}(t_{2})}f\circ T^{-p_ms} d\mu \\
&= \sum_{t_{2}\in V_{n}}\left ( \sum_{s\in\sufijocbb{m}{n}{t_{1}}{t_{2}}} \lambda^{-p_ms} \right ) \int_{B_{n}(t_{2})}f d\mu . \\
\end{align*}
But, from \eqref{eq:def_c_y_ro}, we have that 
$$\int_{B_m(t_{1})}f \, d\mu=\mu_m(t_{1})c_m(t_{1})\lambda^{-\rho_m(t_{1})}, \ \int_{B_n(t_{2})}f \, d\mu=\mu_n(t_{2})c_n(t_{2})\lambda^{-\rho_n(t_{2})}.$$

Thus, substituting the corresponding expressions in the previous deduction we get
\begin{align*}
\mu_m(t_{1})c_m(t_{1})\lambda^{-\rho_m(t_{1})}&= \sum_{t_{2}\in V_{n}}\left ( \sum_{s\in\sufijocbb{m}{n}{t_{1}}{t_{2}}} \lambda^{-p_ms} \right ) \mu_n(t_{2})c_n(t_{2})\lambda^{-\rho_n(t_{2})} \\
\mu(\tau_{m}=t_{1})c_m(t_{1})\lambda^{-\rho_m(t_{1})}&= 
\sum_{t_{2}\in V_{n}} 
\frac{\sum_{s\in\sufijocbb{m}{n}{t_{1}}{t_{2}}} \lambda^{-p_ms}}{q_{m,n}}  
\mu(\tau_{n}=t_{2})c_n(t_{2})\lambda^{-\rho_n(t_{2})}, \\
\end{align*}
where in the last equality we have used the relations 
$\mu(\tau_{m}=t_{1})=p_{m} \mu_{m}(t_{1})$, $\mu(\tau_{n}=t_{2})=p_{n} \mu_{n}(t_{2})$ and $p_{n}/p_{m}=q_{m,n}$.
Using \eqref{eq:def_suma} we get the expression
\begin{equation}\label{eq:torren_en_torresm_con_suma}
   \mu(\torref{m}=t_1)c_m(t_1)\lambda^{-\rho_m(t_1)}=\sum_{t_{2}\in V_n}\frac{\suma{m}{n}{t_1}{t_{2}}}{q_{m,n}}\mu(\torref{n}=t_{2})c_n(t_{2})\lambda^{-\rho_n(t_{2})}.
\end{equation}

From \eqref{eq:measure} we have that for $0<m<n$ and $t_1\in \{1,\ldots,d\}$
\begin{equation}\label{eq:torren_en_torresm}
   \mu(\torref{m}=t_1)=\sum_{t_{2}\in V_n}\frac{\matrizp{m}{n}{t_1}{t_{2}}}{q_{m,n}}\mu(\torref{n}=t_{2}).
\end{equation} 

Then, taking absolute value in \eqref{eq:torren_en_torresm_con_suma} and using \eqref{eq:desigualdad_suma_matriz} and \eqref{eq:torren_en_torresm} we deduce
\begin{eqnarray*}
  \mu(\torref{m}=t_1)c_m(t_1)&\leq & \sum_{t_{2}\in V_n}\frac{\abs{\suma{m}{n}{t_1}{t_{2}}}}{q_{m,n}}\mu(\torref{n}=t_{2})c_n(t_{2}) \\
  &\leq & \sum_{t_{2}\in V_n}\frac{\abs{\suma{m}{n}{t_1}{t_{2}}}}{q_{m,n}}\mu(\torref{n}=t_{2}) \\
  &\leq & \sum_{t_{2}\in V_n}\frac{\matrizp{m}{n}{t_1}{t_{2}}}{q_{m,n}}\mu(\torref{n}=t_{2}) \\
  &=& \mu(\torref{m}=t_1).
\end{eqnarray*}
Notice that in the second inequality we have used that $c_n(t_{2})\leq 1$ for any $n\in\NN$ and $t_{2}\in V_n$. 

Finally, applying Lemma \ref{lemm:c_tiende_a_uno} in the preceding inequalities we deduce that
$$\sum_{t_{2}\in V_n}\left(\frac{\matrizp{m}{n}{t_1}{t_{2}}}{q_{m,n}}-\frac{\abs{\suma{m}{n}{t_1}{t_{2}}}}{q_{m,n}}\right)\mu(\torref{n}=t_{2})\xrightarrow[m,n\to \infty]{} 0$$ uniformly in $m,n\in \NN$ with $m< n$. 
If $t_{2} \in I_{\mu}$, then $\mu(\torref{n}=t_{2})> \delta$ (recall $\delta$ comes from the cleanliness property of the diagram). Therefore, the desired convergence in \eqref{eq:PvsSum} holds. 
\end{proof}

\subsection{Proof that the technical condition is sufficient.}

For this proof we will need the following result from \cite{necesariasuficiente} that we adapt to the language of Bratteli-Vershik systems.

\begin{theo}
\label{theorem:vpmeasurable}
Let $(X,T)$ be a minimal Cantor system given by a proper Bratteli-Vershik system.
A complex number $\lambda$ is an eigenvalue of
$(X,T)$ with respect to the ergodic probability measure $\mu$ if and only if
there exists a sequence of real functions
$\rho_n : V_{n} \rightarrow \RR$, $n\in \NN$, such that
\begin{equation}
\label{vp}
\lambda^{r_n (x) +  \rho_n (\tau_n (x))}
\hbox{ converges } \end{equation}
for $\mu$-almost every $x\in X$ when  $n$ tends to infinity.
\end{theo}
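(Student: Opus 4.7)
The plan is to handle both implications via the martingale of conditional expectations with respect to the sigma-algebras $\T_n$ generated by the Kakutani-Rohlin partitions $\P_n$. Since the $\T_n$ increase to the full Borel sigma-algebra of $X$ (the atoms of $\P_n$ shrink to points in a properly ordered Bratteli-Vershik representation), the martingale convergence theorem gives $\econd{h}{\T_n}\to h$ $\mu$-a.e.\ for every $h\in L^1(X,\mu)$. The bridge between the two directions is the explicit formula \eqref{eq:def_c_y_ro} for $\econd{f}{\T_n}$ whenever $f\circ T=\lambda f$.

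For necessity, start from an eigenfunction $f\in L^2(X,\mu)$, normalized by ergodicity so that $|f|=1$ $\mu$-a.e. Define $c_n:V_n\to[0,1]$ and $\rho_n:V_n\to[0,1)$ via \eqref{eq:def_c_y_ro}, choosing $\rho_n(t)$ arbitrarily when $c_n(t)=0$ so that $\rho_n$ is real-valued. The atom of $\P_n$ containing $x$ is $T^{-r_n(x)}B_n(\tau_n(x))$, and combining the relation $\int_{T^{-j}B_n(t)}f\,d\mu=\lambda^{-j}\int_{B_n(t)}f\,d\mu$ (from $f\circ T=\lambda f$ and the invariance of $\mu$) with \eqref{eq:def_c_y_ro} yields
\[
\econd{f}{\T_n}(x)=c_n(\tau_n(x))\,\lambda^{-r_n(x)-\rho_n(\tau_n(x))}.
\]
By martingale convergence the left side tends to $f(x)$ $\mu$-a.e. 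Taking moduli gives $c_n(\tau_n(x))\to|f(x)|=1$ $\mu$-a.e., so dividing yields $\lambda^{-r_n(x)-\rho_n(\tau_n(x))}\to f(x)$, i.e.\ $\lambda^{r_n(x)+\rho_n(\tau_n(x))}\to \overline{f(x)}$ $\mu$-a.e.

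For sufficiency, assume real $\rho_n:V_n\to\RR$ are given with $g_n(x):=\lambda^{r_n(x)+\rho_n(\tau_n(x))}$ converging $\mu$-a.e.\ to some measurable $g$ of modulus one. The geometric key is that whenever $x$ lies \emph{outside} the top set $A_n:=\bigcup_{t\in V_n}B_n(t)$ of level $n$, one has exactly $r_n(Tx)=r_n(x)-1$ and $\tau_n(Tx)=\tau_n(x)$, whence $g_n(Tx)=\lambda^{-1}g_n(x)$. The sets $A_n$ are decreasing and their intersection is the single maximal path $\{x_{\max}\}$ (since the diagram is properly ordered), which is $\mu$-null because an invariant measure on a minimal Cantor system is non-atomic. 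Therefore $\mu$-a.e.\ $x$ eventually stays out of $A_n$; passing to the limit in $g_n(Tx)=\lambda^{-1}g_n(x)$ gives $g\circ T=\lambda^{-1}g$ $\mu$-a.e. Then $\bar g\in L^2(X,\mu)$ is nonzero and satisfies $\bar g\circ T=\lambda\bar g$ (using $|\lambda|=1$), so $\lambda$ is an eigenvalue for $\mu$.

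The main obstacle is passing from $|\econd{f}{\T_n}|\to 1$ to the pointwise convergence of $\lambda^{r_n+\rho_n\circ\tau_n}$ in the necessity direction: one must invoke the bound $c_n\leq 1$ together with $|\econd{f}{\T_n}(x)|=c_n(\tau_n(x))\to|f(x)|=1$ to conclude $c_n(\tau_n(x))\to 1$, and only then divide to isolate the unimodular factor. The dual delicate point in sufficiency is the control of the tops of towers: it is precisely the properness of the Bratteli-Vershik representation that forces $\bigcap_nA_n$ to be a single $\mu$-null point, so that the cocycle identity $g_n(Tx)=\lambda^{-1}g_n(x)$ survives the almost-everywhere passage to the limit.
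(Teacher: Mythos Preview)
The paper does not provide its own proof of Theorem~\ref{theorem:vpmeasurable}; it explicitly imports the result from \cite{necesariasuficiente}, adapting only the statement to the Bratteli--Vershik language. So there is no in-paper argument to compare against.

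Your proof is correct and follows the standard route underlying \cite{necesariasuficiente}. The necessity direction uses the conditional-expectation formula for an eigenfunction (which the paper itself quotes just before Lemma~\ref{lemm:c_tiende_a_uno}) together with the martingale convergence theorem, and your isolation of the unimodular factor via $c_n(\tau_n(x))\to 1$ is exactly the right move. The sufficiency direction is also correct: the relation $r_n(Tx)=r_n(x)-1$, $\tau_n(Tx)=\tau_n(x)$ off the top set $A_n=\bigcup_t B_n(t)$, the monotonicity $A_{n+1}\subseteq A_n$, and the identification $\bigcap_n A_n=\{x_{\max}\}$ all hold in a properly ordered diagram, and non-atomicity of $\mu$ (from minimality on a Cantor set) makes this intersection $\mu$-null. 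One small remark: in sufficiency you invoke $|\lambda|=1$ at the end; this is harmless since the convergence hypothesis with real $\rho_n$ and $r_n(x)\to\infty$ already forces $|\lambda|=1$ (alternatively, $1/g$ works directly as an eigenfunction for $\lambda$ without passing through conjugation).
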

We recall that $r_n(x)= \sufijofb{0}(x) + \sum_{i=1}^{n-1}p_i\sufijofb{i}(x)$ is the entrance time of $x$ to 
$B_{n}(\tau_{n}(x))$ (see \eqref{eq:retorno_toeplitz}). 

\begin{proof}[Proof that the technical condition is sufficient.] We notice that condition (2) in Theorem \ref{theo:principal} is stable under telescoping, so we will telescope our 
Bratteli-Vershik representation freely.

\subsubsection{Constructing a partition.}
Take $t_{2} \in I_{\mu}$ and $m,n \in \NN$ with $m<n$ enough large. Notice that our 
hypothesis is condition (1) in Lemma \ref{lemm:suma_matriz_coef_equivalentes} with $t_{2} \in I_{\mu}$. 
Thus, for any $t_{1} \in \{1,\ldots,d\}$ there exist $k_{m,n}(t_{1},t_{2})$ given by condition (3) of Lemma \ref{lemm:suma_matriz_coef_equivalentes} and the map 
$\Psi_{m,n,t_2}: \{1,\ldots,d\} \to \{0,\ldots,\rb-1\}$ given by Lemma \ref{lemm:indice_nivelmenor_sobre}.
Define 
$$
\H_{m,n,t_2}=\set{A_{m,n,t_2}^{(0)}, A_{m,n,t_2}^{(1)}, \ldots, A_{m,n,t_2}^{(\rb-1)}},
$$ 
where
$A_{m,n,t_2}^{(k)}=\Psi_{m,n,t_2}^{-1}(k)$ for $k \in \{0,\ldots,\rb-1\}$.

From Lemma \ref{lemma:contraction-partition} we can suppose after telescoping that
$\H_{m,n,t_{2}} = \H_{m,m+1,t_{2}}$
for all $m,n \in \NN$ with $m<n$ and $t_{2}\in I_{\mu}$. 
Thus we set $\H_{m,n,t_{2}} = \H_{m,t_{2}}$ and $A_{m,n,t_2}^{(k)}= A_{m,t_2}^{(k)}$ for $k\in \{0,\ldots,\rb-1\}$. In addition, after another telescoping, 
we can suppose $A_{m,t_{2}}^{(k)} = A_{m',t_{2}}^{(k)}$ for all $m,m' \geq 1$ and $k\in \{0,\ldots,\rb-1\}$. We set $\H_{t_{2}}=\H_{m,t_{2}}$, $A_{t_{2}}^{(k)} = A_{m,t_{2}}^{(k)}$ and thus
$k(t_1,t_2)=k_{m,n}(t_1,t_2)$ for any $m,n\in \NN$ with $m<n$, $t_{1} \in \{1,\ldots,d\}$  and $t_{2} \in I_{\mu}$.
\medskip

\subsubsection{Constructing a good set of full measure.}
For $m, n\in \NN$ with $m<n$ consider the set
$$
  \mathcal{C}_{m,n}=\set{\tau_{n} \in I_{\mu}, \ \sufijofbb{m}{n}\not\equiv_\rb k(\tau_m, \tau_{n})}
  \cup \{\tau_{n}\not	 \in I_{\mu}\}.
$$ 
Recall that the map $k(t_{1},t_{2})$ has been defined only for $t_{2}\in I_{\mu}$. 
Let us compute the measure of $\mathcal{C}_{m,n}$:

\begin{align*}
    &\mu(\mathcal{C}_{m,n}) \\
    &= \sum_{t_2\in I_{\mu}}\sum_{t_1\in V_m}\mu(\tau_m=t_1, \tau_{n}=t_2, \sufijofbb{m}{n}\not\equiv_\rb k(t_1,t_2)) + \mu(\tau_{n}\not	 \in I_{\mu}) \\
    &= \sum_{t_2\in I_{\mu}}\sum_{t_1\in V_m}\left(\matrizp{m}{n}{t_{1}}{t_{2}}-\sumacoef{m}{n}{t_1}{t_2}{k(t_1 ,t_2)}\right) p_m \ \mu_{n}(t_2) + \mu(\tau_{n}\not \in I_{\mu}) \\
    &= \sum_{t_2\in I_{\mu}}
    \left(\sum_{t_1\in V_m}\frac{\matrizp{m}{n}{t_{1}}{t_{2}}}{q_{m,n}} -
    \frac{\sumacoef{m}{n}{t_1}{t_2}{k(t_1,t_2)}}{q_{m,n}}\right)\mu(\torref{n}=t_2) + 
    \mu(\tau_{n}\not	 \in I_{\mu}), 
\end{align*}
where we have used that $q_{m,n} \ p_{m}=p_{n}$ and $\mu(\tau_{n}=t_{2})=p_{n} \ \mu_{n}(t_{2})$.
\smallskip

Since condition (3) of Lemma \ref{lemm:suma_matriz_coef_equivalentes} holds for $t_{2} \in I_{\mu}$  and 
$\mu(\tau_{n}\not \in I_{\mu})$ goes to $0$ when $n$ tends to $\infty$ (recall the diagram is clean), then  
$\mu(\mathcal{C}_{m,n})\xrightarrow[m,n\to\infty]{} 0$ uniformly in $m,n \in \NN$ with $m<n$.

Thus, we can telescope the diagram in order that 
\begin{align}
\sum_{n\in \NN} \mu(\mathcal{C}_{n,n+1}) \hbox{ converges}. 
\end{align}
Hence, from the Borel-Cantelli Lemma we deduce that $\mu (\mathcal{C}) = 1$, where 
$$\mathcal{C} = \liminf_{n\to 	\infty} \mathcal{C}_{n,n+1}^{c}=\cup_{N\in \NN} \cap_{n\geq  N} \{\tau_{n}\in I_{\mu}, \sufijofb{n}\equiv_\rb k(\tau_n, \tau_{n+1})\} \ .$$

\subsubsection{Constructing an eigenfunction.}
After telescoping we can suppose that $p_n \equiv_b p$ for some $p\in \{ 0,\ldots , b-1\}$ and 
for all $n\geq 1$. This will transform expressions of the form $\lambda^{-p_{n}s}$ below to $\lambda^{-ps}$, which is independent of $n$.
\medskip

For $m,n \in \NN$ with $m<n$, $t_1 \in \{1,\ldots,d\}$ and $t_{2} \in I_{\mu}$ we have
\begin{eqnarray*}
\sum_{s\in\sufijocbb{m}{n}{t_1}{t_2}} \lambda^{-p_ms} 
&=& 
\sum_{\substack{s\in\sufijocbb{m}{n}{t_1}{t_2} \\ s \equiv_{\rb} k(t_{1},t_{2})}}
\lambda^{-pk(t_{1},t_{2})} + \sum_{\substack{s\in\sufijocbb{m}{n}{t_1}{t_2} \\ s \not \equiv_{\rb} k(t_{1},t_{2})}} \lambda^{-ps}
\\
&=&\matrizp{m}{n}{t_{1}}{t_{2}} \lambda^{-pk (t_1,t_2)} + 
\sum_{\substack{s\in\sufijocbb{m}{n}{t_1}{t_2} \\ s \not \equiv_{\rb} k(t_{1},t_{2})}} 
\left(\lambda^{-ps}-\lambda^{-pk(t_{1},t_{2})}\right) ,\\
\end{eqnarray*}
where we have used that $\#\sufijocbb{m}{n}{t_1}{t_2}=\matrizp{m}{n}{t_{1}}{t_{2}}$. 
Also, since
$$\#\{s\in\sufijocbb{m}{n}{t_1}{t_2}\ ; \ s \not \equiv_{\rb} k(t_{1},t_{2})\}=
\matrizp{m}{n}{t_{1}}{t_{2}}-\sumacoef{m}{n}{t_{1}}{t_{2}}{k(t_{1},t_{2})},
$$
we have that 
$$
\left |
\sum_{\substack{s\in\sufijocbb{m}{n}{t_1}{t_2} \\ s \not \equiv_{\rb} k(t_{1},t_{2})}} (\lambda^{-ps}-\lambda^{-pk(t_{1},t_{2})})
\right |
\leq 2 \cdot
(\matrizp{m}{n}{t_{1}}{t_{2}}-\sumacoef{m}{n}{t_{1}}{t_{2}}{k(t_{1},t_{2})})
$$
As mentioned before, condition (2) of the main theorem using $t_{2} \in I_{\mu}$ implies that the equivalent conditions in Lemma \ref{lemm:suma_matriz_coef_equivalentes} hold. So, by Lemma \ref{lemm:suma_matriz_coef_equivalentes} (3), for $t_1 \in \{1,\ldots,d\}$ and $t_{2} \in I_{\mu}$ we have 
$$
\frac{\matrizp{m}{n}{t_{1}}{t_{2}}-\sumacoef{m}{n}{t_{1}}{t_{2}}{k(t_{1},t_{2})}}{q_{m,n}}
\xrightarrow[m,n\to\infty]{} 0
\textrm{\; \; \; \;}$$
uniformly in $m,n \in \NN$ with $m<n$. 

We summarise previous discussion. Fix a real number $\epsilon >0$. 
Then, for all large enough $m,n \in \NN$ with $m<n$, $t_1 \in \{1,\ldots,d\}$ and $t_{2} \in I_{\mu}$
we can write
\begin{equation}
\label{eq:decomposition}
\frac{1}{q_{m,n}}\sum_{s\in\sufijocbb{m}{n}{t_1}{t_2}}\lambda^{-p_ms} = \frac{\matrizp{m}{n}{t_{1}}{t_{2}}}{q_{m,n}} \lambda^{-pk(t_1,t_2)} + \epsilon_{m,n}(t_1,t_2),
\end{equation}
where $\epsilon_{m,n}(t_1,t_2)$ is a complex number with $|\epsilon_{m,n} (t_1,t_2)| \leq \epsilon$. 
\medskip

Now, consider $\ell,m,n \in \NN$ with $\ell<m<n$ enough large (such that the different uses of \eqref{eq:decomposition} below are valid), $t_1 \in \{1,\ldots,d\}$ and $t_{3} \in I_{\mu}$. Then, by using \eqref{eq:interpolacion_sufijos} to get the second equality and \eqref{eq:decomposition} three times, we get  
\begin{align*}
& \frac{\matrizp{\ell}{n}{t_{1}}{t_{3}}}{q_{\ell,n}} \lambda^{-pk(t_1,t_3)} + \epsilon_{\ell,n} (t_1,t_3) \\
= & \frac{1}{q_{\ell,n}}\sum_{s\in\sufijocbb{\ell}{n}{t_1}{t_3}}\lambda^{-p_\ell s} \\
= & \frac{1}{q_{\ell,n}}\sum_{t_{2} \in V_{m}} 
\sum_{s_1\in\sufijocbb{l}{m}{t_{1}}{t_2}}
\sum_{s_2\in \sufijocbb{m}{n}{t_2}{t_{3}} }
\lambda^{-p_\ell s_1 -p_{m} s_2}\\
= & \sum_{t_{2} \in V_{m}} 
\left (\frac{1}{q_{\ell,m}} \sum_{s_1\in\sufijocbb{\ell}{m}{t_{1}}{t_2}} \lambda^{-p_\ell s_1} \right)
\left (\frac{1}{q_{m,n}} \sum_{s_2\in \sufijocbb{m}{n}{t_2}{t_{3}}} \lambda^{-p_{m} s_2} \right )\\
=&
\sum_{t_{2} \in I_{\mu}}
\left(\frac{\matrizp{\ell}{m}{t_{1}}{t_{2}}}{q_{\ell,m}}\lambda^{-pk(t_1,t_{2})} +\epsilon_{\ell,m}(t_1,t_{2})\right) 
\left(\frac{\matrizp{m}{n}{t_{2}}{t_{3}}}{q_{m,n}}\lambda^{-pk(t_{2},t_3)} +\epsilon_{m,n}(t_{2},t_3)\right) \\
&+ \sum_{t_{2} \in V_{m}\setminus I_{\mu}}
\left(\frac{1}{q_{\ell,m}}\sum_{s_1\in\sufijocbb{\ell}{m}{t_{1}}{t_2}} \lambda^{-p_\ell s_1}\right )
\left(\frac{\matrizp{m}{n}{t_{2}}{t_{3}}}{q_{m,n}}\lambda^{-pk(t_{2},t_3)} +\epsilon_{m,n}(t_{2},t_3)\right)\\
\end{align*}

Set $k(t_1,t_{2})=0$ for $t_{1} \in \{1,\ldots,d\}$ and $t_{2}\notin I_{\mu}$ (recall that this map is only defined for $t_{2} \in I_{\mu}$). Adding and subtracting the terms  
$\frac{\matrizp{\ell}{m}{t_{1}}{t_{2}}}{q_{\ell,m}}\lambda^{-pk(t_1,t_{2})}$ when $t_{2} \notin  I_{\mu}$ in the last equality of previous deduction gives

\begin{align*}
& \frac{\matrizp{\ell}{n}{t_{1}}{t_{3}}}{q_{\ell,n}} \lambda^{-pk(t_1,t_3)} + \epsilon_{\ell,n} (t_1,t_3) \\
&=
\sum_{t_{2} \in I_{\mu}}
\left(\frac{\matrizp{\ell}{m}{t_{1}}{t_{2}}}{q_{\ell,m}}\lambda^{-pk(t_1,t_{2})} +\epsilon_{\ell,m}(t_1,t_{2})\right) \\
& \hskip 6cm \cdot
\left(\frac{\matrizp{m}{n}{t_{2}}{t_{3}}}{q_{m,n}}\lambda^{-pk(t_{2},t_3)} +\epsilon_{m,n}(t_{2},t_3)\right) \\
&+
\sum_{t_{2} \in V_{m}\setminus I_{\mu}}
\left(\frac{\matrizp{\ell}{m}{t_{1}}{t_{2}}}{q_{\ell,m}}\lambda^{-pk(t_1,t_{2})} \right)
\left(\frac{\matrizp{m}{n}{t_{2}}{t_{3}}}{q_{m,n}}\lambda^{-pk(t_{2},t_3)} +\epsilon_{m,n}(t_{2},t_3)\right) \\
&+ \sum_{t_{2} \in V_{m}\setminus I_{\mu}}
\left(\frac{1}{q_{\ell,m}}\sum_{s_1\in\sufijocbb{\ell}{m}{t_{1}}{t_2}} \lambda^{-p_\ell s_1}
- \frac{\matrizp{\ell}{m}{t_{1}}{t_{2}}}{q_{\ell,m}}\lambda^{-pk(t_1,t_{2})}\right ) \\
& \hskip 6cm \cdot
\left(\frac{\matrizp{m}{n}{t_{2}}{t_{3}}}{q_{m,n}}\lambda^{-pk(t_{2},t_3)} +\epsilon_{m,n}(t_{2},t_3)\right).\\
\end{align*}
Finally, multiplying the terms we get that 
\begin{align}
\label{eq:maincomputation}
\frac{\matrizp{\ell}{n}{t_{1}}{t_{3}}}{q_{\ell,n}} \lambda^{-pk(t_1,t_3)} &+ \epsilon_{\ell,n} (t_1,t_3) \nonumber \\
&=\epsilon' +
\sum_{t_{2} \in V_{m}}
\frac{\matrizp{\ell}{m}{t_{1}}{t_{2}} \matrizp{m}{n}{t_{2}}{t_{3}}}{q_{\ell,n}} 
\lambda^{-p(k(t_1,t_{2})+k(t_{2},t_3))}, \nonumber \\
\end{align}
where 
\begin{align}
\label{epsilonprime}
|\epsilon'|\leq 
2d\epsilon + d\epsilon^{2}+ d \epsilon +
\sum_{t_{2} \in V_{m}\setminus I_\mu} 2 \cdot \frac{\matrizp{m}{n}{t_{2}}{t_{3}}}{q_{m,n}}
+ 2 d\epsilon.
\end{align}
But, for $t_{2} \not\in I_\mu$, $t_{3} \in I_{\mu}$ and any large $m,n \in \NN$ with $m<n$ we have that 
$\mu ( \tau_n = t_3 ) \geq \delta$ and  $\mu ( \tau_m = t_{2} ) \leq \delta\epsilon$, where $\delta$ comes from the definition of a clean Bratteli-Vershik representation. Consequently, using equality \eqref{eq:torren_en_torresm}, 
we have that
\begin{align}
\label{eq:outofImu}
\frac{\matrizp{m}{n}{t_{2}}{t_{3}}}{q_{m,n}} \leq \frac{\mu  ( \tau_m = t_{2} )}{\mu  ( \tau_n = t_3 )} 
\leq \frac{\mu  ( \tau_m = t_{2} )}{ \delta } \leq \epsilon.
\end{align}
Thus, combining \eqref{eq:outofImu} in \eqref{epsilonprime}, we get 
$$|\epsilon'| \leq 5 d\epsilon + 2 d \epsilon \leq 8 d\epsilon.$$

Now, a simple reordering of terms in \eqref{eq:maincomputation} gives 
\begin{align}
\label{secondcomputation}
 1 + \frac{q_{\ell,n}}{\matrizp{\ell}{n}{t_{1}}{t_{3}}}&(\epsilon_{\ell,n} (t_1,t_3)
-  \epsilon') \lambda^{pk(t_{1},t_{3})}\\
& =\sum_{t_{2} \in V_{m}}
\frac{\matrizp{\ell}{m}{t_{1}}{t_{2}}
\matrizp{m}{n}{t_{2}}{t_{3}}}{\matrizp{\ell}{n}{t_{1}}{t_{3}}} 
\lambda^{p(k(t_1,t_3)-k(t_1,t_{2})-k(t_{2},t_3))}.
\end{align}

Recall from Lemma~\ref{lemma:low-independence} that for every $\ell \in \NN$ enough large there exist integers $m,n$ with $n>m>\ell$ such that for all $t_1,t_{2},t_3\in I_{\mu}$, 
\begin{align}
\label{eq:low-independance}
\frac{\matrizp{\ell}{n}{t_{1}}{t_{3}}}{q_{\ell,n}} \geq \frac{\delta}{3}, \
\frac{\matrizp{\ell}{m}{t_{1}}{t_{2}}}{q_{\ell,m}} \geq \frac{\delta}{3} \text{ and } 
\frac{\matrizp{m}{n}{t_{2}}{t_{3}}}{q_{m,n}} \geq \frac{\delta}{3}.
\end{align}

Then, if considering $t_{1},t_{3} \in I_{\mu}$ and fixing integers $\ell,m,n\in\NN$ with $\ell<m<n$ enough large to verify \eqref{eq:low-independance}, and using \eqref{secondcomputation}, we get

\begin{align}
\label{eq:convexsum}
1 + \epsilon''
& = 
\sum_{t_{2}\in V_{m}}
\frac{\matrizp{\ell}{m}{t_{1}}{t_{2}} \matrizp{m}{n}{t_{2}}{t_{3}}}{\matrizp{\ell}{n}{t_{1}}{t_{3}}} 
\lambda^{p(k(t_1,t_3)-k(t_1,t_{2})-k (t_{2},t_3))},
\end{align}
where $|\epsilon''|\leq \hat C \epsilon $ and $\hat C$ is a positive constant only depending on the system.
\medskip

Let us show that $pk(t_1,t_3) \equiv_{b} p(k (t_1,t_{2})+k (t_{2},t_3))$ for all $t_{2}\in I_{\mu}$.
We rewrite the right hand side of \eqref{eq:convexsum}, which is a convex sum, as 
$\sum_{i=0}^{b-1} \alpha_{i}\lambda^i$, where 
$$
\alpha_{i}=\sum_{\{t_{2} \in V_{m}\ ; \ p(k (t_1,t_3) -k (t_1,t_{2})-k (t_{2},t_3)) \equiv_b i \}}
\frac{\matrizp{\ell}{m}{t_{1}}{t_{2}} \matrizp{m}{n}{t_{2}}{t_{3}}}{\matrizp{\ell}{n}{t_{1}}{t_{3}}}. 
$$

By \eqref{eq:convexsum}, we can use Lemma \ref{lemm:geometrico}. Then, there is $i_{0} \in \{0,\ldots, b-1\}$ such that 
$\alpha_{i_{0}} > 1- C \hat C\epsilon$ ($C$ is the constant of Lemma \ref{lemm:geometrico} for the $b$-th roots of unity).  Moreover, if $\epsilon$ was taken small enough, we have that $i_{0}=0$ since the convex combination is close to $1$. But, again using \eqref{eq:low-independance}, for all $t_{2}\in I_{\mu}$,

\begin{align*}
\frac{\matrizp{\ell}{m}{t_{1}}{t_{2}} \matrizp{m}{n}{t_{2}}{t_{3}}}{\matrizp{\ell}{n}{t_{1}}{t_{3}}}= 
\frac{\matrizp{\ell}{m}{t_{1}}{t_{2}}}{q_{\ell,m}} \frac{\matrizp{m}{n}{t_{2}}{t_{3}}}{q_{m,n}}
\frac{q_{\ell,n}}{\matrizp{\ell}{n}{t_{1}}{t_{3}}}
\geq  \frac{\delta^2}{9}>  C \hat C \epsilon
\end{align*}
if $\epsilon$ was taken small enough. 
Since $\alpha_{i_{0}} > 1- C \hat C\epsilon$, then for all $t_{2}\in I_{\mu}$, 
$$p(k(t_1,t_3) -k (t_1,t_{2})-k (t_{2},t_3))\equiv_b i_0=0 \ .$$ 
This proves our claim.
\medskip

Summarising, we have proved that for all $t_{1},t_{2}, t_{3}\in I_{\mu}$,
\begin{equation}\label{eq:cocyle1}
p \cdot k (t_{1},t_{3}) \equiv_{b}  p \cdot k (t_{1},t_{2}) + p \cdot k (t_{2},t_{3}), 
\end{equation}
\begin{equation}\label{eq:cocyle2}
p \cdot k (t_{1},t_{1}) \equiv_{b} 0, \ \  p \cdot k (t_{1},t_{2}) \equiv_{b} -p \cdot k (t_{2},t_{1}).
\end{equation}
\medskip

To finish we will verify the criterium of Theorem \ref{theorem:vpmeasurable} for $\lambda=\exp(2i\pi/b)$. 
Fix an element $t_{0} \in I_{\mu}$ and for each $n\geq 1$ define $\rho_n: V_{n} \to \RR$ by 
$\rho_{n}(t)= - p  k (t_{0},t)$.

Let $x$ be an element in $\mathcal{C}$.
By definition of ${\mathcal C}$, there exists $N \in \NN$ such that for any $n\geq N$, $\tau_{n}(x) \in I_{\mu}$ and 
$\overline{s}_{n} (x) \equiv_\rb   k (\tau_n (x) , \tau_{n+1} (x))$. Notice that, since $p\rb$ is divisible by $b$ (recall that $\rb = p/(b,p)$), then after multiplying by $p$ we get that  
$p\overline{s}_{n} (x) \equiv_b  p k (\tau_n (x) , \tau_{n+1} (x))$. 
Then, for $n \geq N$ one has, 
\begin{align*}
| \lambda^{r_{n+1} (x) + \rho_{n+1} (\tau_{n+1}(x))} -  \lambda^{r_{n} (x) + \rho_n (\tau_{n}(x))}|= &  
| \lambda^{r_{n+1} (x) -r_{n}(x)+ \rho_{n+1} (\tau_{n+1}(x))-\rho_{n}(\tau_{n}(x))} -  1|\\
=& | \lambda^{ p \overline{s}_{n} (x) - pk (t_{0},\tau_{n+1} (x)) + pk (t_{0},\tau_n (x)) } - 1 |\\
=& | \lambda^{ p \overline{s}_{n} (x) - (p k (\tau_n (x),t_{0})+pk (t_{0},\tau_{n+1} (x)))  } - 1 |\\
=& | \lambda^{ p \overline{s}_{n} (x) - pk (\tau_n (x) , \tau_{n+1} (x)) } - 1 |=0,
\end{align*}
where to deduce the second equality we have used \eqref{eq:retorno_toeplitz} and to derive the last one we applied \eqref{eq:cocyle1} and \eqref{eq:cocyle2}. This proves that $\lambda^{r_{n}(x) + \rho_n (\tau_{n}(x))}$ 
is eventually constant, so it converges. We finish the proof using Theorem \ref{theorem:vpmeasurable}. 
\end{proof}

Let us remark that from the previous proof Corollary \ref{coro:cocycle} follows directly. In fact, it is just a reformulation of the last part of the proof.  

\subsection{Proof of Corollary \ref{cor:alternative}}
(1) Let $\mu$ be an ergodic measure such that $\rB_{\mu}$ is non empty. Let $\lambda=\exp(2i\pi a/b)$ be a 
non continuous eigenvalue for $\mu$ such that $b/(b,p_{n})=\rb \in \rB_{\mu}$ for all large enough integers $n\in \NN$.
The hypotheses of Lemma \ref{lemm:indice_nivelmenor_sobre} hold for all $t_{2} \in I_{\mu}$ using this value of $\lambda$. Then, from Lemma \ref{lemm:indice_nivelmenor_sobre} (2), for every $t_2\in I_{\mu}$ and $k\in \{0,\ldots,\rb-1\}$ the sum
\begin{align*}
&\sum_{t_{1}\in \Psi_{m,n,t_2}^{-1}(k)}\frac{\sumacoef{m}{n}{t_{1}}{t_2}{k}}{q_{m,n}} \\
&=\sum_{t_{1}\in \Psi_{m,n,t_2}^{-1}(k)\cap I_{\mu}}\frac{\sumacoef{m}{n}{t_{1}}{t_2}{k}}{q_{m,n}} + 
\sum_{t_{1}\in \Psi_{m,n,t_2}^{-1}(k)\cap I_{\mu}^{c}}\frac{\sumacoef{m}{n}{t_{1}}{t_2}{k}}{q_{m,n}}
\end{align*}
converges uniformly in $m,n \in \NN$ with $m<n$ to $1/\rb$. But, since
$\frac{\sumacoef{m}{n}{t_{1}}{t_2}{k}}{q_{m,n}} \leq \frac{\matrizp{m}{n}{t_{1}}{t_2}}{q_{m,n}},$
from \eqref{eq:p_tiende_a_cero_fuera_de_I} we deduce that 

$$
\sum_{t_{1}\in \Psi_{m,n,t_2}^{-1}(k)\cap I_{\mu}}\frac{\sumacoef{m}{n}{t_{1}}{t_2}{k}}{q_{m,n}}\xrightarrow[m,n\to\infty]{}\frac{1}{\rb}, \ \  
\sum_{t_{1}\in \Psi_{m,n,t_2}^{-1}(k)\cap I_{\mu}^{c}}\frac{\sumacoef{m}{n}{t_{1}}{t_2}{k}}{q_{m,n}} 
\xrightarrow[m,n\to\infty]{} 0,$$ 
converges uniformly in $m,n \in \NN$ with $m<n$.

We deduce that for any large enough $m,n \in \NN$ with $m< n$, $t_2\in I_{\mu}$ and $k\in \{0,\ldots,\rb-1\}$ each set 
$\Psi_{m,n,t_2}^{-1}(k)$ must contain an element of $I_{\mu}$. Thus $\# I_{\mu}\geq \rb$.
\medskip

(2) Let us consider $\mu\in\mathcal{M}_{erg}(X,T)$ such that $\rB_{\mu}\neq\emptyset$. 
Let $\exp(2i\pi/b_1)$ and $\exp(2i\pi/b_2)$ be two different non continuous eigenvalues for
$\mu$. Then, by Bezout's identity, $\exp(2i\pi/\operatorname{lcm}(b_1,b_2))$ is also an eigenvalue for $\mu$.
Moreover, it is a non continuous eigenvalue. Indeed, if this fact is not true, then 
for some $n\in \NN$ and $a \in \ZZ$ we have that 
$1/\operatorname{lcm}(b_1,b_2)=a/p_{n}$. This implies that $1/b_{1}=(a\operatorname{lcm}(b_1,b_2)/b_{1})/p_{n}$ which is a contradiction since $\exp(2i\pi/b_1)$ is a non continuous eigenvalue. This proves our claim.
 
Denote $\operatorname{lcm}(b_1,b_2)$ by $b$. Decomposing 
$b_{1}={\mathfrak b}_{1}\cdot {\mathfrak b}_{2} \cdot {\mathfrak b}_{3} \cdot {\mathfrak b}_{4}$ and 
$b_{2}={\mathfrak b}_{3} \cdot {\mathfrak b}_{4} \cdot {\mathfrak b}_{5} \cdot {\mathfrak b}_{6}$, where 
$(b_{1},b_{2})={\mathfrak b}_{3}\cdot {\mathfrak b}_{4}$, $(b_{1},p_{n})={\mathfrak b}_{2}\cdot {\mathfrak b}_{3}$ and $(b_{2},p_{n})={\mathfrak b}_{3} \cdot {\mathfrak b}_{5}$, we get 
the identity
$$
\operatorname{lcm}\left(\frac{b_1}{(b_1,p_n)},\frac{b_2}{(b_2,p_n)}\right)=\frac{b}{(b,p_n)}.
$$
From this identity follows that it is not possible to have more than one divisibility-maximal element in $\rB_{\mu}$.

(3) For different ergodic measures $\mu$ and $\nu$ we have $I_{\mu}\cap
I_{\nu}=\emptyset$ (recall that the Bratteli-Vershik representation is clean). Then,
$$
\sum_{\mu\in \mathcal{M}_{erg}(X,T)} \#I_{\mu}\leq d.
$$
But, (1) implies that $\rb_{\mu}\leq \#I_{\mu}$ for each $\mu \in {\mathcal M}$, so (3) follows.

(4) As in the proof of (3), we use that for different ergodic measures $\mu$ and $\nu$ we have $I_{\mu}\cap
I_{\nu}=\emptyset$. Hence,  
\begin{align*}
\# \mathcal{M} \leq \# \mathcal{M}_{erg}(X,T) &= \sum_{\mu\in\mathcal{M}_{erg}(X,T)}\#I_{\mu} - \sum_{\mu\in\mathcal{M}_{erg}(X,T)}(\#I_{\mu}-1) \\
&\leq d-\sum_{\mu\in {\mathcal M}}(\rb_{\mu}-1),
\end{align*}
where in the inequality we used (1). This proves (4).

\subsection{Proof of Corollary \ref{coro:ue}}
Consider $\lambda = \exp{(2i\pi /b)}$ with $b$ an integer such that $b/(b,p_n)=d$ for all $n$ large enough. 

First we prove the necessary and sufficient condition given by \eqref{eq:coro7}.
If $\lambda$ is a non continuous eigenvalue, then $\rb_{\mu}$ defined in Corollary \ref{cor:alternative} is equal to $d$. 
In addition, since $\rb_{\mu}=d$, the partition of Corollary \ref{cor:condwithpartition} is made of singletons and we get the property \eqref{eq:coro7} for any $t_{2} \in I_{\mu}$. But, using statement (1) of Corollary \ref{cor:alternative}, one deduces that $I_{\mu}=\{1,\ldots,d\}$. Thus property \eqref{eq:coro7} is true for any $t_{2} \in \{1,\ldots,d\}$.
Clearly, property \eqref{eq:coro7} implies that $\lambda$ is a non continuous eigenvalue  by Corollary 
\ref{cor:condwithpartition}. 

Now, assume $\lambda$ is a non continuous eigenvalue. Using Corollary \ref{cor:alternative} (3) one gets that 
$\mathcal{M}_{erg}(X,T)$ has a unique element, so the system is uniquely ergodic. This proves statement \eqref{uergodic}.

Finally we prove statement \eqref {theo:principal1}. Recall that under our hypothesis equivalent conditions of Lemma \ref{lemm:suma_matriz_coef_equivalentes} hold for any $t_{2} \in \{1,\ldots,d\}$. 
Then, from the equality
$$
\left|\mu(\tau_m=t_{1})-\frac{1}{d}\right|=\abs{\sum_{t_{2}\in V_{n}}\left(\frac{\matrizp{m}{n}{t_{1}}{t_{2}}}{q_{m,n}}-\frac{1}{d}\right)\mu(\tau_n=t_{2})},
$$
\eqref{eq:coro7} and Lemma \ref{lemm:suma_matriz_coef_equivalentes} (2) one gets
\begin{equation*}\label{eq:limite_medidas_1sobred}
\lim_{m\to\infty}\mu(\tau_m=t_{1})=\frac{1}{d}.
\end{equation*}
This proves the desired statement.

\section{Examples}
\label{examples}

\subsection{Example 1: A model example}
We start with a basic \emph{model example} that will be used later to illustrate several behaviors of the eigenvalues with 
respect to the ergodic measures. We start with a general framework to construct a family of examples where the Bratteli-Vershik representations are not necessarily proper. Later we modify this family to obtain proper representations. Finally, we prove that in this family of examples all ergodic measures share the same non continuous eigenvalue $\exp(2i\pi/6)$. 

\subsubsection{}
Define the sequence $q_1=1$, $q_2=2\cdot 5^{2}$ and $q_n=5^{2n}$ for $n>2$. 
First, consider the (non necessarily proper) Toeplitz diagram with the characteristic sequence $(q_n; n\in \NN)$
such that $V_n=\{1,2,3,4,5,6,7\}$ for all $n\geq 1$ and the local order of the $q_{n+1}$ arrows arriving at $t \in V_{n+1}$ is given by the following associated sequences of vertices in $V_{n}$:
$$
t \rightarrow v_{n+1}(t) \; \; \textrm{for all} \; 1\leq t\leq 7,
$$
where each $v_{n+1}(t)$ is a fixed word of length $q_{n+1}$ on the alphabet $V_n$ built in the following way:

(1) Set $W_1=\{1,4,7\}$, $W_2=\{2,5\}$ and $W_3=\{3,6\}$.

(2) For $n\geq 2$ the words $v_{n+1}(1), v_{n+1}(4)$ and $v_{n+1}(7)$ begin with an element of $W_1$, followed by an element
of $W_2$ and this is followed by an element of $W_{3}$. Then we restart from $W_{1}$ and so on. 
Because $q_{n+1}\equiv_{3}1$ for $n \geq 2$, all these three words end with an element of
$W_1$. The words $v_{n+1}(2)$ and $v_{n+1}(5)$ follow the same periodic scheme starting with an element of $W_2$, then of $W_{3}$ and so on (and therefore ending with an
element of $W_2$). And finally the words $v_{n+1}(3)$ and $v_{n+1}(6)$ follow the periodic scheme starting in $W_3$.

(3) Level 2 is built in any way.
\medskip

Define $k: \{1,\ldots,7\} \times \{1,\ldots,7\} \to \{0,1,2\}$ by:  
$k(t_{1},t_{2})=j-i \mod 3$ if $t_{1} \in W_{i}$ and $t_{2} \in W_{j}$.
The following two properties are straightforward:
\smallskip

- For $t_1,t_2,t_3 \in \{1,\ldots,7\}$ we have
\begin{equation}
\label{lem:formula1}
k(t_1,t_3)\equiv_{3}k(t_1,t_2)  + k(t_2,t_3).
\end{equation}
- Let $x$ be an infinite sequence in the ordered Bratteli diagram. For $n\geq 2$, 
\begin{equation}
\label{lem:formula2}
\overline{s}_n(x)\equiv_{3}k(\tau_{n}(x),\tau_{n+1}(x)).
\end{equation}

\subsubsection{} Now we modify a little bit the previously defined local orders to get a proper Bratteli-Vershik representation for the system. To produce the new orders we change sequences 
$v_{n+1}(t)$ into $w_{n+1}(t)$ in such a way that: (1) $w_{n+1}(t)=v_{n+1}(t)$, except for at most a fixed number of letters, say $L$, independent of $n$; (2)  $w_{n+1}(t)$ begins and ends with 1; and (3) $w_{n+1}(t)$ contains every element of $\{1,2,3,4,5,6,7\}$ at least once. This diagram is clearly proper and induces a Toeplitz system of finite rank $(X,T)$. 

Consider any invariant measure $\mu$ on the system. We prove that $\exp(2\pi i/6)$ is a non continuous eigenvalue of $(X,T)$ for $\mu$, and this fact is independent of the measure $\mu$ we choose. In order to do that, we verify conditions (1) to (4) of Corollary \ref{coro:cocycle}. 

By construction $p_{n}\equiv_{6} 2$ and $\rb=6/(6,p_{n})=3$ for all $n\geq 2$, so conditions (1) and (2) hold. 
Condition (3) follows directly from \eqref{lem:formula1}. To prove condition (4) we 
need to find a set of full measure where $\overline{s}_n(x)\equiv_{3}k(\tau_{n}(x),\tau_{n+1}(x))$ 
for all large enough $n\in \NN$. 

Similarly as in the proof of Theorem \ref{theo:principal}, for $n\in \NN$ consider
$
\mathcal{C}_{n}=\{x\in X\talque \overline{s}_{n}(x)\not \equiv_{3} k(\tau_{n}(x),\tau_{n+1}(x)) \}.
$
Since \eqref{lem:formula2} holds before modifying the orders 
and those modifications alter no more than $L$ letters per level, we easily check that
\begin{align*}
\mu(\mathcal{C}_{n})&=
\sum_{t_{1}=1}^{7}\sum_{t_{2}=1}^{7}\mu(\tau_{n}=t_{1},\tau_{n+1}=t_{2}, \overline{s}_{n}(x)\not \equiv_{3} k(t_{1},t_{2})) \\
&\leq \sum_{t_{1}=1}^{7}\sum_{t_{2}=1}^{7} L   h_{n}(t_{1}) \mu_{n+1}(t_{2}) \\
&\leq \sum_{t_{1}=1}^{7}\sum_{t_{2}=1}^{7} \frac{L}{q_{n+1}} \mu(\tau_{n+1}=t_{2}) \\
&\leq \frac{7L}{q_{n+1}}.
\end{align*}
So,
$\sum_{n\geq 1}\mu(\mathcal{C}_{n})$ converges.
Hence, from the Borel-Cantelli Lemma we get $\mu(\mathcal{C})=1$, where $\displaystyle \mathcal{C}=\liminf_{n\to \infty} \mathcal{C}_n^{c}$.

\subsection{Example 2: A first particular case of the model example} In this example we precise the construction of Example 1
in order to show that the model example can produce a uniquely ergodic system, where $\exp(2i\pi /6)$ is a non continuous eigenvalue for the unique invariant measure. In addition, this will illustrate that inequalities in Corollary \ref{cor:alternative} can be strict and that Corollary \ref{coro:ue} is not reversible since one can have $\rb_{\mu}<d$ in the uniquely ergodic case. 

First, for $n\geq 3$ define $c_n$ such that $q_n=12c_n+1$ and define words giving the order of the diagram by
\begin{align*}
w_{n+1}(1) &= (123456723756)^{c_{n+1}}1 \\ 
w_{n+1}(2) &= 1(312645372675)^{c_{n+1}-1}(312)^{3}671 \\
w_{n+1}(3) &= 1(123456723756)^{c_{n+1}-1}(123)^{3}451 \\ 
w_{n+1}(4) &= (156423756723)^{c_{n+1}}1 \\
w_{n+1}(5) &= 1(345612375672)^{c_{n+1}-1}(645)^{3}311 \\ 
w_{n+1}(6) &= 1(156423756723)^{c_{n+1}-1}(723)^{3}121 \\
w_{n+1}(7) &= (153426753726)^{c_{n+1}}1 
\end{align*}
It is straightforward that these orders fit the model construction in Example 1. Also, for any invariant measure $\mu$, the system satisfies: $\mu(\tau_{n}=t)\xrightarrow[n\to\infty]{}1/6$ for $t=2,3,5,6,7$ and 
$\mu(\tau_{n}=t)\xrightarrow[n\to\infty]{}1/12$ for $t=1,4$. The proof is a simple computation. For example, 
$$
\frac{c_{n+1}+1}{q_{n+1}}<\mu(\tau_n=1)<\frac{c_{n+1}+4}{q_{n+1}},
$$
and then we use that $c_{n+1}/q_{n+1}\xrightarrow[n\to\infty]{}1/12$. Since $\#I_{\mu}=7$, then we deduce that the
system is uniquely ergodic. Also, since $3$ divides $\rb_{\mu}$, then $\rb_{\mu}< \# I_{\mu}$. 

\subsection{Example 3: A second particular case of the model example} Here we will use the model example to produce a Bratteli-Vershik system having exactly two ergodic measures. Then, for each one $\exp(2i\pi/6)$ is a non continuous eigenvalue. 
Let us take in the model example the following particular choice of $w_{n+1}(t)$ for $t\in \{1,\ldots, 7\}$ 
and $n\geq 2$. First define $c_n$ so that $q_n=3c_n+1$, and then set:
\begin{align*}
w_{n+1}(1) &= (123)^{c_{n+1}-2}4567231 \\
w_{n+1}(2) &= 13(123)^{c_{n+1}-2}45671 \\
w_{n+1}(3) &= 1(123)^{c_{n+1}-2}456721 \\
w_{n+1}(4) &= 146(456)^{c_{n+1}-2}7231 \\
w_{n+1}(5) &= 14(456)^{c_{n+1}-2}12371 \\
w_{n+1}(6) &= 1(456)^{c_{n+1}-2}123761 \\
w_{n+1}(7) &= 156(456)^{c_{n+1}-2}7231
\end{align*}

As was shown in Theorem 3.3 (2) of \cite{bkms}, any ergodic measure is obtained as an extension of a finite measure on a
system defined on a \emph{subdiagram}. A subdiagram is obtained fixing subsets of vertices 
at each level and considering only the paths which go along the vertices in such subsets. The order is defined naturally following the order of the complete diagram. Here we will fix a unique subset of $\{1,\ldots,7\}$ for all levels. 

Consider the subset $A_{1}=\set{1,2,3}$ and construct the associated subdiagram. 
Using the same nomenclature as before, for levels $n\geq 2$ the corresponding subdiagram has the following induced local orders:
\begin{align*}
 1 &\rightarrow (123)^{c_{n+1}-2}231 \\
 2 &\rightarrow 13(123)^{c_{n+1}-2}1 \\
 3 &\rightarrow 1(123)^{c_{n+1}-2}21
\end{align*}
This order determines a proper diagram that is of Toeplitz type and has the characteristic sequence 
$(\overline{q}_n)_{n\in\NN}$, with $\overline{q}_n=q_n-4$ for $n>2$. Analogously to Example 2, 
we can see that the system $(Y,S)$ induced by this diagram is uniquely ergodic. Moreover, the unique invariant measure $\mu$ of this system can be naturally extended to a finite ergodic measure of $(X,T)$. For a deeper discussion of this extension we refer the reader to \cite{bkms} Section 3. Let us call $\widehat{\mu}$ the normalized extension of $\mu$. Then $\widehat{\mu}$ is an ergodic probability measure on $(X,T)$.

Analogously, consider $A_{2}=\set{4,5,6}$. In this case the corresponding subdiagram has the following local orders. For $n\geq  2$,
\begin{align*}
4 &\rightarrow 46(456)^{c_{n+1}-2} \\
5 &\rightarrow 4(456)^{c_{n+1}-2} \\
6 &\rightarrow (456)^{c_{n+1}-2}6
\end{align*}
This diagram has unique maximal and minimal paths, the words have lengths 
$q_{n+1}-5$, $q_{n+1}-6$ and $q_{n+1}-6$ respectively. As before, one proves that the system $(Z,R)$ associated to this diagram is uniquely ergodic and that the unique ergodic measure 
$\nu$ can be extended to a finite ergodic measure of $(X,T)$. We call $\widehat{\nu}$ the normalized extension of $\nu$.

From Theorem 3.3 (4) in \cite{bkms} one deduces that $(X,T)$ has no other ergodic probability measures than 
$\widehat{\mu}$ and $\widehat{\nu}$. Furthermore, one proves by simple computations that the diagram is clean and $I_{\widehat{\mu}}=\set{1,2,3}$ and  $I_{\widehat{\nu}}=\set{4,5,6}$.

\subsection{Example 4: A small variation of the model example} We provide an example of a finite rank Toeplitz system with two ergodic measures. For one there is a non continuous eigenvalue, while for the other all eigenvalues are continuous. We keep the values for $q_{n}$ of Example 1 but 
we consider the following choice of $w_{n+1}(t)$ for $t\in \{1,\ldots, 7\}$ and $n\geq 2$, where
$c_n$ is such that $q_n=12c_n+1$:
\begin{align*}
w_{n+1}(1) &= (123456423156)^{c_{n+1}-1}(123)^{3}7561 \\
w_{n+1}(2) &= 1(312645342615)^{c_{n+1}-1}(312)^{3}671 \\
w_{n+1}(3) &= 1(123456423156)^{c_{n+1}-2}(123)^{3}751 \\
w_{n+1}(4) &= (156423456123)^{c_{n+1}-1}(123)^{3}7561 \\
w_{n+1}(5) &= 1(345612315642)^{c_{n+1}-1}(645)^{3}371 \\
w_{n+1}(6) &= 1(156423456123)^{c_{n+1}-2}(123)^{3}721 \\
w_{n+1}(7) &= 1(7)^{q_{n+1}-7}654321
\end{align*}
This order does not fit conditions of Example 1, so we cannot ensure that $\exp(2i\pi/6)$ is a non continuous eigenvalue for every ergodic measure $\mu$ on the system $(X,T)$ induced by this diagram.

As in the previous example one proves that the subdiagrams associated to the subsets of vertices $\set{1,2,3,4,5,6}$ and $\set{7}$ at all levels define systems $(Y,S)$ and $(Z,R)$ respectively, which are uniquely ergodic and the normalized extensions of their unique probability measures, $\widehat{\mu}$ and $\widehat{\nu}$, are ergodic measures on $(X,T)$. Furthermore, a detailed computation allows to prove that the diagram is clean with respect to these measures and that $I_{\widehat{\mu}}=\set{1,2,3,4,5,6}$ and  $I_{\widehat{\nu}}=\set{7}$. This implies there is no other ergodic probability measure on $(X,T)$ aside from such extensions.

Now we prove that $\exp(2i\pi /6)$ is a non continuous eigenvalue for $\widehat{\mu}$ and  that $\widehat{\nu}$ does not have non continuous eigenvalues. The only difference between the model example and this case is the measure of the set
$$
\mathcal{C}_{n}=\{x\in X\talque \overline{s}_{n}(x)\not \equiv_{3} k(\tau_{n}(x),\tau_{n+1}(x))\}.
$$
Here, $\widehat{\mu}(\mathcal{C}_{n}) \leq \frac{2}{q_{n+1}}+ \widehat{\mu}(\tau_{n+1}=7)$ and a simple computation allows to prove that $\sum_{n\geq 1}\widehat{\mu}(\mathcal{C}_{n})$ converges. We deduce by using Corollary \ref{coro:cocycle} that $\exp(2i\pi /6)$ is a non continuous eigenvalue for $\widehat{\mu}$.

The absence of non continuous rational eigenvalues, say $\lambda=\exp(2i\pi/b)$, for $\widehat{\nu}$ follows from inequalities $1< b/(b,p_n) \leq  \#I_{\widehat{\nu}}=1$, which is a contradiction.

\subsection{Example 5: A big variation of the model example} Here we provide a Bratteli-Vershik system of Toeplitz type with rank 7 having two ergodic measures and different non continuous eigenvalues associated to them. The first eigenvalue is $\exp(2i\pi/6)$ and the corresponding $\rb=3$, and the other eigenvalue is $\exp(2i\pi /8)$ with $\rb=4$. In particular, this example shows that all inequalities of Corollary \ref{cor:alternative} (4) can be equalities. 
We keep the values for $q_{n}$ of Example 1 and for $t \in \{1,\ldots, 7\}$ and $n\geq 2$ we consider the following choice of $w_{n+1}(t)$, where $c_n$ is such that $q_n=12c_n+1$:
\begin{align*}
w_{n+1}(1) &= (123)^{4c_{n+1}-2}1245671 \\
w_{n+1}(2) &= 1(312)^{4c_{n+1}-2}345671 \\
w_{n+1}(3) &= 1(123)^{4c_{n+1}-2}145671 \\
w_{n+1}(4) &= 1(5674)^{3c_{n+1}-2}23745671 \\
w_{n+1}(5) &= 15(7456)^{3c_{n+1}-2}7452371 \\
w_{n+1}(6) &= 15(4567)^{3c_{n+1}-2}2367471 \\
w_{n+1}(7) &= 12(5674)^{3c_{n+1}-2}3674571
\end{align*}
As before, one proves that the subdiagrams associated to the sets $\set{1,2,3}$ and $\set{4,5,6,7}$ define systems $(Y,S)$ and $(Z,R)$ respectively which are unique\-ly
ergodic, and the extension of their unique probability measures are ergodic measures on $(X,T)$.
Denote the ergodic measures on $(X,T)$ by $\widehat{\mu}$ and $\widehat{\nu}$. One also has that the diagram is clean and 
$I_{\widehat{\mu}}=\set{1,2,3}$, $I_{\widehat{\nu}}=\set{4,5,6,7}$. Thus, there is no other ergodic probability measure 
on $(X,T)$ aside from $\widehat{\mu}$ and $\widehat{\nu}$.  

Now we sketch a proof that $\lambda=\exp(2i\pi/6)$ is a non continuous eigenvalue for $\widehat{\mu}$. Similarly one proves that $\lambda=\exp(2i\pi/8)$  is a non continuous eigenvalue for $\widehat{\nu}$. This last case is left to the reader.

First, a direct computation (one easily computes nine cases) serves to prove that for any 
$t_{1}, t_{2} \in \{1,2,3\}$, all, up to a bounded number of elements $s \in \bar S_{n}(t_{1},t_{2})$, are constant modulo $3$. 
Denote $k(t_{1},t_{2})$ such a constant. Moreover, if $k(t_{1},1)\equiv_{3} c $ then $k(t_{1},2)\equiv_{3} c+1$ and 
$k(t_{1},3)\equiv_{3}c+2$; and if $k(1,t_{2})\equiv_{3} c'$ then $k(2,t_{2})\equiv_{3}c'+2$ and 
$k(3,t_{2})\equiv_{3}c'+1$. A precise inspection of values of $c$ and $c'$ for all $t_{1}$ and $t_{2}$ allows to prove: 
$$k(t_{1},t_{2})\equiv_{3}k(t_{1},t)+k(t,t_{2}) \text{ for any } t\in \{1,2,3\}.$$
This additive map is the one required by Corollary \ref{coro:cocycle}.
To finish the proof it is enough to produce a set $\mathcal C$ of full measure such that 
for any point  $x \in \mathcal C$ one has 
$\bar s_{n}(x)\equiv_{3}k(\tau_{n}(x),\tau_{n+1}(x))$ for all enough large $n\in \NN$.
As before, by considering for any $n\in \NN$ the set
$$\mathcal C_{n}=\{x \in X; \bar s_{n}(x)\not\equiv_{3}k(\tau_{n}(x),\tau_{n+1}(x))\}$$
and using the fact that any $s \in \bar S_{n}(t_{1},t_{2})$ up to a bounded number of elements, say $L$,  is constant modulo $3$, one gets that $\hat \mu(\mathcal{C}_{n})\leq 3L/q_{n+1}$. We finish the proof of the claim 
by the Borel-Cantelli Lemma, taking 
$\displaystyle\mathcal{C}=\liminf_{n\to \infty} \mathcal{C}_n^{c}$.

\subsection{Example 6: Another (similar) big variation of the model example} Here we modify the previous example 
to provide a system with two ergodic measures and non continuous eigenvalues $\exp(2i\pi/6)$ and $\exp(2i\pi/4)$ respectively. This example shows that the first inequality of Corollary \ref{cor:alternative} (4) is an equality 
and the second is a strict inequality. For $t\in \{1,\ldots,7\}$ and $n\geq 2$, consider the following choice of 
$w_{n+1}(t)$ and write $q_n=12c_n+1$: 
\begin{align*}
w_{n+1}(1) &= (123)^{4c_{n+1}-2}1245671 \\
w_{n+1}(2) &= 1(312)^{4c_{n+1}-2}345671 \\
w_{n+1}(3) &= 1(123)^{4c_{n+1}-2}145671 \\
w_{n+1}(4) &= 1(647465)^{2c_{n+1}-1}237461 \\
w_{n+1}(5) &= 1(656574)^{2c_{n+1}-1}652361 \\
w_{n+1}(6) &= 16(646575)^{2c_{n+1}-1}72361 \\
w_{n+1}(7) &= 16(757564)^{2c_{n+1}-1}73261
\end{align*}
In this example the subdiagrams associated to $\set{1,2,3}$ and $\set{4,5,6,7}$ define systems $(Y,S)$ and $(Z,R)$ respectively which are uniquely ergodic, and the extensions of these ergodic measures, $\widehat{\mu}$ and $\widehat{\nu}$, are  ergodic probability measures in $(X,T)$. As in the previous example there is no other ergodic probability measure on $(X,T)$. Furthermore, the diagram is clean,  $I_{\widehat{\mu}}=\set{1,2,3}$ and  $I_{\widehat{\nu}}=\set{4,5,6,7}$.

In relation to eigenvalues, doing similar computations as in the previous example one gets that 
$\exp(2\pi i/6)$ is a non continuous eigenvalue for  $\widehat{\mu}$ and that $\exp(2\pi i/4)$ is a non continuous 
eigenvalue for $\widehat{\nu}$, while $\exp(2\pi i/8)$ is not.

{\small
\bibliography{biblio}{}

\providecommand{\bysame}{\leavevmode\hbox to3em{\hrulefill}\thinspace}
\providecommand{\MR}{\relax\ifhmode\unskip\space\fi MR }
\providecommand{\MRhref}[2]{%
  \href{http://www.ams.org/mathscinet-getitem?mr=#1}{#2}
}
\providecommand{\href}[2]{#2}
\begin{thebibliography}{CDHM03}

\bibitem[BDM05]{necesariasuficiente}
Xavier Bressaud, Fabien Durand, and Alejandro Maass, \emph{Necessary and
  sufficient conditions to be an eigenvalue for linearly recurrent dynamical
  {C}antor systems}, J. London Math. Soc. \textbf{72} (2005), 799--816.

\bibitem[BDM10]{rangofinito}
\bysame, \emph{On the eigenvalues of finite rank {B}ratteli--{V}ershik
  dynamical systems}, Ergod. Th. \& Dynam. Sys. \textbf{30} (2010), 639--664.

\bibitem[BKMS13]{bkms}
S.~Bezuglyi, J.~Kwiatkowski, K.~Medynets, and B.~Solomyak, \emph{Finite rank
  {B}ratteli diagrams: structure of invariant measures}, Trans. Amer. Math.
  Soc. \textbf{365} (2013), 2637--2679.

\bibitem[CDHM03]{lr}
Mar{\'{i}}a~Isabel Cortez, Fabien Durand, Bernard Host, and Alejandro Maass,
  \emph{Continuous and measurable eigenfunctions of linearly recurrent
  dynamical {C}antor systems}, J. London Math Soc. \textbf{67} (2003),
  790--804.

\bibitem[Dek78]{Dekking}
Frederik~Michel Dekking, \emph{The spectrum of dynamical systems arising from
  substitutions of constant length}, Z. Wahrscheinlichkeitstheorie und Verw.
  Gebiete \textbf{41} (1977/78), 221--239.

\bibitem[DHS99]{dhs}
Fabien Durand, Bernard Host, and Christian Skau, \emph{Substitutional dynamical
  systems, {B}ratteli diagrams and dimension groups}, Ergod. Th. \& Dynam. Sys.
  \textbf{19} (1999), 953--993.

\bibitem[DL96]{toeplitzpreset}
Tomasz Downarowicz and Yves Lacroix, \emph{A non-regular {T}oeplitz flow with
  preset pure point spectrum}, Studia Math. \textbf{120} (1996), 235--246.

\bibitem[DM08]{DM}
Tomasz Downarowicz and Alejandro Maass, \emph{Finite-rank {B}ratteli-{V}ershik
  diagrams are expansive}, Ergod. Th. \& Dynam. Sys. \textbf{28} (2008),
  739--747.

\bibitem[Dur00]{du1}
Fabien Durand, \emph{{L}inearly recurrent subshifts have a finite number of
  non-periodic subshift factors}, Ergod. Th. \& Dynam. Sys. \textbf{20} (2000),
  1061--1078.

\bibitem[Dur03]{du2}
\bysame, \emph{Corrigendum and addendum to `{}{L}inearly recurrent subshifts
  have a finite number of non-periodic subshift factors'}, Ergod. Th. \& Dynam.
  Sys. \textbf{23} (2003), 663--669.

\bibitem[Dur10]{review}
Fabien Durand, \emph{Combinatorics on {B}ratteli diagrams and dynamical
  systems}, Combinatorics, automata and number theory, Encyclopedia Math.
  Appl., vol. 135, Cambridge Univ. Press, Cambridge, 2010, pp.~324--372.

\bibitem[GJ00]{gjtoeplitz}
Richard Gjerde and {\O}rjan Johansen, \emph{{B}ratteliñ-{V}ershik models for
  {C}antor minimal systems: applications to {T}oeplitz flows}, Ergod. Th. \&
  Dynam. Sys. \textbf{20} (2000), 1687--1710.

\bibitem[Hos86]{Host}
Bernard Host, \emph{Valeurs propres des syst\`emes dynamiques d\'efinis par des
  substitutions de longueur variable}, Ergod. Th. \& Dynam. Sys. \textbf{6}
  (1986), 529--540.

\bibitem[HPS92]{hps}
Richard Herman, Ian Putnam, and Christian Skau, \emph{Ordered {B}ratteli
  diagrams, dimension groups and topological dynamics}, Int. J. Math.
  \textbf{3} (1992), 827--864.

\bibitem[Iwa96]{iwanik}
Anzelm Iwanik, \emph{Toeplitz flows with pure point spectrum}, Studia Math.
  \textbf{118} (1996), 27--35.

\bibitem[JK69]{Keane}
Konrad Jacobs and Michael Keane, \emph{{0-1}-sequences of {T}oeplitz type}, Z.
  Wahrscheinlichkeitstheorie und Verw. Gebiete \textbf{13} (1969), 123--131.

\bibitem[Wil84]{Williams}
Susan Williams, \emph{Toeplitz minimal flows which are not uniquely ergodic},
  Z. Wahrsch. Verw. Gebiete \textbf{67} (1984), 95--107.

\end{thebibliography}
\bibliographystyle{amsalpha}
}
\end{document}